\newtheorem{theorem}{Theorem}[section]
\newtheorem{lemma}{Lemma}[section]
\newtheorem{proposition}{Proposition}[section]
\newtheorem{definition}{Definition}[section]
\newtheorem{remark}{Remark}[section]
\numberwithin{equation}{section}
\numberwithin{definition}{section}
\def\be{\begin{equation}}
\def\ee{\end{equation}}
\def\p{\partial}
\def\disp{\displaystyle}
\newcommand{\JM}{Mierczy\'nski}
\newcommand{\const}{\ensuremath{\mathrm{const}}}
\newcommand{\RR}{\ensuremath{\mathbb{R}}}
\newcommand{\QQ}{\ensuremath{\mathbb{Q}}}
\newcommand{\ZZ}{\ensuremath{\mathbb{Z}}}
\newcommand{\NN}{\ensuremath{\mathbb{N}}}
\newcommand{\PP}{\ensuremath{\mathbb{P}}}
\newcommand{\lambdainf}{\ensuremath{\lambda_{\mathrm{inf}}}}
\newcommand{\lambdasup}{\ensuremath{\lambda_{\mathrm{sup}}}}
\newcommand{\OFP}{(\Omega,\mathcal{F},\PP)}
\newcommand{\Vptheta}{\ensuremath{V_{p}^{\theta}}}
\newcommand{\abs}[1]{\ensuremath{\lvert#1\rvert}}
\newcommand{\floor}[1]{\ensuremath{\lfloor#1\rfloor}}
\newcommand{\norm}[1]{\ensuremath{\lVert#1\rVert}}
\newcommand{\normVptheta}[1]{\ensuremath{\lVert#1\rVert_{\Vptheta}}}
\DeclareMathOperator{\cl}{cl}
\DeclareMathOperator{\spanned}{span}
\DeclareMathOperator{\nullspace}{\mathcal{N}}
\DeclareMathOperator{\supp}{supp}
\begin{document}

\title{Time averaging for nonautonomous/random linear parabolic equations
\footnote{This is a pre-copy-editing, author-produced PDF of an article accepted for publication in \emph{Discrete and Continuous Dynamical Systems Series B} following peer review. The definitive publisher-authenticated version \emph{Discrete and Continuous Dynamical Systems Series B} \textbf{9}(3/4) (2008), pp. 661--699, is available online at:  http://dx.doi.org/10.3934/dcdsb.2008.9.661}}

\author{Janusz Mierczy\'nski 
\thanks{Supported by the research funds for 2005--2008 (grant MENII 1 PO3A 021 29, Poland) and NSF grant INT-0341754.}
\\
Institute of Mathematics\\
Wroc{\l}aw University of Technology\\
Wybrze\.ze Wyspia\'nskiego 27\\
PL-50-370 Wroc{\l}aw\\
Poland\\
\\
and
\\
\\
Wenxian Shen 
\thanks{Partially supported by NSF grants DMS-0504166 and  INT-0341754.}
\\
Department of Mathematics\\
Auburn University\\
Auburn University, AL 36849\\
USA
}
\date{}

\maketitle

\begin{abstract}
Linear nonautonomous/random parabolic partial differential equations
are considered under the Dirichlet, Neumann or Robin boundary conditions, where both the zero order coefficients in the equation and the coefficients in the boundary conditions are allowed to depend on time.  The theory of the principal spectrum/principal Lyapunov exponents is shown to apply to those equations.  In the nonautonomous case, the main result states that the principal eigenvalue of any time-averaged equation is not larger than the supremum of the principal spectrum and that there is a time-averaged
equation whose principal eigenvalue is not larger than the infimum
of the principal spectrum.  In the random case, the main result
states that the principal eigenvalue of the time-averaged equation
is not larger than the principal Lyapunov exponent.
\end{abstract}

\noindent {\bf AMS Subject Classification.} Primary: 35K15, 35P15; Secondary: 35R60, 37H15

\noindent {\bf Key Words.} Nonautonomous linear partial differential equation of parabolic type, random linear partial differential equation of parabolic type, principal spectrum, principal Lyapunov exponent, averaging.

\section{Introduction}
\label{introduction}
It is well known that parabolic equations can be used to model many
evolution processes in science and engineering. Parabolic equations
with general time dependence  are gaining more and more attention
since they can take various time variations of the underlying
processes into account in modeling the processes. A great amount of
research work has been carried out toward the existence, uniqueness,
and regularity of solutions of general linear, semilinear,
quasilinear parabolic equations (see \cite{Ama}, \cite{Ama1},
\cite{Dan2}, \cite{DL}, \cite{Fri1}, \cite{LaSoUr}, \cite{Lie},
\cite{Yag}, etc.). As a basic tool for nonlinear problems, it is of
great significance to study the spectral theory for linear parabolic
equations.

Spectral theory, in particular, principal spectrum theory (i.e.,\
principal eigenvalues and principal eigenfunctions theory) for time
independent and time periodic parabolic equations is well understood
(see, for example, \cite{Hes}). For such an equation, its principal
eigenvalue provides the growth rate of the evolution operator and
hence a least upper bound of the growth rates of all the solutions.
Recently much effort has been devoted to the extension of principal
eigenvalue and principal eigenfunction theory of time independent and
periodic parabolic equations to general time dependent and random
parabolic equations. See, for example, \cite{Hu1}, \cite{Hu2},
\cite{HuPo}, \cite{HuPoSa1}, \cite{HuPoSa2}, \cite{HuShVi},
\cite{Mi1}, \cite{Mi2}, \cite{MiSh1}, \cite{Po}, \cite{PoTer},
\cite{ShVi}, etc.

In the current paper, we focus on time dependent parabolic equations
of the form
\begin{equation}
\label{nonauton-eq1}
\begin{cases}
\disp\frac{\p u}{\p t} = \sum_{i,j=1}^{N} a_{ij}(x)\frac{\p^{2}u}{\p
x_{i} \p x_{j}} + \sum_{i=1}^{N}
a_i(x)\frac{\p u}{\p x_i} + c(t,x)u, \quad & t > 0,\ x \in D, \\[1.5ex]
\mathcal{B}(t)u = 0, \quad & t > 0,\ x \in \p D,
\end{cases}
\end{equation}
where $D\subset \RR^N$,
\begin{equation*}
\mathcal{B}(t)u =
\begin{cases}
u & \text{(Dirichlet)}
\\[1.5ex]
\disp \sum_{i=1}^{N} b_i(x)\frac{\p u}{\p x_i} & \text{(Neumann)} \\[1.5ex]
\disp \sum_{i=1}^{N} b_i(x)\frac{\p u}{\p x_i} + d(t,x)u &
\text{(Robin)},
\end{cases}
\end{equation*}
and random parabolic equations of the form
\begin{equation}
\label{random-eq1}
\begin{cases}
\disp\frac{\p u}{\p t} = \sum_{i,j=1}^{N} a_{ij}(x)\frac{\p^{2}u}{\p
x_{i} \p x_{j}} + \sum_{i=1}^{N} a_i(x) \frac{\p u}{\p x_i} +
c(\theta_{t}\omega,x)u, \quad & t > 0,\ x \in D,
\\[1.5ex]
\mathcal{B}(\theta_t\omega)u = 0, \quad & t > 0,\ x \in \p D,
\end{cases}
\end{equation}
where
\begin{equation*}
\mathcal{B}(\theta_t\omega)u =
\begin{cases}
u & \text{(Dirichlet)}
\\[1.5ex]
\disp
\sum_{i=1}^{N} b_i(x) \frac{\p u}{\p x_i} & \text{(Neumann)} \\[1.5ex]
\disp \sum_{i=1}^{N} b_i(x) \frac{\p u}{\p x_i} +
d(\theta_{t}\omega,x)u & \text{(Robin)},
\end{cases}
\end{equation*}
and $(\OFP, \{\theta_t\}_{t\in\RR})$ is an ergodic metric dynamical
system (see Section 2 for definition).

Our objective is to study the influence of time variations of the
zeroth order terms on the so-called principal spectrum and principal
Lyapunov exponent of \eqref{nonauton-eq1} and \eqref{random-eq1}
(which are analogs of principal eigenvalue of time independent and
periodic parabolic equations), respectively. To do so,
 we  first study the existence and uniqueness of globally positive
solutions via the skew-product semiflows on (a subspace of)
$C^1(\bar{D})$ generated by \eqref{nonauton-eq1} and
\eqref{random-eq1}.  Next we define the principal spectrum and
principal Lyapunov exponent of \eqref{nonauton-eq1} and
\eqref{random-eq1} in terms of the globally positive solutions. We
then compare the principal spectrum and principal Lyapunov exponent
of \eqref{nonauton-eq1} and \eqref{random-eq1} with those of their
averaged equations.

To be  more precise, we first introduce some notations and state some
basic assumptions.

In the Dirichlet or Neumann type boundary conditions we assume
$d(\cdot,\cdot) \equiv 0$.  In the case of~\eqref{random-eq1} we
write $c^{\omega}(t,x)$ for $c(\theta_{t}\omega,x)$, $\omega \in
\Omega$, $t \in \RR$, $x \in \bar{D}$, and $d^{\omega}(t,x)$ for
$d(\theta_{t}\omega,x)$, $\omega \in \Omega$, $t \in \RR$, $x \in \p
D$.

For $m_1, m_2 \in \NN \cup \{0\}$ and $\beta \in [0,1)$ the symbol
$C^{m_1+\beta, m_2+\beta}(\RR \times \bar{D})$ denotes the Banach
space consisting of functions $h \colon \RR \times \bar{D} \to \RR$
whose mixed derivatives of order up~to $m_1$ in $t$ and up~to $m_2$
in $x$ are bounded, and whose mixed derivatives of order $m_1$ in $t$
and $m_2$ in $x$ are globally H\"older continuous with exponent
$\beta$, uniformly in $(t,x) \in \RR \times \bar{D}$ (provided that
the boundary $\p D$ of $D$ is of class $C^{m_2+\beta}$, at~least).

Similarly, for $m_1, m_2 \in \NN \cup \{0\}$ and $\beta \in [0,1)$
the symbol $C^{m_1+\beta, m_2+\beta}(\RR \times \p D)$ denotes the
Banach space consisting of functions $h \colon \RR \times \p D \to
\RR$ whose mixed derivatives of order up~to $m_1$ in $t$ and up~to
$m_2$ in $x$ are bounded, and whose mixed derivatives of order $m_1$
in $t$ and $m_2$ in $x$ are globally H\"older continuous with
exponent $\beta$, uniformly in $(t,x) \in \RR \times \p D$ (provided
that $\p D$ is of class $C^{m_2+\beta}$, at~least).

Throughout the paper, we assume the following smoothness conditions
on the domain and the coefficients in \eqref{nonauton-eq1} and
\eqref{random-eq1} (the nonsmooth case will be considered in the
monograph \cite{MiSh2}).

\begin{itemize}
\item[(A1)]
$D \subset \RR^{N}$ is a bounded domain, with boundary $\p D$ of
class $C^{3+\alpha}$, for some $\alpha > 0$.
\item[(A2)]
The functions $a_{ij}$, $a_{i}$ belong to $C^{2}(\bar{D})$ and
the functions $b_i$ belong to $C^2(\p D)$.
\item[(A3)]
\begin{itemize}
\item[(a)]
$c \in C^{2+\alpha,1+\alpha}(\RR \times \bar{D})$ (in the
case of~\eqref{nonauton-eq1}),
\item[(b)]
$c^{\omega} \in C^{2+\alpha,1+\alpha}(\RR \times \bar{D})$
for all $\omega \in \Omega$, with the
$C^{2+\alpha,1+\alpha}(\RR \times \bar{D})$-norm bounded
uniformly in $\omega \in \Omega$ (in the case
of~\eqref{random-eq1}).
\end{itemize}
\item[(A4)]
\begin{itemize}
\item[(a)]
$d \in C^{2+\alpha,3+\alpha}(\RR \times \p D)$ (in the case
of~\eqref{nonauton-eq1}),
\item[(b)]
$d^{\omega} \in C^{2+\alpha,3+\alpha}(\RR \times \p D)$ for
all $\omega \in \Omega$, with the $C^{2+\alpha,3+\alpha}(\RR
\times \p D)$-norm bounded uniformly in $\omega \in \Omega$
(in the case of~\eqref{random-eq1}).
\end{itemize}
\end{itemize}

We also assume the following uniform ellipticity condition and the
complementing boundary condition:
\begin{itemize}
\item[(A5)]
$a_{ij}(x) = a_{ji}(x)$  for $i,j = 1,2,\dots,N$ and $x \in \bar
D$, and there is $\alpha_0 > 0$ such that
\begin{equation*}
\sum_{i,j=1}^{N} a_{ij}(x)\,\xi_i\,\xi_j \ge \alpha_{0}
\sum_{i=1}^{N}\xi_i^2, \quad x \in \bar{D}, \ \xi \in \RR^{N}.
\end{equation*}
\item[(A6)]
There is $\alpha_1 > 0$ such that
\begin{equation*}
\sum_{i=1}^{N} b_{i}(x)\nu_{i}(x) \ge \alpha_{1}, \quad x \in \p D,
\end{equation*}
where $\nu(x) = (\nu_1(x),\nu_2(x),\cdots, \nu_N(x))$ is the unit
outer normal vector of $\p D$ at $x \in \p D$.
\end{itemize}

In the case of~\eqref{nonauton-eq1} let
\begin{equation}
\label{Y(a)-def}
Y(c,d) := \cl\{\,(c,d) \cdot t:t \in \RR\,\},
\end{equation}
be equipped with the open-compact topology, where $((c, d) \cdot
t)(s,x):= (c(s+t,x),d(s+t,x))$, $s \in \RR$, $x \in \p D$, and the
closure is taken in the open-compact topology of $\RR\times \bar D$.

In the case of~\eqref{random-eq1} let
\begin{equation}
\label{Y(omega)-def}
Y(\Omega) := \cl\{\,(c^{\omega},d^{\omega}):\omega \in \Omega\,\}
\end{equation}
be equipped with the open-compact topology, where  the closure is
also taken in the open-compact topology.  We will write $Y$ instead
of $Y(c,d)$ (for the case of \eqref{nonauton-eq1}) or instead of
$Y(\Omega)$ (for the case of \eqref{random-eq1}).

For given $(\tilde c,\tilde d) \in Y$ and $u_0 \in L_p(D)$, consider
\begin{equation}
\label{nonauton-eq2}
\begin{cases}
\disp\frac{\p u}{\p t} = \sum_{i,j=1}^{N} a_{ij}(x)\frac{\p^{2}u}{\p
x_{i} \p x_{j}} + \sum_{i=1}^{N}
a_i(x)\frac{\p u}{\p x_i} + \tilde c(t,x)u, \quad & t > 0,\ x \in D, \\[1.5ex]
\tilde{\mathcal{B}}(t)u = 0, \quad & t > 0,\ x \in \p D,
\end{cases}
\end{equation}
where
\begin{equation*}
\tilde{\mathcal{B}}(t)u =
\begin{cases}
u & \text{(Dirichlet)}
\\[1.5ex]
\disp \sum_{i=1}^{N} b_i(x)\frac{\p u}{\p x_i} & \text{(Neumann)} \\[1.5ex]
\disp \sum_{i=1}^{N} b_i(x)\frac{\p u}{\p x_i} + \tilde d(t,x)u &
\text{(Robin)},
\end{cases}
\end{equation*}
with the initial condition
\begin{equation}
\label{initial-condition}
u(0,x) = u_0(x), \quad x \in D.
\end{equation}
Applying the theory presented by H. Amann in~\cite{Ama}, we have that
\eqref{nonauton-eq2}+\eqref{initial-condition} has a unique {\em
$L_{p}(D)$-solution\/} $U_{(\tilde{c},\tilde{d}),p}(\cdot,0)u_0
\colon [0,\infty) \to L_{p}(D)$  ($p > 1$) (see Proposition
\ref{existence-prop}).

Note that $U_{(\tilde{c},\tilde{d}),p}(\cdot,0)u_0$ is also a
classical solution of \eqref{nonauton-eq2}+\eqref{initial-condition}
(see Section~\ref{skew-product} for more detail).  We may therefore
write $U_{(\tilde{c},\tilde{d}),p}(t,0)u_0$ as
$U_{(\tilde{c},\tilde{d})}(t,0)u_0$ for $u_0 \in L_p(D)$.  In the
present paper we further assume the following continuous dependence.

\medskip

\begin{itemize}
\item[(A7)]
For any $T > 0$ the mapping
\begin{equation*}
[\, Y \ni (\tilde{c},\tilde{d}) \mapsto [\,[0,T] \ni t \mapsto
U_{(\tilde{c},\tilde{d})}(t,0)\,] \in
B([0,T],\mathcal{L}(L_2(D),L_2(D))) \,]
\end{equation*}
is continuous, where $\mathcal{L}(L_2(D),L_2(D))$ represents the
space of all bounded linear operators from $L_2(D)$ into itself,
endowed with the norm topology, and $B(\cdot,\cdot)$ stands for
the Banach space of bounded functions, endowed with the supremum
norm.
\end{itemize}

\medskip
It should be pointed out that in~\cite{Ama2} and \cite{Pen}
conditions, for some special cases (for example, the Dirichlet
boundary condition case and the case with infinitely differentiable
coefficients), are given that guarantee the continuous dependence of
$[\,[0,T] \ni t \mapsto U_{(\tilde{c},\tilde{d})}(t,0)\,] \in
B([0,T],\mathcal{L}(L_2(D),L_2(D)))$ on the coefficients. For the
general case, the continuous dependence of $[\,[0,T] \ni t \mapsto
U_{(\tilde{c},\tilde{d})}(t,0)\,] \in
B([0,T],\mathcal{L}(L_2(D),L_2(D)))$ on the coefficients is not
covered in \cite{Ama2} and \cite{Pen}. We will not investigate the
conditions under which (A7) is satisfied in this paper.

\medskip
Then \eqref{nonauton-eq1} (\eqref{random-eq1}) generates the
following  skew-product semiflow (see Section~\ref{skew-product} for
detail)
\begin{equation*}
\Pi_t \colon X \times Y \to X \times Y,
\end{equation*}
\begin{equation*}
\Pi_t(u_0,(\tilde{c},\tilde{d})) =
(U_{(\tilde{c},\tilde{d})}(t,0)u_0,(\tilde{c},\tilde{d}) \cdot t),
\end{equation*}
where
\begin{equation}
\label{X-eq}
X :=
\begin{cases}
\overset{\circ}{C^{1}}(\bar{D}) & \quad \text{(Dirichlet)} \\[2ex]
C^{1}(\bar{D}) & \quad \text{(Neumann or Robin),}
\end{cases}
\end{equation}
\begin{equation*}
\overset{\circ}{C^{1}}(\bar{D}) := \{\,u \in C^{1}(\bar{D}): u(x) =
0 \text{ for each } x \in \p D\,\}.
\end{equation*}
\medskip

Throughout the paper, we denote $\norm{\cdot}$ as the norm in
$L_2(D)$ (see Section 2 for other notations).

\medskip
Among others, we prove
{\em
\begin{itemize}
\item[1)]
$\Pi_t$ is strongly monotone \textup{(}see Theorem
\ref{strong-positivity-class}\textup{)}.
\item[2)]
\eqref{nonauton-eq2} has a unique \textup{(}up~to multiplication
by positive scalars\textup{)} globally positive solution
$v(t,x;\tilde{c},\tilde{d})$ \textup{(}which is an analog of a
principal eigenfunction\textup{)} \textup{(}see Theorem
\ref{globally-positive-existence}\textup{)} \textup{(}we denote
$v((\tilde{c},\tilde{d}))(\cdot)$ as
$v(0,\cdot;\tilde{c},\tilde{d})/
\norm{v(0,\cdot;\tilde{c},\tilde{d})}$\textup{)}.
\item[3)]
Consider \eqref{nonauton-eq1}. Then the set $\Sigma(c,d)$
consisting of all limits
\begin{equation}
\label{nonauton-principal-spectrum-eq0}
\lim\limits_{n\to\infty} \frac{\ln\norm{U_{(c,d) \cdot
S_n}(T_n-S_n,0)v((c,d) \cdot S_n)}} {T_n-S_n} \end{equation} where
$T_n - S_n \to \infty$ as $n \to \infty$,  is a compact interval
\textup{(}see Theorem \ref{principal-spectrum-thm2}\textup{)}.
\item[4)]
Consider \eqref{random-eq1}. Then for a.e. $\omega \in \Omega$
\begin{equation}
\label{random-lyapunov-exponent-eq0}
\lim\limits_{T\to\infty}\frac{\ln\norm{U_\omega(T,0)v(\omega)}}{T} =
\const
\end{equation}
where $U_\omega (t,0) = U_{(c^\omega,d^\omega)}(t,0)$ and
$v(\omega) = v((c^\omega,d^\omega))$ \textup{(}see Theorem
\ref{principal-exponent-thm}\textup{)}.
\end{itemize}
}

\medskip
Denote the compact interval in 3) by $[\lambdainf(c,d),
\lambdasup(c,d)]$ and  the constant in 4) by $\lambda(c,d)$. We call
$[\lambdainf(c,d),\lambdasup(c,d)]$ the {\em principal spectrum\/} of
\eqref{nonauton-eq1} (see Definition \ref{principal-spectrum-def})
and call $\lambda(c,d)$ the {\em principal Lyapunov exponent\/} of
\eqref{random-eq1} (see Definition \ref{principal-exponent-def}).

Observe that if $c(t,x)$ and $d(t,x)$ in \eqref{nonauton-eq1} are
independent of $t$ or are periodic in $t$, then $\lambdainf(c,d)( =
\lambdasup(c,d))$ is the principal eigenvalue of \eqref{nonauton-eq1}
and $v(t,\cdot;c,d)$ is an eigenfunction associated with
$\lambdainf(c,d)$ (called a principal eigenfunction).  As in the time
independent and periodic cases, the principal spectrum of
\eqref{nonauton-eq1} and principal Lyapunov exponent of
\eqref{random-eq1} provide upper bounds of growth rates of the
solutions of \eqref{nonauton-eq1} and \eqref{random-eq1},
respectively.  This can indeed  be easily seen from the fact that
\begin{align*}
\lim_{n\to\infty} \frac{\ln\norm{U_{(c,d) \cdot
S_n}(T_n-S_n,0)v((c,d) \cdot S_n)}} {T_n-S_n} & =
\lim_{n\to\infty}\frac{\ln\norm{U_{(c,d)\cdot
S_n}(T_n-S_n,0)}}{T_n-S_n} \\
& = \lim_{n\to \infty}\frac{\ln\norm{U_{(c,d)\cdot
S_n}(T_n-S_n,0)u_0}}{T_n-S_n}
\end{align*}
for any nontrivial $u_0\in X$ with $u_0(x) \ge 0$ for $x \in D$ as
long as the limits exist (the existence of one of the limits implies
the existence of the others), and
\begin{align*}
\lim_{T\to\infty} \frac{\ln\norm{U_\omega(T,0)v(\omega)}}{T} & =
\lim_{T\to\infty} \frac{\ln\norm{U_{\omega}(T,0)}}{T} \\
& = \lim_{T\to\infty} \frac{\ln\norm{U_{\omega}(T,0)u_0}}{T}
\end{align*}
for any nontrivial $u_0 \in X$ with $u_0(x) \ge 0$ for $x \in D$ as
long as the limits exist (again the existence of one of the limits
implies the existence of the others) (this fact follows from Theorem
\ref{globally-positive-existence}).

We remark that  the existence and uniqueness of globally positive
solutions to nonautonomous parabolic equations with time independent
boundary conditions were studied in \cite{Mi1}, \cite{Mi2},
\cite{Po}.  In \cite{Hu1} the author studied the uniqueness of
globally positive solutions to nonautonomous parabolic equations with
time dependent boundary conditions.  When the boundary conditions are
time independent, the results 3) and 4) are proved in \cite{MiSh1}.
The results 3), 4), and the existence part of 2) for time dependent
boundary conditions are new.  The strong monotonicity result 1)
basically follows from \cite[Theorem 11.6]{Ama3} and strongly maximum
principal and the Hopf boundary point principle for classical
solutions of parabolic equations.

\medskip
We now consider the averaged equations  of \eqref{nonauton-eq1} and
\eqref{random-eq1} in the following sense:

In the case of~\eqref{nonauton-eq1} we call $(\hat{c}(\cdot),
\hat{d}(\cdot))$ an {\em averaged function\/} of $(c,d)$ if
\begin{equation*}
\hat{c}(x) = \lim_{n\to\infty}\frac{1}{T_n-S_n}
\int_{S_n}^{T_n}c(t,x)\,dt \quad\text{for } x \in D
\end{equation*}
and
\begin{equation*}
\hat{d}(x) =
\lim_{n\to\infty}\frac{1}{T_n-S_n}\int_{S_n}^{T_n}d(t,x)\,dt
\quad\text{for } x \in \p D
\end{equation*}
for some $T_n - S_n \to \infty$, where the limit is uniform in $x \in
\bar{D}$ (resp. in $x \in \p D$).

In the case of~\eqref{random-eq1} we call $(\hat{c}(\cdot),
\hat{d}(\cdot))$ the {\em averaged function\/} of $(c,d)$ if
\begin{equation*}
\hat{c}(x) = \int_\Omega c(\omega,x)\,d\PP(\omega) \quad \text{for }
x \in D
\end{equation*}
and
\begin{equation*}
\hat{d}(x) = \int_\Omega d(\omega,x)\,d\PP(\omega) \quad \text{for }
x \in \p D.
\end{equation*}

The equation
\begin{equation}
\label{nonauton-or-random-avg}
\begin{cases}
\disp\frac{\p u}{\p t} = \sum_{i,j=1}^{N} a_{ij}(x)\frac{\p^{2}u}{\p
x_{i} \p x_{j}} + \sum_{i=1}^{N}
a_i(x)\frac{\p u}{\p x_i} + \hat{c}(x)u, \quad & x \in D, \\[1.5ex]
\hat{\mathcal{B}}u = 0, \quad & x \in \p D,
\end{cases}
\end{equation}
where
\begin{equation*}
\hat{\mathcal{B}}u =
\begin{cases}
u & \text{(Dirichlet)}
\\[1.5ex]
\disp \sum_{i=1}^{N} b_i(x)\frac{\p u}{\p x_i} & \text{(Neumann)}\\[1.5ex]
\disp \sum_{i=1}^{N} b_i(x)\frac{\p u}{\p x_i}+ \hat{d}(x)u &
\text{(Robin)},
\end{cases}
\end{equation*}
is called {\em an averaged equation\/} of~\eqref{nonauton-eq1} ({\em
the averaged equation\/} of~\eqref{random-eq1}) if
$(\hat{c},\hat{d})$ is an averaged function of $(c,d)$ (the averaged
function of $(c,d)$).

Denote $\lambda(\hat{c},\hat{d})$ to be the principal eigenvalue of
\eqref{nonauton-or-random-avg}.  We then have the following main
results of the paper.

\medskip
{\em
\begin{itemize}
\item[5)]
Consider \eqref{nonauton-eq1}.  Then $\lambdainf(c,d) \ge
\lambda(\hat{c},\hat{d})$ for some averaged function
$(\hat{c},\hat{d})$ of $(c,d)$ and $\lambdasup(c,d) \ge
\lambda(\hat{c},\hat{d})$ for any averaged function
$(\hat{c},\hat{d})$ of $(c,d)$ \textup{(}see Theorem
\ref{ch5-smoothlb-thm1}(1)\textup{)}.  Moreover, if $(c,d)$ is
uniquely ergodic and minimal, then $\lambdainf(c,d)( =
\lambdasup(c,d)) = \lambda(\hat{c},\hat{d})$ for the
\textup{(}necessarily unique\textup{)} averaged function
$(\hat{c},\hat{d})$ of $(c,d)$ if and only if $c(t,x) = c_{1}(x)
+ c_{2}(t)$ and $d(t,x) = d(x)$ \textup{(}see
Theorem~\ref{ch5-smoothlb-thm2}(1)\textup{)}.
\item[6)]
Consider \eqref{random-eq1}.  Then $\lambda(c,d) \ge
\lambda(\hat{c},\hat{d})$ \textup{(}see Theorem
\ref{ch5-smoothlb-thm1}(2)\textup{)}.  Further, $\lambda(c,d) =
\lambda(\hat{c},\hat{d})$ if and only if there is $\Omega^{*}
\subset \Omega$ with $\PP(\Omega^{*}) = 1$ such that
$c(\theta_{t}\omega,x) = c_{1}(x) + c_{2}(\theta_{t}\omega)$ for
any $\omega \in \Omega^{*}$, $t \in \RR$ and $x \in \bar{D}$, and
$d(\theta_{t}\omega,x) = d(x)$ for any $\omega \in \Omega^{*}$,
$t \in \RR$ and $x \in \p D$ \textup{(}see
Theorem~\ref{ch5-smoothlb-thm2}(2)\textup{)}.
\end{itemize}
}

\medskip
\noindent Hence time variations cannot reduce the principal spectrum
and principal Lyapunov exponent (or the principal eigenvalues of the
time averaged equations give lower bounds of principal spectrum and
principal Lyapunov exponent of non-averaged equations).  Indeed, the
time variations increase the principal spectrum and principal
Lyapunov exponents except in the degenerate cases.  In the biological
context these results mean that invasion by a new species (see
\cite{CC}, p.~220) is always easier in the time-dependent case or
that time variations favor persistence (viewing both
\eqref{nonauton-eq1} and \eqref{nonauton-or-random-avg} as linear
population growth models, then by 5), positive solutions of all
averaged equations \eqref{nonauton-or-random-avg} of
\eqref{nonauton-eq1} bounded away from zero implies positive
solutions of \eqref{nonauton-eq1} also bounded away from zero, but
not vice~versa in general).

It should be pointed out that the results 5), 6) have been proved in
\cite{HuShVi} and \cite{MiSh1} when the boundary conditions are time
independent.  They are new when the boundary conditions are time
dependent and the proof presented in this paper is not the same as
those in \cite{HuShVi} and \cite{MiSh1}.

It should be also pointed out that the results 1)--4) apply to fully
time dependent/random parabolic equations (i.e., equations in which
all the coefficients can depend on $t$/$\theta_t\omega$).  But 5) and
6) are mainly for equations of form \eqref{nonauton-eq1} and
\eqref{random-eq1}, respectively.

\medskip

The rest of the paper is organized as follows.  In
Section~\ref{preliminaries} we collect several elementary lemmas and
introduce some standing notations for future reference.  We review
some existence and regularity theorems  and construct the
skew-product semiflow generated by \eqref{nonauton-eq1} and
\eqref{random-eq1} in Section~\ref{skew-product}.
Section~\ref{strong-monotonicity} is devoted to the study of the
monotonicity of the skew-product semiflow constructed in
Section~\ref{skew-product} and the existence of global positive
solutions of \eqref{nonauton-eq2}. Definition and basic properties of
principal spectrum and principal Lyapunov exponents are discussed in
Section~\ref{principal-spectrum}.  We prove the time averaging
results in Section~\ref{main-results}.

\medskip
The authors are grateful to the referees for their remarks.

\section{Elementary Lemmas and notations}
\label{preliminaries}
We collect first, for further reference, some elementary results.

First of all, let $Z$ be a compact metric space and ${\mathcal B}(Z)$
be the Borel $\sigma$-algebra of $Z$.  $(Z,\RR) :=
(Z,\{\sigma_t\}_{t\in\RR})$ is called a {\em compact flow\/} if
$\sigma_t \colon Z \to Z$ ($t \in \RR$) satisfies:  $[\, (t,z)
\mapsto \sigma_{t}z\,]$ is jointly continuous in $(t,z) \in \RR
\times Z$, $\sigma_0 = \mathrm{id}$, and $\sigma_s \circ \sigma_t =
\sigma_{s+t}$ for any $s, t \in \RR$. We may write $z \cdot t$ or
$(z,t)$ for $\sigma_{t}z$.  A probability measure $\mu$ on $(Z,
\mathcal{B}(Z))$ is called an {\em invariant measure\/} for
$(Z,\{\sigma_t\}_{t\in\RR})$ if for any $E \in \mathcal{B}(Z)$ and
any $t \in \RR$, $\mu(\sigma_t(E)) = \mu(E)$. An invariant measure
$\mu$ for $(Z,\{\sigma_t\}_{t\in\RR})$ is said to be {\em ergodic\/}
if for any $E \in \mathcal{B}(Z)$ satisfying $\mu(\sigma_t^{-1}(E)
\bigtriangleup E) = 0$ for all $t \in \RR$, $\mu(E) = 1$ or $\mu(E) =
0$. The compact flow $(Z,\{\sigma_t\}_{t\in\RR})$ is said to be {\em
uniquely ergodic\/} if it has a unique invariant measure (in such
case, the unique invariant measure is necessarily ergodic).  We say
that $(Z,\{\sigma_t\}_{t\in\RR})$ is {\em minimal\/} or {\em
recurrent\/} if for any $z \in Z$, the orbit $\{\,\sigma_{t}z:
t\in\RR\,\}$ is dense in $Z$.

Let $\OFP$ be a probability space, $\{\theta_t\}_{t\in\RR}$ be a
family of $\PP$-preserving transformations (i.e.,
$\PP(\theta_t^{-1}(F)) = \PP(F)$ for any $F \in \mathcal{F}$ and $t
\in \RR$) such that $(t,\omega) \mapsto \theta_{t}\omega$ is
measurable, $\theta_0 = \mathrm{id}$, and $\theta_{t+s} = \theta_{t}
\circ \theta_{s}$ for all $t, s \in \RR$.  Thus
$\{\theta_t\}_{t\in\RR}$ is a flow on $\Omega$ and
$(\OFP,\{\theta_t\}_{t\in\RR})$ is called a {\em metric dynamical
system\/}.  $(\OFP,\{\theta_t\}_{t\in\RR})$ is said to be {\em
ergodic\/} if for any $F \in \mathcal{F}$ satisfying
$\PP(\theta_t^{-1}(F) \bigtriangleup F) = 0$ for any $t \in \RR$,
$\PP(F) = 1$ or $\PP(F) = 0$.

In the following, we assume that $(\OFP,\{\theta_t\}_{t\in\RR})$ is
an ergodic metric dynamical system.
\begin{lemma}
\label{ch5-pre-holder-lm}
\begin{itemize}
\item[{\rm (1)}]
Let $h_i \colon [0,T] \times D \to \RR$ \textup{(}$i = 1,2,\dots,
N$\textup{)} be square-integrable in $t \in [0,T]$ and $a_{ij} =
a_{ji} \colon D \to \RR$ \textup{(}$i, j = 1,2,\dots,N$\textup{)}
satisfy
\begin{equation*}
\sum_{i,j=1}^N a_{ij}(x)\xi_i\xi_j \ge \alpha_0 \sum_{i=1}^N \xi_i^2
\end{equation*}
for some $\alpha_0 > 0$ and any $x \in \bar{D}$, $\xi =
(\xi_1,\xi_2,\dots,\xi_N)^{\top} \in \RR^N$.  Then for any $x \in
D$,
\begin{align*}
& \sum_{i,j=1}^N a_{ij}(x)\; \frac {1}{T}\int_0^T h_i(t,x)\,dt \;
\frac{1}{T}\int_0^T h_j(t,x)\,dt \\
\le & \sum_{i,j=1}^N a_{ij}(x) \; \frac{1}{T}\int_0^T
h_i(t,x)h_j(t,x)\,dt.
\end{align*}
Moreover, the equality holds at some $x_0 \in D$ if and only if
$h_i(t,x_0) = \tilde{h}_i(x_0)$ for some $\tilde{h}_i(x_0)$
\textup{(}$i = 1,2,\dots,N$\textup{)} and a.e.\ $t \in [0,T]$.
\item[{\rm (2)}]
Let $h_i \colon \Omega \times D \to \RR$ \textup{(}$i =
1,2,\dots,N$\textup{)} be square-integrable in $\omega \in
\Omega$ and $a_{ij} = a_{ji} \colon D \to \RR$ $(i,j =
1,2,\dots,N)$ satisfy
\begin{equation*}
\sum_{i,j=1}^Na_{ij}(x)\xi_i\xi_j \ge \alpha_0 \sum_{i=1}^N\xi_i^2
\end{equation*}
for some $\alpha_0 > 0$ and any $x \in \bar{D}$, $\xi =
(\xi_1,\xi_2,\dots,\xi_N)^\top \in \RR^N$. Then for any $x \in
D$,
\begin{align*}
& \sum_{i,j=1}^N a_{ij}(x)\int_{\Omega}
h_i(\omega,x)\,d\PP(\omega)\int_\Omega h_j(\omega,x)\,d\PP(\omega)
\\
\le & \sum_{i,j=1}^{N} a_{ij}(x)\int_\Omega h_i(\omega,x)
h_j(\omega,x)\,d\PP(\omega).
\end{align*}
Moreover, the equality holds at some $x_0 \in D$ if and only if
$h_i(\omega,x_0) = \tilde{h}_i(x_0)$ for some $\tilde{h}_i(x_0)$
\textup{(}$i = 1,2,\dots, N$\textup{)} and a.e.\ $\omega \in
\Omega$.
\end{itemize}
\end{lemma}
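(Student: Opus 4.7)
The plan is to treat both parts uniformly, since they amount to the same Jensen-type inequality applied to two different probability measures (normalized Lebesgue measure on $[0,T]$ in part (1), and $\PP$ on $\Omega$ in part (2)). Fix $x \in D$. The quadratic form $Q_x(\xi) := \sum_{i,j=1}^{N} a_{ij}(x)\xi_i\xi_j$ is convex on $\RR^N$ (in fact strictly convex, by the uniform ellipticity bound $Q_x(\xi) \ge \alpha_0\abs{\xi}^2$). Applying Jensen's inequality to $Q_x$ and the vector-valued integrable map $t \mapsto (h_1(t,x),\dots,h_N(t,x))$ (respectively $\omega \mapsto (h_1(\omega,x),\dots,h_N(\omega,x))$) yields precisely the asserted inequality.

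I would prefer to write the argument by direct expansion rather than invoking Jensen as a black box, since this also gives the equality case cleanly. Writing $\bar{h}_i(x)$ for the average of $h_i(\cdot,x)$ with respect to the relevant probability measure $\mu$, and setting $g_i(s,x) := h_i(s,x) - \bar{h}_i(x)$, one computes
\begin{equation*}
\int \sum_{i,j=1}^{N} a_{ij}(x)\,g_i(s,x)g_j(s,x)\,d\mu(s) = \int \sum_{i,j=1}^{N} a_{ij}(x) h_i h_j\,d\mu - \sum_{i,j=1}^{N} a_{ij}(x)\bar{h}_i(x)\bar{h}_j(x),
\end{equation*}
using symmetry $a_{ij}=a_{ji}$ and the fact that the cross term integrates to $-\sum a_{ij}\bar{h}_i\bar{h}_j$. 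The left-hand side is nonnegative because the integrand is, by positivity of $(a_{ij}(x))$. This proves the inequality.

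For the equality case at $x_0$, nonnegativity of the integrand together with vanishing of the integral force $\sum_{i,j} a_{ij}(x_0) g_i(s,x_0) g_j(s,x_0) = 0$ for $\mu$-a.e.\ $s$. By the ellipticity assumption, this quantity dominates $\alpha_0 \sum_i g_i(s,x_0)^2$, so $g_i(s,x_0) = 0$ for a.e.\ $s$ and every $i$. Hence $h_i(s,x_0) = \bar{h}_i(x_0) =: \tilde{h}_i(x_0)$ for a.e.\ $s$, which is the required characterization. The converse direction (that constancy in $s$ forces equality) is immediate from the definition of the average.

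There is no real obstacle here; the only thing to be careful about is that the pointwise strict positive definiteness coming from $\alpha_0 > 0$ is what upgrades the equality condition from ``$(g_i(s,x_0))$ lies in $\ker(a_{ij}(x_0))$ for a.e.\ $s$'' (which would be the conclusion for a merely positive semidefinite matrix) to the much stronger ``$g_i(s,x_0) = 0$ for every $i$ and a.e.\ $s$''. I would state the argument once and note that it applies verbatim with $d\mu(t) = dt/T$ on $[0,T]$ for part (1) and $d\mu(\omega) = d\PP(\omega)$ on $\Omega$ for part (2), so no separate proof is needed.
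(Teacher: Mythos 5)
Your proof is correct and self-contained. The paper itself does not give an argument here: it simply cites \cite[Lemma~2.2]{HuShVi} for part (1) and \cite[Lemma~3.5]{MiSh1} for part (2). Your variance-decomposition argument (write $g_i = h_i - \bar h_i$, observe that
\begin{equation*}
\int \sum_{i,j} a_{ij}(x)\,g_i g_j\,d\mu = \int \sum_{i,j} a_{ij}(x)\,h_i h_j\,d\mu - \sum_{i,j} a_{ij}(x)\,\bar h_i \bar h_j \ge 0,
\end{equation*}
then use the uniform ellipticity $\sum a_{ij}g_ig_j \ge \alpha_0\sum g_i^2$ both for the sign and for the equality case) is exactly the canonical route, and treating the two parts at once via an abstract probability measure is a clean unification. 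Your emphasis on the role of $\alpha_0>0$ in the equality case is the right thing to flag: without strict positive definiteness one would only conclude that $(g_i(s,x_0))_i$ lies a.e.\ in the kernel of $(a_{ij}(x_0))$, not that each $g_i$ vanishes.
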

\begin{proof}
See~\cite[Lemma~2.2]{HuShVi} for (1) and \cite[Lemma~3.5]{MiSh1}~for
(2).
\end{proof}

\begin{lemma}[Birkhoff's Ergodic Theorem]
\label{ch5-pre-ergodic-lm}
Let $h \in L^1\OFP$. Then there is an invariant measurable set
$\Omega_0 \subset \Omega$ such that $\PP(\Omega_0) = 1$ and
\begin{equation*}
\lim\limits_{T\to\infty} \frac{1}{T}\int\limits_0^{T}
h(\theta_t\omega)\,dt = \int\limits_\Omega h(\cdot)\,d\PP(\cdot)
\end{equation*}
for any $\omega \in \Omega_0$.
\end{lemma}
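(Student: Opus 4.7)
The plan is to reduce the continuous-time statement to the classical discrete-time Birkhoff ergodic theorem applied to the time-one map $\theta_{1}$, and then to promote the resulting $\theta_{1}$-invariance of the limit to $\theta_{t}$-invariance for every real $t$ so that the flow ergodicity hypothesis can be invoked. First I would set $g(\omega) := \int_{0}^{1} h(\theta_{s}\omega)\,ds$. By Fubini and the $\PP$-preserving property of each $\theta_{s}$, $g$ belongs to $L^{1}\OFP$ with $\int_{\Omega} g\,d\PP = \int_{\Omega} h\,d\PP$. Since $\theta_{1}$ is a $\PP$-preserving transformation, the discrete Birkhoff theorem produces a full-measure set on which $\frac{1}{n}\sum_{k=0}^{n-1} g(\theta_{k}\omega) \to g^{*}(\omega)$; using the flow law this sum telescopes into $\frac{1}{n}\int_{0}^{n} h(\theta_{t}\omega)\,dt$, giving the desired convergence along integer times.

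Next I would pass from integer $T$ to real $T \to \infty$. Writing $T = n + r$ with $n = \floor{T}$ and $r \in [0,1)$, and setting $\tilde g(\omega) := \int_{0}^{1}\abs{h(\theta_{s}\omega)}\,ds \in L^{1}\OFP$, the fractional remainder obeys $\abs{\int_{n}^{n+r} h(\theta_{t}\omega)\,dt} \le \tilde g(\theta_{n}\omega)$. Applying the discrete Birkhoff theorem to $\tilde g$ yields Ces\`aro convergence of $\tilde g(\theta_{k}\omega)$ in $k$, and a standard Ces\`aro-to-term argument then forces $\tilde g(\theta_{n}\omega)/n \to 0$ almost surely. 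Hence the fractional remainder contributes nothing and $\frac{1}{T}\int_{0}^{T} h(\theta_{t}\omega)\,dt \to g^{*}(\omega)$ almost surely.

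Third, I would upgrade $\theta_{1}$-invariance of $g^{*}$ to full $\theta_{t}$-invariance. For any $t \in \RR$, the substitution $u = s+t$ yields
\[
\frac{1}{T}\int_{0}^{T} h(\theta_{s+t}\omega)\,ds = \frac{1}{T}\int_{0}^{T} h(\theta_{u}\omega)\,du + \frac{1}{T}\int_{T}^{T+t} h(\theta_{u}\omega)\,du - \frac{1}{T}\int_{0}^{t} h(\theta_{u}\omega)\,du,
\]
and the two sliver terms are controlled by the same kind of estimate as in step two, so both vanish as $T \to \infty$. Thus $g^{*}(\theta_{t}\omega) = g^{*}(\omega)$ on a common full-measure set, which I would arrange to be flow-invariant and call $\Omega_{0}$. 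The flow ergodicity then forces $g^{*}$ to be $\PP$-a.s.\ constant, and integrating identifies the constant as $\int_{\Omega} g\,d\PP = \int_{\Omega} h\,d\PP$.

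The main obstacle is precisely this last promotion step: the discrete Birkhoff theorem only delivers $\theta_{1}$-invariance of $g^{*}$ a priori, and $\theta_{1}$ alone need \emph{not} be ergodic as a single transformation even when the full flow $\{\theta_{t}\}_{t\in\RR}$ is ergodic. The substantive content is therefore to sharpen invariance along $\ZZ$ to invariance along all of $\RR$ on a common full-measure set, which is exactly what allows the assumed flow-level ergodicity to identify the limit.
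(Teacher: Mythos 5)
Your argument is essentially correct and follows the standard textbook reduction of the continuous-time Birkhoff ergodic theorem to the discrete-time case. The paper itself does not prove this lemma at all; its ``proof'' is simply a citation to Arnold's \emph{Random Dynamical Systems}. So there is no internal argument in the paper to compare yours against. That said, a few remarks on your route. Setting $g(\omega) = \int_0^1 h(\theta_s\omega)\,ds$, applying discrete Birkhoff to $\theta_1$, and telescoping the partial sums into $\frac{1}{n}\int_0^n h(\theta_t\omega)\,dt$ is clean, and the Ces\`aro-to-term estimate $\tilde{g}(\theta_n\omega)/n \to 0$ correctly disposes of the fractional remainder. You also rightly identify the one genuinely delicate point: the discrete theorem only gives a $\theta_1$-invariant limit $g^*$, and $\theta_1$ need not be ergodic on its own. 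Your change-of-variable computation shows $g^*\circ\theta_t = g^*$ $\PP$-a.e.\ for each fixed $t\in\RR$, and since the paper's definition of flow-ergodicity is exactly that sets $F$ with $\PP(\theta_t^{-1}F\triangle F)=0$ for every $t$ are trivial, this per-$t$ a.e.\ invariance suffices to conclude $g^*$ is a.s.\ constant; you do not need simultaneous invariance for all $t$ on a fixed full-measure set. Identifying the constant via $\int g^*\,d\PP = \int g\,d\PP = \int h\,d\PP$ (dominated/Vitali convergence from the $L^1$ Birkhoff theorem) closes the loop, and the invariance of the resulting $\Omega_0$ follows from the same sliver estimates you already used. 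Net effect: you supply a self-contained derivation where the paper merely outsources the result, at the cost of a slightly longer argument.
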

\begin{proof}
See \cite{Arn} or references therein.
\end{proof}

\begin{lemma}
\label{averaging-uniform}
Assume that $h \colon \Omega \times D \to \RR$ \textup{(}resp.~$h
\colon \Omega \times \bar{D} \to \RR$\textup{)} has the following
properties:
\begin{enumerate}
\item[{\rm (i)}]
$h(\cdot,x)$ belongs to $L^{1}(\Omega)$, for each $x \in D$,
\item[{\rm (ii)}]
for each $x \in D$ \textup{(}resp.~$x \in \bar{D}$\textup{)} and
each $\epsilon > 0$ there is $\delta > 0$ such that if $y \in D$
\textup{(}resp.~$y \in \bar{D}$\textup{)}, $\omega \in \Omega$
and $\abs{x - y} < \delta$ then $\abs{h(\omega,x) - h(\omega,y)}
< \epsilon$, where $\abs{\cdot}$ stands for the norm in $\RR^{N}$
or the absolute value, depending on the context.
\end{enumerate}
Denote, for each $x \in D$ \textup{(}resp.~$x \in \bar{D}$\textup{)},
\begin{equation*}
\hat{h}(x) := \int\limits_{\Omega} h(\omega,x)\, d\PP(\omega).
\end{equation*}
Then
\begin{enumerate}
\item[{\rm (a)}]
for any $x \in D$ \textup{(}resp.~$x \in \bar{D}$\textup{)} and
any $\epsilon > 0$ there is $\delta
> 0$ \textup{(}the same as in \textup{(ii))} such that if $y \in
D$ \textup{(}resp.~$y \in \bar{D}$\textup{)}, $\omega \in \Omega$
and $\abs{x - y} < \delta$ then $\abs{\hat{h}(x) - \hat{h}(y)} <
\epsilon$,
\item[{\rm (b)}]
there is a measurable $\Omega' \subset \Omega$ with $\PP(\Omega')
= 1$ such that
\begin{equation*}
\lim\limits_{T\to\infty} \frac{1}{T} \int\limits_{0}^{T}
h(\theta_{t}\omega,x)\, dt = \hat{h}(x)
\end{equation*}
for all $\omega \in \Omega'$ and all $x \in D$ \textup{(}resp.~$x
\in \bar{D}$\textup{)}. Moreover the convergence is uniform in $x
\in D_0$, for any compact $D_0 \Subset D$ \textup{(}resp. uniform
in $x \in \bar{D}$\textup{)}.
\end{enumerate}
\end{lemma}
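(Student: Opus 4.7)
The plan is to prove (a) by a one-line integral estimate, and to reduce (b) to Birkhoff's ergodic theorem applied pointwise on a countable dense subset, using the equicontinuity in hypothesis~(ii) to patch things together.

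For part (a), I would simply observe that, under hypothesis~(ii), for $x,y$ with $\abs{x-y}<\delta$ we have
\begin{equation*}
\abs{\hat{h}(x)-\hat{h}(y)}\le\int_\Omega\abs{h(\omega,x)-h(\omega,y)}\,d\PP(\omega)\le\epsilon.
\end{equation*}
A minor preliminary point: in the $\bar{D}$ case one needs $h(\cdot,x)\in L^1(\Omega)$ also at boundary points $x\in\p D$. This follows from (i) and (ii): approximating $x\in\p D$ by $x_n\in D$ gives $\abs{h(\omega,x)}\le\abs{h(\omega,x_n)}+1$ uniformly in $\omega$ for $n$ large, so $h(\cdot,x)$ is dominated by an $L^1$ function. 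Thus $\hat{h}(x)$ is well defined in both cases.

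For part (b), fix a countable dense subset $\{x_k\}_{k\in\NN}$ of $D$ (resp.\ $\bar{D}$). By Lemma~\ref{ch5-pre-ergodic-lm} applied to each $h(\cdot,x_k)$, for each $k$ there is a full-measure set $\Omega_k\subset\Omega$ on which $\frac{1}{T}\int_0^T h(\theta_t\omega,x_k)\,dt\to\hat{h}(x_k)$. Put $\Omega':=\bigcap_k\Omega_k$, which still has $\PP(\Omega')=1$. For any $\omega\in\Omega'$ and any $x$ in $D$ (resp.\ $\bar{D}$), given $\epsilon>0$ pick $\delta>0$ as in (ii) with $\epsilon/3$ in place of $\epsilon$; choose $x_k$ with $\abs{x-x_k}<\delta$, so that by (ii) and part~(a)
\begin{equation*}
\Bigl|\tfrac{1}{T}\!\!\int_0^T\!\!h(\theta_t\omega,x)\,dt-\tfrac{1}{T}\!\!\int_0^T\!\!h(\theta_t\omega,x_k)\,dt\Bigr|\le\tfrac{\epsilon}{3},\qquad\abs{\hat{h}(x)-\hat{h}(x_k)}\le\tfrac{\epsilon}{3}.
\end{equation*}
Together with Birkhoff convergence at $x_k$, this yields $\lim_{T\to\infty}\frac{1}{T}\int_0^T h(\theta_t\omega,x)\,dt=\hat{h}(x)$ pointwise.

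The only nontrivial step remaining is the uniformity of this limit on compact sets. Given a compact $D_0\Subset D$ (or all of $\bar{D}$ in the alternative case) and $\epsilon>0$, choose $\delta$ as above; cover $D_0$ (resp.\ $\bar{D}$) by finitely many open balls of radius $\delta$ centered at points $x_{k_1},\dots,x_{k_m}$ from the dense subset. For any $\omega\in\Omega'$ there is $T_0(\omega,\epsilon)$ so that $\bigl|\frac{1}{T}\int_0^T h(\theta_t\omega,x_{k_j})\,dt-\hat{h}(x_{k_j})\bigr|<\epsilon/3$ for all $j=1,\dots,m$ and $T\ge T_0$; combining with the two equicontinuity estimates above gives the uniform bound $\epsilon$ for all $x$ in the compact set. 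The main conceptual obstacle is exactly this transition from a countable full-measure set to all $x$ simultaneously, and it is surmounted by the equicontinuity in~(ii) together with the finite-cover argument on compacts; everything else is routine.
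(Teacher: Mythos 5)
Your proof takes essentially the same route as the paper's: apply Birkhoff's Ergodic Theorem on a countable dense subset, intersect the full-measure sets, and use the equicontinuity in hypothesis~(ii) together with part~(a) to transfer the convergence to arbitrary points. In fact you go a bit further than the printed proof, which stops after the pointwise argument and leaves the uniformity-on-compacts claim implicit; your finite-cover argument supplies it (one should just note that the $\delta$ from (ii) is a priori allowed to depend on $x$, so the finite subcover step uses the standard fact that pointwise equicontinuity on a compact set yields uniform equicontinuity, or equivalently one covers $D_0$ by balls $B(x,\delta(x)/2)$ and passes to a finite subcover), and your preliminary remark on integrability of $h(\cdot,x)$ at boundary points is a genuine small gap in the statement that the paper does not address.
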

\begin{proof}
Part (a) follows easily by the fact that the continuity is uniform in
$\omega \in \Omega$.  To prove (b), take a countable dense set
$\{x_l\}_{l=1}^{\infty}$ in $D$.  By Birkhoff's Ergodic Theorem
(Lemma~\ref{ch5-pre-ergodic-lm}) for each $l \in \NN$ there is a
measurable $\Omega_l \subset \Omega$ with $\PP(\Omega_l) = 1$ such
that
\begin{equation*}
\lim\limits_{T\to\infty} \frac{1}{T} \int\limits_{0}^{T}
h(\theta_{t}\omega,x_l)\, dt = \hat{h}(x_l)
\end{equation*}
for each $\omega \in \Omega_l$.  Take $\Omega' :=
\bigcap_{l=1}^{\infty} \Omega_l$.

Fix $x \in D$ (resp.~$x \in \bar{D}$).  For $\epsilon > 0$ take
$\delta > 0$ such that if $\abs{x - y} < \delta$ then
$\abs{h(\omega,x) - h(\omega,y)} < \epsilon/3$ and $\abs{\hat{h}(x) -
\hat{h}(y)} < \epsilon/3$.  Let $x_l$ be such that $\abs{x - x_l} <
\delta$, and let $T_0 > 0$ be such that
\begin{equation*}
\left\lvert \frac{1}{T} \int\limits_{0}^{T}
h(\theta_{t}\omega,x_l)\, dt - \hat{h}(x_l) \right\rvert <
\frac{\epsilon}{3}
\end{equation*}
for all $T > T_0$.  Then
\begin{equation*}
\left\lvert \frac{1}{T} \int\limits_{0}^{T} h(\theta_{t}\omega,x)\,
dt - \hat{h}(x) \right\rvert < \epsilon
\end{equation*}
for all $T > T_0$. (b) then follows.
\end{proof}

\begin{lemma}
\label{averaging-derivative}
Assume that $h \colon \Omega \times D \to \RR$ \textup{(}resp.~$h
\colon \Omega \times \bar{D} \to \RR$\textup{)} has the following
properties:
\begin{enumerate}
\item[{\rm (i)}]
$h(\cdot,x)$ belongs to $L^{1}(\Omega)$, for each $x \in D$,
\item[{\rm (ii)}]
$(\p h/\p x_i)(\omega,x)$ exists for each $\omega \in \Omega$ and
each $x \in D$ \textup{(}resp.~each $x \in \bar{D}$\textup{)};
further, $(\p h/\p x_i)(\cdot,x)$ belongs to $L^{1}(\Omega)$, for
each $x \in D$,
\item[{\rm (iii)}]
there exists $\alpha \in (0,1]$ such that for each $x \in D$
\textup{(}resp.~each $x \in \bar{D}$\textup{)} there are $L > 0$
and $\delta_0 > 0$ with the property that
\begin{equation*}
\left\lvert \frac{\p h}{\p x_i}(\omega,x) - \frac{\p h}{\p
x_i}(\omega,y) \right\rvert \le L \abs{x - y}^{\alpha}
\end{equation*}
for any $\omega \in \Omega$ and any $y \in D$ \textup{(}resp.~any
$y \in \bar{D}$\textup{)} with $|x - y| < \delta_0$.
\end{enumerate}
Denote, for each $x \in D$ \textup{(}resp.~$x \in \bar{D}$\textup{)},
\begin{equation*}
\hat{h}(x) := \int\limits_{\Omega} h(\omega,x)\, d\PP(\omega).
\end{equation*}
Then
\begin{enumerate}
\item[{\rm (a)}]
for each $x \in D$ \textup{(}resp.~each $x \in \bar{D}$\textup{)}
the derivative $(\p \hat{h}/\p x_i)(x)$ exists, and the equality
\begin{equation*}
\frac{\p \hat{h}}{\p x_i}(x) = \int\limits_{\Omega} \frac{\p h}{\p
x_i}(\omega,x)\, d\PP(\omega)
\end{equation*}
holds,
\item[{\rm (b)}]
for each $x \in D$ \textup{(}resp.~each $x \in \bar{D}$\textup{)}
there are $L > 0$ and $\delta_0 > 0$ \textup{(}the same as in
\textup{(}ii\textup{))} with the property that
\begin{equation*}
\left\lvert \frac{\p \hat{h}}{\p x_i}(x) - \frac{\p \hat{h}}{\p
x_i}(y) \right\rvert \le L \abs{x - y}^{\alpha}
\end{equation*}
for any $y \in D$ \textup{(}resp.~any $y \in \bar{D}$\textup{)}
with $\abs{x - y} < \delta_0$,
\item[{\rm (c)}]
there is a measurable $\Omega' \subset \Omega$ with $\PP(\Omega')
= 1$ such that
\begin{equation*}
\frac{\p \hat{h}}{\p x_i}(x) = \lim\limits_{T\to\infty} \frac{1}{T}
\int\limits_{0}^{T} \frac{\p h}{\p x_i}(\theta_{t}\omega,x)\, dt
\end{equation*}
for all $\omega \in \Omega'$ and all $x \in D$ \textup{(}resp.~$x
\in \bar{D}$\textup{)}. Moreover, the convergence is uniform in
$x \in D_{0}$, for any compact $D_0 \Subset D$ \textup{(}resp.
uniform in $x \in \bar{D}$\textup{)}.
\end{enumerate}
\end{lemma}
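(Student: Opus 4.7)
The plan is to prove the three parts in order, using the Hölder regularity in (iii) to justify interchanging differentiation and integration, and finally reducing (c) to the uniform ergodic averaging Lemma~\ref{averaging-uniform}.

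For part (a), fix $x$ (in $D$ or $\bar{D}$) and let $e_i$ denote the $i$th standard basis vector. For $s \neq 0$ small enough that $x + se_i$ lies in the domain and $\abs{s} < \delta_0$, write the difference quotient of $\hat{h}$ as
\begin{equation*}
\frac{\hat{h}(x + s e_i) - \hat{h}(x)}{s} = \int_{\Omega} \frac{1}{s}\int_0^s \frac{\p h}{\p x_i}(\omega, x + t e_i)\, dt\, d\PP(\omega),
\end{equation*}
using hypothesis (ii) and Fubini (the inner integrand is integrable in $\omega$ by the bound below). By hypothesis (iii), for $\abs{t} \le \abs{s} < \delta_0$,
\begin{equation*}
\left\lvert \frac{\p h}{\p x_i}(\omega, x + t e_i) \right\rvert \le \left\lvert \frac{\p h}{\p x_i}(\omega, x) \right\rvert + L \delta_0^{\alpha},
\end{equation*}
and the right-hand side is an $L^{1}(\Omega)$ function independent of $s$. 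As $s \to 0$, the inner average over $[0,s]$ converges pointwise in $\omega$ to $(\p h/\p x_i)(\omega, x)$. The dominated convergence theorem then yields $(\p \hat{h}/\p x_i)(x) = \int_\Omega (\p h/\p x_i)(\omega, x)\, d\PP(\omega)$.

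For part (b), use the representation obtained in (a): for any $y$ with $\abs{x - y} < \delta_0$,
\begin{equation*}
\left\lvert \frac{\p \hat{h}}{\p x_i}(x) - \frac{\p \hat{h}}{\p x_i}(y) \right\rvert = \left\lvert \int_{\Omega} \left( \frac{\p h}{\p x_i}(\omega, x) - \frac{\p h}{\p x_i}(\omega, y) \right) d\PP(\omega) \right\rvert \le \int_{\Omega} L \abs{x - y}^{\alpha}\, d\PP(\omega) = L \abs{x - y}^{\alpha},
\end{equation*}
by hypothesis (iii). The same constants $L$ and $\delta_0$ therefore work for $\p \hat{h}/\p x_i$.

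For part (c), apply Lemma~\ref{averaging-uniform} with the function $(\omega, x) \mapsto (\p h/\p x_i)(\omega, x)$. Its hypothesis (i) is exactly (ii) of the present lemma, and its hypothesis (ii) (uniform-in-$\omega$ continuity in $x$) follows from the uniform Hölder estimate in (iii). Lemma~\ref{averaging-uniform}(b) then produces a set $\Omega' \subset \Omega$ with $\PP(\Omega') = 1$ on which
\begin{equation*}
\lim_{T\to\infty} \frac{1}{T} \int_0^T \frac{\p h}{\p x_i}(\theta_t \omega, x)\, dt = \int_\Omega \frac{\p h}{\p x_i}(\omega, x)\, d\PP(\omega),
\end{equation*}
with the convergence uniform on compact subsets of $D$ (resp.\ uniform on $\bar{D}$). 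By part (a) the right-hand side equals $(\p \hat{h}/\p x_i)(x)$, which gives (c). The only nontrivial step is part (a); the main obstacle there is producing an integrable majorant for the difference quotients, which is precisely what hypothesis (iii), combined with (ii), supplies.
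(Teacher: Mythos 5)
Your proof is correct and follows the paper's intended route exactly: parts (a) and (b) are carried out "in the standard way" via difference quotients, Hölder bounds as integrable majorants, and dominated convergence, while part (c) is, as in the paper, a direct application of Lemma~\ref{averaging-uniform}(b) to $\partial h/\partial x_i$ combined with part (a). You have simply filled in the details the paper leaves implicit.
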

\begin{proof}
Parts (a) and (b) follow in a standard way.  Part (c) follows by an
application of Lemma~\ref{averaging-uniform}(b) to the function $(\p
h/\p x_i)(\omega,x)$.
\end{proof}

\medskip
From now on we assume that (A1)--(A6) are satisfied.

Consider the space $H$ consisting of $(\tilde{c}(\cdot,\cdot),
\tilde{d}(\cdot,\cdot))$, where $\tilde{c} \colon \RR \times \bar{D}
\to \RR$ and $\tilde{d} \colon \RR \times \p D \to \RR$ are bounded
continuous.  The set $H$ endowed with the topology of uniform
convergence on compact sets (the open-compact topology) becomes a
Fr\'echet space.

For $(\tilde{c}, \tilde{d}) \in H$ and $t \in \RR$ we define the {\em
time-translate\/} as $(\tilde{c}, \tilde{d}) \cdot t := ((\tilde{c}
\cdot t)(\cdot,\cdot), (\tilde{d} \cdot t)(\cdot,\cdot))$, where
$(\tilde{c} \cdot t)(s,x) := \tilde{c}(s+t,x)$, $s \in \RR$, $x \in
\bar{D}$, and $(\tilde{d} \cdot t)(s,x) := \tilde{d}(s+t,x)$, $s \in
\RR$, $x \in \p D$.  It is well~known that $(\tilde{c},\tilde{d})
\cdot t \in H$ whenever $(\tilde{c}, \tilde{d}) \in H$ and $t \in
\RR$, and that the mapping $[\, \RR \times H \ni
(t,(\tilde{c},\tilde{d})) \mapsto (\tilde{c},\tilde{d}) \cdot t \in H
\,]$ is continuous.

In the case of~\eqref{nonauton-eq1} let $Y = Y(c,d) := \cl\{\,(c,d)
\cdot t:t \in \RR\,\}$.  In the case of~\eqref{random-eq1} let $Y =
Y(\Omega) := \cl\{(c^{\omega},d^{\omega}):\omega \in \Omega\}$ (see
Section~\ref{introduction} for detail).

The following result is a consequence of the Ascoli--Arzel\`a
theorem.
\begin{lemma}
\label{lemma:properties-of-Y}
\begin{itemize}
\item[{\rm (i)}]
$Y$ is a compact subset of $H$.
\item[{\rm (ii)}]
For any $(\tilde{c},\tilde{d}) \in Y$ and any $t \in \RR$ there
holds $(\tilde{c},\tilde{d}) \cdot t \in Y$.
\item[{\rm (iii)}]
For any $(\tilde{c},\tilde{d}) \in Y$, $\tilde{c} \in
C^{2+\alpha,1+\alpha}(\RR \times \bar{D})$.  Moreover, the
$C^{2+\alpha,1+\alpha}(\RR \times \bar{D})$-norms are bounded
uniformly in $Y$ by the same bound as in \textup{(A3)}.
\item[{\rm (iv)}]
For any $(\tilde{c},\tilde{d}) \in Y$, $\tilde{d} \in
C^{2+\alpha,3+\alpha}(\RR \times \p D)$. Moreover, the
$C^{2+\alpha,3+\alpha}(\RR \times \p D)$-norms are bounded
uniformly in $Y$ by the same bound as in \textup{(A4)}.
\item[{\rm (v)}]
For a sequence $(\tilde{c}^{(n)},\tilde{d}^{(n)}) \to
(\tilde{c},\tilde{d})$ in $Y$, the mixed derivatives of
$\tilde{c}^{(n)}$ of order up~to $2$ in $t$ and up~to $1$ in $x$
converge to the respective derivatives of $\tilde{c}$, uniformly
on compact subsets of $\RR \times \bar{D}$.
\item[{\rm (vi)}]
For a sequence $(\tilde{c}^{(n)},\tilde{d}^{(n)}) \to
(\tilde{c},\tilde{d})$ in $Y$, the mixed derivatives of
$\tilde{d}^{(n)}$ of order up~to $2$ in $t$ and up~to $3$ in $x$
converge to the respective derivatives of $\tilde{d}$, uniformly
on compact subsets of $\RR \times \p D$.
\end{itemize}
\end{lemma}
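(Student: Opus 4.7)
The plan is to derive all six parts from two ingredients: the uniform H\"older bounds supplied by (A3)--(A4), together with the Ascoli--Arzel\`a theorem applied along an exhaustion of $\RR \times \bar D$ (resp.\ $\RR \times \p D$) by compact sets. I would address the parts roughly in the order (iii)--(iv), then (i), (ii), and finally (v)--(vi), since the H\"older-type bookkeeping in (iii)--(iv) is what drives everything else.

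For (iii)--(iv), the first observation is that time translation is an isometry of the H\"older spaces $C^{2+\alpha,1+\alpha}(\RR \times \bar D)$ and $C^{2+\alpha,3+\alpha}(\RR \times \p D)$. Hence, in the nonautonomous case, every $(c,d)\cdot t$ lies in these spaces with exactly the same norms as $(c,d)$; in the random case, every $(c^\omega,d^\omega)$ is bounded by the uniform constants given in (A3)(b), (A4)(b). The nontrivial step is to pass these uniform bounds to limits in the open-compact topology. If $(\tilde c^{(n)}, \tilde d^{(n)})$ is a sequence of generators converging to $(\tilde c, \tilde d) \in Y$, equi-boundedness together with equi-H\"older continuity of the appropriate mixed derivatives allows Ascoli--Arzel\`a, combined with a diagonal argument over compact exhaustions, to extract a subsequence whose mixed derivatives of order up to $2$ in $t$ and $1$ in $x$ (resp.\ $2$ in $t$ and $3$ in $x$ on $\p D$) converge uniformly on compact subsets. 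The pointwise limits must agree with the corresponding classical derivatives of $\tilde c$ and $\tilde d$, and the H\"older seminorms persist by pointwise passage to the limit.

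Assertion (i) is then immediate: $Y$ is closed by construction, while the uniform H\"older bounds from (iii)--(iv) together with Ascoli--Arzel\`a yield relative compactness in the open-compact topology. For (ii), one uses that the translation map $\tau_s \colon H \to H$, $(\tilde c,\tilde d) \mapsto (\tilde c,\tilde d)\cdot s$, is continuous on $H$. In the nonautonomous case, if $(\tilde c,\tilde d) = \lim_n (c,d)\cdot t_n$, then $(\tilde c,\tilde d)\cdot s = \lim_n (c,d)\cdot (t_n+s) \in Y$; in the random case the identity $(c^\omega,d^\omega)\cdot s = (c^{\theta_s\omega}, d^{\theta_s\omega})$ shows that translating a generator yields another generator, so closure under translation passes to $Y$.

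Finally, (v)--(vi) reuse the Ascoli--Arzel\`a argument from (iii)--(iv) but now for an arbitrary convergent sequence in $Y$: the uniform H\"older bounds imply that every subsequence of $(\tilde c^{(n)}, \tilde d^{(n)})$ admits a further subsequence along which the prescribed mixed derivatives converge uniformly on compact sets, and the limit is forced to be the corresponding derivative of the open-compact limit $(\tilde c, \tilde d)$. Uniqueness of the limit then promotes subsequential convergence to convergence of the full sequence. The only genuine obstacle is transferring the H\"older estimates through open-compact limits, which the compactness-plus-uniqueness scheme handles; everything else is formal once (iii) and (iv) are in place.
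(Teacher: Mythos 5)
Your proposal is correct and follows essentially the same route as the paper, which disposes of this lemma in a single sentence by stating that it is a consequence of the Ascoli--Arzel\`a theorem and gives no further details. You have filled in exactly the argument the authors had in mind: time translation is a norm isometry of the relevant H\"older spaces, so the uniform bounds in (A3)--(A4) are inherited by the orbit $\{(c,d)\cdot t\}$ (resp.\ by $\{(c^{\omega},d^{\omega})\}$); a diagonal Ascoli--Arzel\`a extraction over a compact exhaustion then gives subsequential uniform-on-compacta convergence of the mixed derivatives; the limit of the derivatives is identified with the derivative of the open-compact limit; H\"older seminorms pass to the pointwise limit; and uniqueness of the candidate limit upgrades subsequential to full-sequence convergence in (v)--(vi). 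Parts (i) and (ii) then follow as you say, with the random-case translation invariance resting on the identity $(c^{\omega},d^{\omega})\cdot s = (c^{\theta_{s}\omega},d^{\theta_{s}\omega})$.
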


We write $\sigma_{t}(\tilde{c},\tilde{d})$ for $(\tilde{c},\tilde{d})
\cdot t \in Y$.  We will denote by $(Y,\RR)$ the compact flow $(Y,
\{\sigma_{t}\}_{t\in\RR})$.

Consider \eqref{nonauton-eq1}.  For $x \in \bar{D}$ and $S < T$ we
denote
\begin{equation*}
\bar{c}(x;S,T) := \frac{1}{T-S}\int\limits_{S}^{T} c(t,x) \, dt.
\end{equation*}
Similarly, for $x \in \p D$ and $S < T$ we denote
\begin{equation*}
\bar{d}(x;S,T) := \frac{1}{T-S}\int\limits_{S}^{T} d(t,x) \, dt.
\end{equation*}
Let
\begin{align}
\label{hat-Y-eq}
\hat{Y}(c,d) := \{\,(\hat{c},\hat{d}):\ & \exists\, S_n < T_n \
\text{with} \  T_n-S_n \to \infty
\ \text{such that} \nonumber \\
& \  (\hat{c},\hat{d}) = \lim_{n\to\infty}
(\bar{c}(\cdot;S_n,T_n),\bar{d}(\cdot;S_n,T_n))\,\},
\end{align}
where the convergence is in $C(\bar{D}) \times C(\p D)$.

The following result is a consequence of the Ascoli--Arzel\`a theorem
(compare Lemma \ref{lemma:properties-of-Y}).
\begin{lemma}
\label{lemma:properties-of-Y(hat-a)}
\begin{itemize}
\item[{\rm (i)}]
$\hat{Y}(c,d)$ is a nonempty compact subset of $C(\bar{D}) \times
C(\p D)$.
\item[{\rm (ii)}]
For any $(\hat{c},\hat{d}) \in \hat{Y}(c,d)$, $\hat{c} \in
C^{1}(\bar{D})$.  Moreover, the $C^{1}(\bar{D})$-norms are
bounded uniformly in $\hat{Y}(c,d)$.
\item[{\rm (iii)}]
For any $(\hat{c},\hat{d}) \in \hat{Y}(c,d)$, $\hat{d} \in
C^{3}(\p D)$. Moreover, the $C^{3}(\p D)$-norms are bounded
uniformly in $\hat{Y}(c,d)$.
\end{itemize}
\end{lemma}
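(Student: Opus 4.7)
The plan is to deduce all three parts from Ascoli--Arzelà, in exactly the same spirit as Lemma~\ref{lemma:properties-of-Y}. The key input is that (A3)--(A4) provide uniform bounds on the spatial derivatives of $c$ and $d$, which transfer to the time averages by differentiating under the integral sign.

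\textbf{Step 1 (uniform smoothness bounds on averages).} I would first show that the family $\{\bar{c}(\cdot;S,T):S<T\}$ is uniformly bounded in $C^{1+\alpha}(\bar{D})$, and $\{\bar{d}(\cdot;S,T):S<T\}$ is uniformly bounded in $C^{3+\alpha}(\p D)$. For any multi-index $\gamma$ with $\abs{\gamma}\le 1$, one has $\p_{x}^{\gamma}\bar{c}(x;S,T)=\frac{1}{T-S}\int_{S}^{T}\p_{x}^{\gamma}c(t,x)\,dt$, whose $L^\infty$-norm is bounded by $\norminfty{\p_{x}^{\gamma}c}$, finite by~(A3); for $\abs{\gamma}=1$, applying the triangle inequality under the integral gives an $\alpha$-Hölder seminorm bounded by the corresponding Hölder seminorm of $\p_{x}^{\gamma}c$ in~$x$ (uniform in~$t$). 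The analogous computation using~(A4), for multi-indices $\abs{\gamma}\le 3$, yields the $C^{3+\alpha}(\p D)$-bound for $\bar{d}(\cdot;S,T)$.

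\textbf{Step 2 (nonemptiness).} Taking $S_n=0$ and any $T_n\to\infty$, the sequence $(\bar{c}(\cdot;0,T_n),\bar{d}(\cdot;0,T_n))$ is bounded in $C^{1+\alpha}(\bar{D})\times C^{3+\alpha}(\p D)$, hence relatively compact in $C(\bar{D})\times C(\p D)$ by Ascoli--Arzelà; any limit point lies in $\hat{Y}(c,d)$.

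\textbf{Step 3 (parts (ii), (iii)).} Fix $(\hat{c},\hat{d})\in\hat{Y}(c,d)$ with defining sequence $(\bar{c}(\cdot;S_n,T_n),\bar{d}(\cdot;S_n,T_n))\to(\hat{c},\hat{d})$ in $C(\bar{D})\times C(\p D)$. By Step~1, for each $\abs{\gamma}\le 1$ the derivatives $\p_{x}^{\gamma}\bar{c}(\cdot;S_n,T_n)$ are uniformly bounded in $C^{\alpha}(\bar{D})$, hence equicontinuous. Extracting a subsequence and using a standard argument (uniform convergence of functions plus uniform convergence of their first derivatives identifies the limit of the derivatives with the derivative of the limit), I conclude $\hat{c}\in C^{1}(\bar{D})$, with norm bounded by the uniform bound from Step~1. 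The same argument applied up to order~$3$ yields $\hat{d}\in C^{3}(\p D)$ with the corresponding uniform bound, establishing~(ii) and~(iii).

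\textbf{Step 4 (compactness).} By Step~1, $\hat{Y}(c,d)$ is contained in a closed ball of $C^{1+\alpha}(\bar{D})\times C^{3+\alpha}(\p D)$, which embeds compactly into $C(\bar{D})\times C(\p D)$; hence $\hat{Y}(c,d)$ is relatively compact there. Closedness is a diagonal argument: if $(\hat{c}_k,\hat{d}_k)\in\hat{Y}(c,d)$ converges in $C(\bar{D})\times C(\p D)$ to $(\hat{c},\hat{d})$, then for each $k$ I select $S^{(k)}<T^{(k)}$ with $T^{(k)}-S^{(k)}>k$ and $\norm{(\bar{c}(\cdot;S^{(k)},T^{(k)}),\bar{d}(\cdot;S^{(k)},T^{(k)}))-(\hat{c}_k,\hat{d}_k)}_{\infty}<1/k$; the triangle inequality shows that this sequence realizes $(\hat{c},\hat{d})$ as an element of $\hat{Y}(c,d)$. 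No step is a serious obstacle; the only care required is the identification of the derivative of the limit with the limit of the derivatives in Step~3, which is a standard consequence of the compact embedding $C^{1+\alpha}\hookrightarrow C^{1}$ (and $C^{3+\alpha}\hookrightarrow C^{3}$).
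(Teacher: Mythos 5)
Your proof is correct and takes the same approach the paper indicates (the paper gives no details beyond citing the Ascoli--Arzel\`a theorem and pointing to Lemma~\ref{lemma:properties-of-Y}); your write-up simply supplies the uniform $C^{1+\alpha}(\bar{D})\times C^{3+\alpha}(\partial D)$ bounds from (A3)--(A4) by differentiating under the integral, the compact-embedding step, and the diagonal argument for closedness, exactly as intended.
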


\vskip2ex
\begin{definition}
\label{ch5-pre-def}
\begin{itemize}
\item[{\rm (1)}]
Let $a$ be as in \eqref{nonauton-eq1}.  We say $(c,d)$ is {\em
uniquely ergodic} if the compact flow $(Y(c,d),\RR)$ is uniquely
ergodic.
\item[{\rm (2)}]
Let $a$ be as in \eqref{nonauton-eq1}.  We say $(c,d)$ is {\em
minimal} or {\em recurrent} if $(Y(c,d),\RR)$ is minimal.
\end{itemize}
\end{definition}

\begin{remark}
\begin{itemize}
\item[(1)]
If $c(t,x)$ and $d(t,x)$ are almost periodic in $t$ uniformly
with respect to $x \in \bar{D}$ and $x \in \p D$, respectively,
then $(c,d)$ is both uniquely ergodic and minimal.

\item[(2)]
If $c(t,x)$ and $d(t,x)$ are almost automorphic in $t$ uniformly
with respect to $x \in \bar{D}$ and $x \in \p D$, respectively,
then $(c,d)$ is minimal, but it may not be uniquely ergodic
\textup{(}see \cite{Jon} for examples\textup{)}.

\item[(3)]
There is $(c,d)$ which is neither uniquely ergodic nor minimal.
For example, let $c(t,x) = \tan^{-1}(t)$  and $d(t,x) \equiv 1$,
then $\{(\pi/2,1)\}$ and $\{(-\pi/2,1)\}$ are two minimal
invariant subsets of $Y(c,d)$, and hence $Y(c,d)$ is neither
uniquely ergodic nor minimal.
\end{itemize}
\end{remark}

\begin{lemma}
\label{ch5-pre-existence-avg-lm}
Consider \eqref{nonauton-eq1} with $(c,d)$ uniquely ergodic, $\mu$
being the unique ergodic measure.  For $(\tilde{c},\tilde{d}) \in
Y(c,d)$ put $\tilde{c}_0(x) := \tilde{c}(0,x)$ and $\tilde{d}_0(x) :=
\tilde{d}(0,x)$.  Then
\begin{equation}
\label{time-space-avg-eq1}
\lim\limits_{T\to\infty} \frac{1}{T}\int\limits_0^T c(t,x)\,dt =
\int\limits_{Y(c,d)} \tilde{c}_0(x) \, d\mu((\tilde{c},\tilde{d}))
\end{equation}
uniformly for $x \in D$, and
\begin{equation}
\label{time-space-avg-eq2} \lim\limits_{T\to\infty}
\frac{1}{T}\int_0^T d(t,x)\,dt = \int\limits_{Y(c,d)} \tilde{d}_0(x)
\, d\mu((\tilde{c},\tilde{d}))
\end{equation}
uniformly for  $x \in \p D$.
\end{lemma}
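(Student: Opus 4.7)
The plan is to reduce the statement to the classical unique-ergodic theorem (Oxtoby's theorem): on a uniquely ergodic compact flow $(Y,\{\sigma_t\})$ with invariant measure $\mu$, for any $f\in C(Y)$ and any $y\in Y$,
\begin{equation*}
\lim_{T\to\infty}\frac{1}{T}\int_0^T f(\sigma_t y)\,dt \;=\; \int_Y f\,d\mu
\end{equation*}
(in fact uniformly in $y$, though I will only need pointwise convergence at the single base point $y=(c,d)$). The only extra work needed is to promote the resulting pointwise-in-$x$ convergence to uniformity in $x\in\bar D$ (resp.\ $x\in\p D$), which I would handle via an equicontinuity argument.

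First, for each fixed $x\in\bar D$ I introduce the evaluation map $f_x\colon Y(c,d)\to\RR$, $f_x((\tilde c,\tilde d)):=\tilde c(0,x)=\tilde c_0(x)$. Continuity of $f_x$ on $Y(c,d)$ is immediate from Lemma~\ref{lemma:properties-of-Y}(v): if $(\tilde c^{(n)},\tilde d^{(n)})\to(\tilde c,\tilde d)$ in $Y$, then $\tilde c^{(n)}(0,x)\to\tilde c(0,x)$. A direct computation gives $f_x(\sigma_t(c,d))=c(t,x)$. Applying Oxtoby's theorem at $y=(c,d)$ to $f=f_x$ therefore yields, for each fixed $x\in\bar D$,
\begin{equation*}
\lim_{T\to\infty}\frac{1}{T}\int_0^T c(t,x)\,dt \;=\; \int_{Y(c,d)}\tilde c_0(x)\,d\mu((\tilde c,\tilde d)),
\end{equation*}
which is the pointwise-in-$x$ assertion.

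To upgrade this to uniform convergence in $x$, I observe that the family $\{f_x\}_{x\in\bar D}$ is uniformly Lipschitz on $Y$: by Lemma~\ref{lemma:properties-of-Y}(iii) the $C^{2+\alpha,1+\alpha}(\RR\times\bar D)$-norms of the elements of $Y$ are uniformly bounded, so there is $M>0$ with $\abs{\tilde c(0,x)-\tilde c(0,x')}\le M\abs{x-x'}$ for every $(\tilde c,\tilde d)\in Y$ and $x,x'\in\bar D$. In particular, $\hat c(x):=\int_{Y(c,d)}\tilde c_0(x)\,d\mu$ is itself Lipschitz in $x$. A standard $\epsilon/3$ argument now closes the gap: given $\epsilon>0$ choose a finite $\delta$-net $\{x_1,\dots,x_k\}\subset\bar D$ with $M\delta<\epsilon/3$; apply the pointwise convergence at each $x_j$ to obtain $T_0$ with $\bigl|\frac{1}{T}\int_0^T c(t,x_j)\,dt-\hat c(x_j)\bigr|<\epsilon/3$ for all $T>T_0$ and all $j$; then for arbitrary $x\in\bar D$ pick $x_j$ within $\delta$ and combine the three $\epsilon/3$ estimates. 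This establishes~\eqref{time-space-avg-eq1}.

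The statement~\eqref{time-space-avg-eq2} for $d$ on $\p D$ is proved by the same argument applied to $g_x(\tilde c,\tilde d):=\tilde d(0,x)$, using Lemma~\ref{lemma:properties-of-Y}(iv) and (vi) in place of (iii) and (v) to get both continuity of $g_x$ on $Y$ and the uniform Lipschitz estimate in $x\in\p D$. The only conceivable obstacle is invoking the correct form of the unique-ergodic theorem for $\RR$-flows rather than for single transformations, but this is entirely standard (it follows, e.g., from the discrete-time statement applied to the time-$1$ map together with joint continuity of $\sigma_t$); everything else is routine bookkeeping enabled by the uniform regularity of elements of $Y$ furnished by (A3)--(A4).
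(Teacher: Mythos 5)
Your proof is correct and hinges on the same key tool as the paper's, namely Oxtoby's unique-ergodic-average theorem applied to the evaluation functionals $f_x((\tilde c,\tilde d))=\tilde c(0,x)$ (resp.\ $g_x$) on the compact flow $(Y(c,d),\RR)$. Where you differ is in the mechanism for promoting pointwise-in-$x$ convergence to uniform convergence. The paper first invokes the Ascoli--Arzel\`a theorem to see that $\{\bar c(\cdot;0,T):T>0\}$ is precompact in $C(\bar D)$, so every sequence $T_n\to\infty$ has a subsequence along which $\bar c(\cdot;0,T_{n_k})$ converges uniformly to some $\check c$; Oxtoby then identifies every such subsequential limit, pointwise and hence everywhere, with $\hat c(x)=\int_{Y(c,d)}\tilde c_0(x)\,d\mu$, and the ``subsequence-of-subsequences'' principle gives full uniform convergence. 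You instead establish pointwise convergence directly from Oxtoby, observe that the uniform $C^{2+\alpha,1+\alpha}$ bounds on $Y$ (Lemma~\ref{lemma:properties-of-Y}(iii), inherited from (A3)) make the family $\{\bar c(\cdot;0,T)\}_T$ and the limit $\hat c$ uniformly Lipschitz in $x$, and then run a finite-net $\epsilon/3$ argument. Both routes are standard and essentially dual uses of the same equicontinuity: the paper packages it through compactness plus uniqueness of limits, while you use it explicitly. Neither buys anything the other lacks here; your version has the small advantage of not needing to recall the ``every subsequence has a further subsequence converging to the same limit'' lemma, at the cost of writing out the net argument. Your treatment of~\eqref{time-space-avg-eq2} via $g_x$ and Lemma~\ref{lemma:properties-of-Y}(iv) is likewise fine.
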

\begin{proof}
We prove only \eqref{time-space-avg-eq1}, the other proof being
similar.  It follows via the Ascoli--Arzel\`a theorem that the set
$\{\, (1/T)\int_{0}^{T} c(t,\cdot)\,dt : T > 0\,\} =
\{\,\bar{c}(\cdot;0,T) : T > 0 \,\}$ has compact closure in
$C(\bar{D})$, consequently from any sequence $(T_n)$ with
$\lim_{n\to\infty}T_n = \infty$ one can extract a subsequence
$(T_{n_k})$ such that $\bar{c}(\cdot;0,T_{n_k})$ converges uniformly
in $x \in \bar{D}$ to some $\check{c}$ (depending perhaps on the
subsequence).

On the other hand, as $(Y(c,d),\RR)$ is uniquely ergodic, for each
continuous $g \colon Y(c,d) \to \RR$ there holds
\begin{equation*}
\lim\limits_{T\to\infty} \frac{1}{T} \int\limits_{0}^{T}g((c,d)
\cdot t)\,dt = \int\limits_{Y(c,d)} g(\cdot) \, d\mu(\cdot)
\end{equation*}
(compare, e.g.,\ Oxtoby~\cite{Oxt}).  Fix $x \in \bar{D}$ and take
$g((\tilde{c},\tilde{d})) := \tilde{c}_0(x)$.  We have thus obtained
that if $\bar{c}(x;0,T_n)$ converges, for some $T_n \to \infty$,
uniformly in $x \in \bar{D}$, then the limit is always equal to
$\check{c}(x) = \int_{Y(c,d)} \tilde{c}_0(x) \, d\mu$.
\end{proof}

We introduce the following standing notations ($X_1$, $X_2$ are
Banach spaces):

\medskip

$\mathcal{L}(X_1,X_2)$ represents the space of all bounded linear
operators from $X_1$ to $X_2$, endowed with the norm topology;

$\lVert \cdot \rVert_{X_1}$ denotes the norm in $X_1$;

$X_1^{*}$ denotes the Banach space dual to $X_1$;

$(\cdot,\cdot)_{X_1,X_1^{*}}$ stands for the duality pairing between
$X_1$ and $X_1^{*}$;

$\norm{\cdot}$ denotes the norm in $L_2(D)$ or the norm in
$\mathcal{L}(L_2(D),L_2(D))$;

$\langle \cdot, \cdot \rangle$ stands for the standard inner product
in $L_2(D)$;

$\lVert \cdot \rVert_{X_1,X_2}$ indicates the norm in
$\mathcal{L}(X_1,X_2)$;

$[\cdot,\cdot]_\theta$  is a complex interpolation functor;

$(\cdot,\cdot)_{\theta,p}$ is a real interpolation functor (see
\cite{BeLo}, \cite{Tri} for more detail);

$\ZZ$ denotes the set of integers;

$\NN$ denotes the set of nonnegative integers.

\section{Skew-product semiflows}
\label{skew-product}
We  construct in this section a linear skew-product semiflow on $X$
generated by~\eqref{nonauton-eq1} or by \eqref{random-eq1}, where $X$
is as in \eqref{X-eq}.

To do so, we first use the theory presented by H. Amann in~\cite{Ama}
to consider the existence  of solution of
\eqref{nonauton-eq2}+\eqref{initial-condition} for any $(\tilde{c},
\tilde{d}) \in Y$ and any $u_0 \in L_p(D)$.  Recall that we assume
(A1)--(A6) throughout.

Let $\mathcal{A}(\tilde{c})$ denote the operator given by
\begin{equation*}
\mathcal{A}(\tilde{c})u = \sum_{i,j=1}^{N} a_{ij}(x)
\frac{\p^{2}u}{\p x_{i} \p x_{j}} + \sum_{i=1}^{N} a_i(x) \frac{\p
u}{\p x_i} + \tilde{c}(0,x)u, \quad x \in D,
\end{equation*}
and let $\mathcal{B}(\tilde{d})$ denote the boundary operator given
by
\begin{equation*}
\mathcal{B}(\tilde{d})u =
\begin{cases}
u & x \in \p D \quad \text{(Dirichlet)}
\\[1.5ex]
\disp \sum_{i=1}^{N} b_i(x) \frac{\p u}{\p x_i} & x \in \p D \quad
\text{(Neumann)}\\[1.5ex]
\disp \sum_{i=1}^{N} b_i(x) \frac{\p u}{\p x_i} + \tilde{d}(0,x)u &
x \in \p D \quad  \text{(Robin)}.
\end{cases}
\end{equation*}
Let
\begin{equation*}
V_{p}^{1}(\tilde{d}) := \{\,u \in W_{p}^{2}(D):
\mathcal{B}(\tilde{d})u = 0\,\}.
\end{equation*}
For given $0 < \theta < 1$ and $1 < p < \infty$,  let
\begin{equation*}
\Vptheta :=
\begin{cases}
(L_p(D),W_{p}^{2}(D))_{\theta,p} \quad
\text{if }2\theta \not \in \NN \\[2ex]
[L_p(D),W_{p}^{2}(D)]_{\theta} \quad \text{if } 2\theta \in \NN
\end{cases}
\end{equation*}
and
\begin{equation*}
\Vptheta(\tilde{d}) :=
\begin{cases}
(L_p(D),V_{p}^{1}(\tilde{d}))_{\theta,p}
\quad \text{if } 2\theta \not \in \NN \\[2ex]
[L_p(D), V_{p}^{1}(\tilde{d})]_{\theta} \quad \text{if } 2\theta \in
\NN.
\end{cases}
\end{equation*}

\begin{proposition}
\label{X-theta-spaces-prop}
\begin{itemize}
\item[{\rm (1)}]
$\Vptheta = W_{p}^{2\theta}$.
\item[{\rm (2)}]
If $2\theta - \frac{1}{p} \ne 0, 1$ then $\Vptheta(\tilde{d})$ is
a closed subspace of $\Vptheta$.
\end{itemize}
\end{proposition}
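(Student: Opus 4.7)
Both assertions are standard consequences of the abstract theory of interpolation of Sobolev spaces on bounded smooth domains, so the proof I envision is essentially a careful citation coupled with a verification that the present setup falls under the hypotheses of the abstract theorems.

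For part (1), my plan is to invoke the classical identification of the real and complex interpolation spaces between $L_p(D)$ and $W_p^2(D)$ with the scale of Sobolev--Slobodeckij / Bessel potential spaces $W_p^{2\theta}(D)$. Under assumption (A1) the boundary $\p D$ is of class $C^{3+\alpha}$, which is more than enough regularity to use the extension/restriction arguments that reduce the question to the case of $\RR^N$; on $\RR^N$ the identities $(L_p,W_p^2)_{\theta,p} = W_p^{2\theta}$ for $2\theta \notin \NN$ and $[L_p,W_p^2]_\theta = W_p^{2\theta} = H_p^{2\theta}$ for $2\theta \in \NN$ are classical (Triebel, \emph{Interpolation Theory, Function Spaces, Differential Operators}, \S2.4.2 and \S4.3.1, or Bergh--L\"ofstr\"om, Chapter~6). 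I would simply cite these references; no new computation is needed.

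For part (2), the strategy is first to note that $V_p^1(\tilde d)$ is a closed subspace of $W_p^2(D)$. Indeed the boundary operator $\mathcal B(\tilde d)$ is, by (A2), (A4), and (A6), a bounded operator from $W_p^2(D)$ into the trace space $W_p^{2 - m_{\mathcal B} - 1/p}(\p D)$, where $m_{\mathcal B} \in \{0,1\}$ is the order of $\mathcal B$; its kernel is therefore closed. Once this is in place, I would apply the abstract interpolation-of-kernels result (sometimes called the ``retraction--coretraction'' or ``subspace'' theorem; see Amann, \emph{Linear and Quasilinear Parabolic Problems I}, Theorem V.1.5.4, or Triebel \S1.17) to the pair $(L_p(D), W_p^2(D))$ and the closed subspace defined by the single boundary condition $\mathcal B(\tilde d)u = 0$. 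The conclusion of that theorem is precisely that $V_p^\theta(\tilde d)$ is a closed subspace of $V_p^\theta = W_p^{2\theta}$, provided the trace $\mathcal B(\tilde d)$ extends by continuity to $W_p^{2\theta}(D)$ and the relevant boundary space coincides with the interpolation trace space.

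The main obstacle, and the reason for the exclusion $2\theta - 1/p \neq 0, 1$, is exactly this last point. The trace operator of order $m_{\mathcal B}$ on $W_p^s(D)$ is bounded and surjective onto $W_p^{s - m_{\mathcal B} - 1/p}(\p D)$ only when $s - m_{\mathcal B} - 1/p > 0$ and $s - m_{\mathcal B} - 1/p \notin \NN$; at the exceptional values $s = m_{\mathcal B} + 1/p$ the trace is either ill-defined or maps into a strictly smaller space, and the characterization of the interpolation space as a kernel of the trace breaks down. Taking $s = 2\theta$ and $m_{\mathcal B} \in \{0,1\}$, the forbidden values collapse to $2\theta - 1/p \in \{0, 1\}$, precisely the ones excluded. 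Verifying that outside these values all hypotheses of the subspace interpolation theorem are satisfied is the heart of the argument; once that is done, the conclusion follows at once.
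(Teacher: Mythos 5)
Your argument is correct and is in substance the same as the paper's, which simply cites Amann's 1984 paper (\cite{Ama}, Theorem 11.6 for part (1) and Lemma 14.4 for part (2)) -- results that are themselves grounded in exactly the interpolation and trace theory you describe. You have merely unpacked, using Triebel/Bergh--L\"ofstr\"om instead of Amann, the standard identifications and the subspace-interpolation mechanism that explains the exclusion $2\theta - 1/p \neq 0,1$.
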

\begin{proof} (1) follows from \cite[Theorem 11.6]{Ama}.

(2) follows from \cite[Lemma 14.4]{Ama}.
\end{proof}

Recall the following compact embedding:
\begin{equation}
\label{sobolev-embed-eq1}
W_{p}^{j+m}(D) \hookrightarrow C^{j,\lambda}(\bar{D})
\end{equation}
if  $mp > N > (m-1)p$ and $ 0 < \lambda < m - (N/p)$, and
\begin{equation}
\label{sobolev-embed-eq2}
W_{p}^{2}(D) \hookrightarrow \Vptheta
\end{equation}
\begin{equation}
\label{sobolev-embed-eq3}
V_{p}^{1}(\tilde{d}) \hookrightarrow \Vptheta(\tilde{d})
\end{equation}
for any  $0 \leq \theta < 1$ and $\tilde{a} \in Y$, where $V_p^0,
V_p^0(\tilde d)=L_p(D)$.

Let
\begin{equation*}
A_{(\tilde{c},\tilde{d}),p}(t) := \mathcal{A}(\tilde{c} \cdot
t)|_{V_p^1(\tilde{d} \cdot t)}.
\end{equation*}
Then \eqref{nonauton-eq2}+\eqref{initial-condition} can be written as
\begin{equation}
\label{evolution-eq1}
\begin{cases} u_t = A_{(\tilde{c},\tilde{d}),p}(t) u \\
u(0) = u_0.
\end{cases}
\end{equation}

\begin{definition}
\label{lp-solution-def}
$u = u(t,x)$ is called an {\em $L_p$-solution\/} of
\textup{\eqref{nonauton-eq2}+\eqref{initial-condition}} if it is a
solution of the evolution equation \eqref{evolution-eq1} in $L_p(D)$.
\end{definition}
\begin{definition}
\label{classical-solution-def} $u = u(t,x)$ defined on $(t_0,t_1)
\times \bar{D}$, $t_0 < t_1$, is a {\em classical solution\/} of
\eqref{nonauton-eq2} on $(t_0,t_1)$ if it is continuous on
$(t_0,t_2)\times \bar D$,  it satisfies the differential equation in
\eqref{nonauton-eq2} for all $t \in (t_0,t_1)$ and all $x \in D$, and
it satisfies the boundary conditions for all $t \in (t_0,t_1)$ and
all $x \in \p D$.
\end{definition}

The following existence result follows from~\cite[Theorem~15.1]{Ama}.

\begin{proposition}
\label{existence-prop} For each $(\tilde{c},\tilde{d}) \in Y$ and
each $u_0 \in L_{p}(D)$ there exists a unique $L_{p}(D)$-solution
$U_{(\tilde{c},\tilde{d}),p}(\cdot,0)u_0 \colon [0,\infty) \to
L_{p}(D)$ of~\textup{\eqref{nonauton-eq2}+\eqref{initial-condition}}.
\end{proposition}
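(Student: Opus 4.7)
The plan is to apply Amann's abstract existence theorem (Theorem~15.1 of~\cite{Ama}) to the Cauchy problem \eqref{evolution-eq1} in the base space $L_p(D)$. That theorem produces a unique $L_p$-solution once one has a family of generators $\{A_{(\tilde{c},\tilde{d}),p}(t)\}_{t \in \RR}$ of $L_p$-analytic semigroups, with time-regular coefficients and a compatible family of boundary operators, in the sense of Amann's $(V_p^1, V_p^0)$-regular elliptic boundary value problem setup. So the work reduces to verifying Amann's hypotheses for the family coming from~\eqref{nonauton-eq2}.

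First, I would collect the pointwise-in-$t$ regularity and structural assumptions from (A1)--(A6) and from Lemma~\ref{lemma:properties-of-Y}(iii)--(iv): the domain $D$ has boundary of class $C^{3+\alpha}$, the leading coefficients $a_{ij}$ and first-order coefficients $a_i$ are $C^2(\bar D)$ with uniform ellipticity, the boundary coefficients $b_i$ are $C^2(\p D)$ with $\sum_i b_i \nu_i \ge \alpha_1 > 0$ (the complementing condition for the Neumann/Robin case), and $\tilde c \in C^{2+\alpha,1+\alpha}(\RR \times \bar D)$, $\tilde d \in C^{2+\alpha,3+\alpha}(\RR \times \p D)$ with the bounds uniform over $(\tilde c, \tilde d) \in Y$. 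From these, standard $L_p$ elliptic theory gives, for each fixed $t \in \RR$, that $A_{(\tilde c, \tilde d), p}(t)$ is a closed densely defined operator on $L_p(D)$ with domain $V_p^1(\tilde d \cdot t)$ which generates an analytic semigroup, and whose resolvent satisfies the sector estimates Amann requires.

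Next, I would verify the time-regularity input of Theorem~15.1. Here the $C^{2+\alpha,1+\alpha}$-regularity of $\tilde c$ gives H\"older continuity in $t$ of the zero-order term in $\mathcal{A}(\tilde c \cdot t)$, while the $C^{2+\alpha,3+\alpha}$-regularity of $\tilde d$ gives H\"older continuity in $t$ of the Robin coefficient in $\mathcal{B}(\tilde d \cdot t)$. Translated via the interpolation-extrapolation scale built from $(L_p(D), V_p^1(\tilde d \cdot t))$ (cf.\ Proposition~\ref{X-theta-spaces-prop}), this yields H\"older continuity in $t$ of the operator-valued map $t \mapsto A_{(\tilde c, \tilde d), p}(t)$ in the sense required by Amann. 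With these ingredients verified, Theorem~15.1 of~\cite{Ama} supplies a unique $L_p$-solution $[0,\infty) \ni t \mapsto U_{(\tilde c, \tilde d), p}(t,0)u_0 \in L_p(D)$ of \eqref{evolution-eq1} for every $u_0 \in L_p(D)$, which is precisely the statement of the proposition.

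The main technical obstacle is that the domain $V_p^1(\tilde d \cdot t)$ of $A_{(\tilde c, \tilde d), p}(t)$ varies with $t$ through the Robin coefficient $\tilde d(t, \cdot)$, so one cannot appeal to the simpler constant-domain parabolic evolution theory. The point of citing Amann's framework rather than, say, the Kato--Tanabe or Sobolevskii theorems, is exactly that Amann's interpolation-extrapolation machinery is designed to accommodate time-dependent boundary conditions. Once the H\"older estimates above are expressed in his abstract form, the existence-uniqueness conclusion is immediate; the Dirichlet and Neumann cases are easier because the domain is then independent of $t$ and only $\tilde c$ contributes the time-dependent perturbation.
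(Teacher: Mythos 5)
Your proposal matches the paper's approach: the paper's entire proof is a one-sentence citation of Theorem~15.1 of~\cite{Ama}, which is exactly the theorem you invoke. You have simply filled in the routine verification of Amann's hypotheses (ellipticity, complementing condition, time-H\"older regularity of the coefficients, and the interpolation-extrapolation scale to handle the $t$-dependent domain in the Robin case), which the paper leaves implicit.
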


It follows from the uniqueness of $L_p$-solutions that the following
{\em cocycle property\/} for the solution operator holds:
\begin{equation}
\label{cocycle0}
U_{(\tilde{c},\tilde{d}),p}(t+s,0) = U_{(\tilde{c},\tilde{d}) \cdot
s, p}(t,0) U_{(\tilde{c},\tilde{d}),p}(s,0) \qquad \text{for any }
(\tilde{c},\tilde{d}) \in Y, \ s, t \ge 0.
\end{equation}

We collect now the regularity properties of the $L_{p}(D)$-solutions
which will be useful in the sequel.
\begin{proposition}
\label{evolution-op-in-smooth-case-prop1}
For any $1 < p < \infty$, $(\tilde{c},\tilde{d}) \in Y$ and $u_0 \in
L_2(D)$ there holds $U_{(\tilde{c},\tilde{d}),2}(t,0)u_0 \in
V_{p}^{1}(\tilde{d} \cdot t)$ for $t > 0$.  Moreover, for any fixed
$0 < t_1 \le t_2$ there is $C_{p} = C_{p}(t_1,t_2) > 0$ such that
\begin{equation*}
\lVert U_{(\tilde{c},\tilde{d}),2}(t,0)
\rVert_{L_{2}(D),W_{p}^{2}(D)} \le C_{p}
\end{equation*}
for all $(\tilde{c},\tilde{d}) \in Y$ and $t_1 \le  t \le t_2$.
\end{proposition}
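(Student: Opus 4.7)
The plan is a parabolic smoothing bootstrap: starting from $u_0\in L_2(D)$, I successively raise the integrability in a finite number of small time-steps using smoothing estimates and Sobolev embeddings, and then use one last step to land in $V_p^1(\tilde d\cdot t)$. The uniformity in $(\tilde c,\tilde d)\in Y$ is inherited from the uniform bounds on the coefficients provided by Lemma~\ref{lemma:properties-of-Y}.

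The key input, drawn from Amann's theory in~\cite{Ama}, is that for each $1<q<\infty$ and each $0\le\theta\le 1$ there is a smoothing estimate of the form
\[
\bigl\lVert U_{(\tilde c,\tilde d),q}(s,0)\bigr\rVert_{L_q(D),V_q^\theta}\le K\,s^{-\theta},\qquad 0<s\le t_2,
\]
where $V_q^\theta=W_q^{2\theta}$ by Proposition~\ref{X-theta-spaces-prop}(1) and, at $\theta=1$, $V_q^1$ is understood as $V_q^1(\tilde d\cdot s)$. Since $a_{ij}, a_i, b_i$ are fixed and the $C^{2+\alpha,1+\alpha}$- and $C^{2+\alpha,3+\alpha}$-norms of $\tilde c,\tilde d$ are bounded uniformly on $Y$, together with the uniform ellipticity (A5) and complementing condition (A6), one can choose $K=K(q,\theta,t_2)$ independently of $(\tilde c,\tilde d)\in Y$. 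Uniqueness of $L_q$-solutions (Proposition~\ref{existence-prop}), together with $L_q(D)\subset L_2(D)$ for $q\ge 2$ on the bounded domain $D$, implies that $U_{(\tilde c,\tilde d),2}(s,0)$ and $U_{(\tilde c,\tilde d),q}(s,0)$ coincide on $L_q(D)$; we drop the subscript accordingly.

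Now fix $p>1$. Choose a finite increasing sequence $q_0=2<q_1<\cdots<q_k$ with $q_k\ge p$, together with exponents $\theta_i\in(0,1)$ close enough to $1$ so that $W_{q_i}^{2\theta_i}(D)\hookrightarrow L_{q_{i+1}}(D)$, which requires only $2\theta_i>N(1/q_i-1/q_{i+1})$. For $t\in[t_1,t_2]$ divide $[0,t]$ into $k+1$ equal subintervals of length $\tau:=t/(k+1)\ge t_1/(k+1)>0$. Iteratively applying the smoothing estimate and the cocycle property~\eqref{cocycle0} --- and using Lemma~\ref{lemma:properties-of-Y}(ii) to conclude that $(\tilde c,\tilde d)\cdot j\tau\in Y$ for each $j$ --- yields a composition bound
\[
\bigl\lVert U_{(\tilde c,\tilde d)}(k\tau,0)\bigr\rVert_{L_2(D),L_{q_k}(D)}\le C,
\]
uniform over $Y$ and over $t\in[t_1,t_2]$. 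For the final subinterval $[k\tau,(k+1)\tau]$, one more application of the smoothing estimate at level $q_k$ and $\theta=1$ lands the solution in $V_{q_k}^1(\tilde d\cdot t)$; since $q_k\ge p$ and $D$ is bounded, $W_{q_k}^2(D)\hookrightarrow W_p^2(D)$, and the boundary condition $\mathcal{B}(\tilde d\cdot t)u=0$ is preserved, so $U_{(\tilde c,\tilde d)}(t,0)u_0\in V_p^1(\tilde d\cdot t)$ with the desired uniform norm bound.

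The principal obstacle is verifying the uniformity in $Y$ of the smoothing constants $K$, especially in the Robin case where the domain $V_q^1(\tilde d\cdot s)$ of the generator itself depends on the point of $Y$ and on time. The ingredients --- uniform ellipticity, uniform complementing condition, and uniform H\"older bounds on $\tilde c$, $\tilde d$ over $Y$ --- are precisely those that enter Amann's constants, so a careful inspection of his proofs yields the required uniformity. A secondary technical point is ensuring the consistency of the different $L_{q_i}$-evolution operators used in the bootstrap, which follows from the uniqueness part of Proposition~\ref{existence-prop}.
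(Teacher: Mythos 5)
Your proposal is correct and takes essentially the same route as the paper: both proofs rest on Amann's $L_q$-to-$W^2_q$ smoothing estimate $\lVert U_{(\tilde c,\tilde d),q}(s,0)\rVert_{L_q,W^2_q}\le C_q/s$ uniformly over $Y$ (the paper cites \cite[Lemma 6.1 and Theorem 14.5]{Ama}) combined with a Sobolev-embedding bootstrap through a finite increasing chain of Lebesgue exponents, using the cocycle property to split a time interval into pieces. The only cosmetic differences are that the paper's chain terminates once $2p_l>N$ and then makes one final jump via $W^2_{p_l}\hookrightarrow C(\bar D)\hookrightarrow L_p$ followed by an $L_p$-level smoothing step, whereas you terminate at $q_k\ge p$ and conclude via $V^1_{q_k}(\tilde d\cdot t)\subset V^1_p(\tilde d\cdot t)$; and the paper treats $1<p\le 2$ and the $N\le 4$ case as separate trivial branches while your chain absorbs them as degenerate cases of length $0$ or $1$.
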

\begin{proof}
First of all, by  \cite[Lemma~6.1 and~Theorem~14.5]{Ama}, for any $1
< p < \infty$, any $(\tilde{c},\tilde{d}) \in Y$,  and any $u_0 \in
L_p(D)$,
\begin{equation}
\label{smoothness-of-solution1}
U_{(\tilde{c},\tilde{d}),p}(t,0)u_0 \in V_p^1(\tilde{d} \cdot
t)\quad \text{for } t > 0.
\end{equation}
Moreover, for any $t_2 > 0$, there is $C_p = C_p(t_2) > 0$ such that
\begin{equation}
\label{smoothness-of-solution2}
\lVert U_{(\tilde{c},\tilde{d}),p}(t,0)
\rVert_{L_{p}(D),W_{p}^{2}(D)} \le \frac{C_p}{t}
\end{equation}
for all $(\tilde{c},\tilde{d}) \in Y$ and $0 < t \le t_2 $.

Next, note that if $1 < p \le 2$, then we have $L_2(D) \subset
L_p(D)$, $V_2^1(\tilde{d} \cdot t)\subset V_p^1(\tilde{d} \cdot t)$,
and $W_2^2(D) \subset W_p^2(D)$.  The proposition then follows from
\eqref{smoothness-of-solution1} and \eqref{smoothness-of-solution2}.

Now, assume $2 < p < \infty$. If $4 \ge N$, then by Sobolev
embeddings (see \cite[Theorem 6.2]{Ada}, we have
\begin{equation}
\label{smoothness-of-solution3}
W_2^2(D)\hookrightarrow C(\bar{D}).
\end{equation}
Then it follows with the help of~\eqref{smoothness-of-solution2} that
$U_{(\tilde{c},\tilde{d}),2}(t/2,0)u_0 \in L_p(D)$ for all $t > 0$.
\eqref{smoothness-of-solution1} gives that
$U_{(\tilde{c},\tilde{d}),2}(t,0)u_0 \in V_p^1(\tilde{d} \cdot t)$
for all $t > 0$.  We estimate
\begin{align*}
& \lVert U_{(\tilde{c},\tilde{d}),2}(t,0)
\rVert_{L_{2}(D),W_{p}^{2}(D)} \\
 \le& \tilde{C} \lVert U_{(\tilde{c},\tilde{d}) \cdot
(t_1/2),p}(t-t_1/2,0) \rVert_{L_p(D),W_p^2(D)} \cdot \lVert
U_{(\tilde{c},\tilde{d}),2}(t_1/{2},0)u_0 \rVert_{L_2(D),W_2^2(D)} \\
 \le& \tilde{C} \cdot \frac{C_p(t_2-t_1/2)}{t-t_1/2} \cdot
\frac{C_2(t_1/2)}{t_1/2}
\end{align*}
for all $(\tilde{c},\tilde{d}) \in Y$ and $t_1 \le  t \le t_2$, where
$\tilde{C}$ denotes the norm of the embedding $W_2^2(D)
\hookrightarrow L_p(D)$.  Hence the proposition also holds.

Finally, assume $p > 2$ and $N > 4$. There are $l \in \NN$ and $p_0 =
2 < p_1 < p_2 < \dots < p_l$ such that $p_{i-1} < p_i <
\frac{Np_{i-1}}{N-2p_{i-1}}$ for $i = 1, 2, \dots, l$, and $2p_l
> N$.  For any $\delta > 0$, let $0 = \tau_0 < \tau_1 < \tau_2 < \dots
< \tau_l = \frac{\delta}{2}$.  By \eqref{smoothness-of-solution2},
\begin{equation*}
\lVert U_{(\tilde{c},\tilde{d}) \cdot \tau_i,p_i}
(\tau_{i+1}-\tau_i,0) \rVert_{L_{p_i}(D),W_{p_i}^2(D)} \le
\frac{C_{p_i}(\tau_{i+1}-\tau_i)}{\tau_{i+1}-\tau_i}
\end{equation*}
for $i = 0, 1, 2, \dots, l-1$.  By Sobolev embeddings (see
\cite[Theorem 6.2]{Ada}),
\begin{equation*}
W_{p_i}^2(D) \hookrightarrow L_{p_{i+1}}(D)
\end{equation*}
for $i = 0, 1, 2, \dots, l-1$.  We then have
\begin{equation}
\label{smoothness-aux}
\lVert U_{(\tilde{c},\tilde{d}) \cdot
\tau_{i-1},p_i}(\tau_{i+1}-\tau_i,0)
\rVert_{L_{p_i}(D),L_{p_{i+1}}(D)} \le \tilde{C}_{p_i}
\end{equation}
for some $\tilde{C}_{p_i} > 0$.  Further, since $p_l > \frac{N}{2}$,
by Sobolev embeddings (see \cite[Theorem 6.2]{Ada}),
\begin{equation*}
W_{p_l}^2(D) \hookrightarrow C(\bar{D}).
\end{equation*}
Consequently, we have an embedding $W_{p_l}^2(D) \hookrightarrow
L_p(D)$ (denote its norm by $\bar{C}$).  It then follows that for any
$u_0 \in L_2(D)$,
\begin{align*}
U_{(\tilde{c},\tilde{d}),2}(\tau_l,0)u_0 & =U_{(\tilde{c},\tilde{d})
\cdot \tau_1, p_1}(\tau_l-\tau_1,0)U_{(\tilde{c},\tilde{d}),2}(\tau_1,0)u_0 \\
& = U_{(\tilde{c},\tilde{d}) \cdot
\tau_{l-1},p_{l-1}}(\tau_l-\tau_{l-1},0) \,
U_{(\tilde{c},\tilde{d}) \cdot
\tau_{l-2},p_{l-2}}(\tau_{l-1}-\tau_{l-2},0) \\
& \quad \dots U_{(\tilde{c},\tilde{d}) \cdot
\tau_1,p_1}(\tau_2-\tau_1,0)
U_{(\tilde{c},\tilde{d}),2}(\tau_1,0)u_0 \\
& \in L_p(D).
\end{align*}
This implies, via~\eqref{smoothness-of-solution1}, that
\begin{equation*}
U_{(\tilde c,\tilde d),2}(t,0)u_0 \in V_p^1(\tilde{d} \cdot t)
\end{equation*}
for any $t \ge \delta$ (and hence for any $t > 0$, since $\delta > 0$
is arbitrary).  Now we take $\delta = t_1$.  It follows
from~\eqref{smoothness-aux} and \eqref{smoothness-of-solution2} that
\begin{equation*}
\lVert U_{(\tilde{c},\tilde{d}),2}(t,0)
\rVert_{L_{2}(D),W_{p}^{2}(D)} \le \bar{C} \tilde{C}_{p_0}  \dots
\tilde{C}_{p_{l-1}} \frac{C_{p}(t_2-t_1/2)}{t_1/2}
\end{equation*}
for all $(\tilde{c},\tilde{d}) \in Y$ and $t_1 \le t \le t_2$.
\end{proof}
\begin{proposition}
\label{evolution-op-in-smooth-case-prop2} Suppose that $2\theta -
1/p \notin \NN$.  Then for any $t \ge 0$ and $u_0 \in
\Vptheta(\tilde{d})$ there holds $U_{(\tilde{c},\tilde{d}),p}(t,0)u_0
\in \Vptheta(\tilde{d} \cdot t)$. Moreover, for any $T > 0$ there is
$C_{p,\theta} = C_{p,\theta}(T) > 0$ such that
\begin{equation*}
\normVptheta{U_{(\tilde{c},\tilde{d}),p}(t,0)u_0} \le C_{p,\theta}
\normVptheta{u_0}
\end{equation*}
for any $(\tilde{c},\tilde{d}) \in Y$, $0 \le t \le T$, and $u_0 \in
\Vptheta(\tilde{d})$.
\end{proposition}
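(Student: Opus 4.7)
The plan is to obtain the estimate by interpolation between two endpoint bounds. The assumption $2\theta - 1/p \notin \NN$ ensures, via Proposition~\ref{X-theta-spaces-prop}(2), that both $\Vptheta(\tilde{d})$ and $\Vptheta(\tilde{d} \cdot t)$ are realised as closed subspaces of $\Vptheta = W_p^{2\theta}(D)$, so the interpolation couples $(L_p(D), V_p^1(\tilde{d}))$ and $(L_p(D), V_p^1(\tilde{d} \cdot t))$ both produce well-defined intermediate spaces that carry the subspace norm inherited from $\Vptheta$.

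First I would verify the endpoint $\theta = 0$, namely
\begin{equation*}
\lVert U_{(\tilde{c},\tilde{d}),p}(t,0) \rVert_{L_p(D),L_p(D)} \le M(T)
\end{equation*}
uniformly in $(\tilde{c},\tilde{d}) \in Y$ and $t \in [0,T]$. This is embedded in Amann's construction in~\cite{Ama}: the uniform $C^{2+\alpha,1+\alpha}$ and $C^{2+\alpha,3+\alpha}$ control of $\tilde{c}$ and $\tilde{d}$ over $Y$ furnished by Lemma~\ref{lemma:properties-of-Y}(iii)--(iv), together with the ellipticity and complementing conditions (A5)--(A6), produces uniform parabolic constants, so the analytic evolution family generated by $\{A_{(\tilde{c},\tilde{d}),p}(t)\}$ has uniform exponential-type bounds on finite intervals.

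Next I would address the endpoint $\theta = 1$: for $u_0 \in V_p^1(\tilde{d})$,
\begin{equation*}
\lVert U_{(\tilde{c},\tilde{d}),p}(t,0) u_0 \rVert_{W_p^2(D)} \le K(T) \lVert u_0 \rVert_{W_p^2(D)}
\end{equation*}
uniformly in $Y$ and $t \in [0,T]$, with image automatically in $V_p^1(\tilde{d} \cdot t)$ by Proposition~\ref{existence-prop} and the matching boundary condition at $t = 0$. This is where I expect the main obstacle to lie: the moving Robin coefficient $\tilde{d}(t,\cdot)$ precludes a direct appeal to a time-independent maximal regularity theorem. The standard device is to subtract a lifting $w(t)$ built from $\tilde{d} \cdot t$ and the trace of $u_0$, so that $v(t) := u(t) - w(t)$ solves a parabolic problem with homogeneous boundary condition and an inhomogeneity controlled in $L_p$; Amann's $L_p$ maximal regularity then supplies the $W_p^2$-bound, with uniformity in $Y$ inherited from the uniform bounds on $\tilde{d}$ and its first- and second-order derivatives provided by Lemma~\ref{lemma:properties-of-Y}(vi).

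Having both endpoint estimates, I would invoke the interpolation property of $(\cdot,\cdot)_{\theta,p}$ (when $2\theta \notin \NN$) or of $[\cdot,\cdot]_\theta$ (when $2\theta \in \NN$): the bounded linear map $U_{(\tilde{c},\tilde{d}),p}(t,0)$ between the compatible couples $(L_p(D), V_p^1(\tilde{d}))$ and $(L_p(D), V_p^1(\tilde{d} \cdot t))$, both embedded in $L_p(D)$, induces a bounded linear map $\Vptheta(\tilde{d}) \to \Vptheta(\tilde{d} \cdot t)$ with norm at most $M(T)^{1-\theta} K(T)^\theta$. Because $\Vptheta(\tilde{d} \cdot t)$ carries the subspace norm of $\Vptheta$, this yields the stated bound with $C_{p,\theta}(T) := M(T)^{1-\theta} K(T)^\theta$, uniformly in $(\tilde{c},\tilde{d}) \in Y$ and $t \in [0,T]$.
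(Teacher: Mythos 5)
The interpolation scaffolding in your last paragraph is sound: if you had uniform endpoint bounds for $U_{(\tilde{c},\tilde{d}),p}(t,0)$ as a map $L_p(D) \to L_p(D)$ and as a map $V_p^1(\tilde{d}) \to V_p^1(\tilde{d}\cdot t)$, then the exact interpolation property of $(\cdot,\cdot)_{\theta,p}$ and $[\cdot,\cdot]_{\theta}$ applied to the two compatible couples would indeed give the claimed $\Vptheta(\tilde{d}) \to \Vptheta(\tilde{d}\cdot t)$ bound with constant $M^{1-\theta}K^{\theta}$. That is, in fact, precisely the mechanism underlying~\cite[Theorem 7.1]{Ama} (evolution operators acting on an interpolation--extrapolation scale), which — together with~\cite[Theorem 14.5]{Ama} identifying the scale with $\Vptheta(\tilde{d})$ — is what the paper simply cites.

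The gap is in your $\theta = 1$ endpoint, and it is not cosmetic. First, the lifting device you describe is the classical one for an \emph{inhomogeneous boundary datum}, but here $\tilde{\mathcal{B}}(t)u = 0$ is already homogeneous; the difficulty is a \emph{time-dependent boundary operator}. Subtracting a lift of the trace of $u_0$ does not freeze $\tilde{d}(t,\cdot)$, and if you instead try to reduce to the fixed operator $\tilde{\mathcal{B}}(0)$ by writing $g(t) := (\tilde{d}(0,\cdot)-\tilde{d}(t,\cdot))\,u(t)|_{\p D}$ and lifting $g$, then $g$ depends on the unknown $u$ and the argument becomes a bootstrap that you have not supplied. (The change of variables $v = e^{Mu^*}u$ used in Section~\ref{strong-monotonicity} makes $\check{d}$ positive, not time-independent, so it does not resolve this either.) Second, even granting a reduction to a fixed-operator problem, $L_p$-in-time maximal regularity yields control of $\lVert u \rVert_{L_p((0,T),W_p^2(D))}$, whereas the endpoint you need for interpolation is the \emph{pointwise-in-time} bound $\sup_{t\in[0,T]}\lVert u(t)\rVert_{W_p^2(D)} \le K\lVert u_0\rVert_{W_p^2(D)}$. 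That stronger statement is exactly the content of the evolution-family theory for nonautonomous parabolic problems on the domain scale — i.e.\ Amann's Theorem 7.1 again. So as written your plan does not close: carried out honestly, its hardest step would require invoking precisely the theorem the paper's one-line proof invokes, and is not an independent or more elementary route to the result.
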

\begin{proof}
See \cite[Theorems~7.1 and~14.5]{Ama}.
\end{proof}

\begin{proposition}
\label{classical-solution-prop} For any $u_0\in L_p(D)$,
$U_{(\tilde{c},\tilde{d}),p}(t,0)u_0$ is a classical solution of
\eqref{nonauton-eq2} on $(0,\infty)$.
\end{proposition}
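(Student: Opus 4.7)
The plan is to bootstrap the $L_p$-regularity of $u(t,\cdot) := U_{(\tilde c, \tilde d), p}(t,0)u_0$ up to classical $C^{1+\alpha/2,\,2+\alpha}$ regularity on every strip $[t_0,T] \times \bar D$ with $0 < t_0 < T$, and then read off the PDE and boundary conditions pointwise.

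First, for an arbitrary $t_0 > 0$, I would combine the cocycle identity \eqref{cocycle0} with the $L_p$-smoothing estimate \eqref{smoothness-of-solution2} and the Sobolev stepping-up chain already constructed in the proof of Proposition \ref{evolution-op-in-smooth-case-prop1}: a finite sequence of embeddings $W_{p_i}^{2}(D) \hookrightarrow L_{p_{i+1}}(D)$ pushes $u(t_0/2,\cdot)$ into $L_q(D)$ for any preassigned $q < \infty$. The smoothing property \eqref{smoothness-of-solution1} then gives $u(t,\cdot) \in V_q^1(\tilde d \cdot t) \subset W_q^2(D)$ uniformly for $t \in [t_0,T]$, and for $q > N$ the compact embedding \eqref{sobolev-embed-eq1} delivers $u(t,\cdot) \in C^{1,\lambda}(\bar D)$ for some $\lambda \in (0,1)$, uniformly in $t \in [t_0,T]$.

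Joint continuity of $u$ on $(0,\infty) \times \bar D$ follows from this uniform spatial H\"older bound together with continuity of $t \mapsto u(t,\cdot)$ into $L_q(D)$ (inherent in the abstract $L_p$-formulation), via a standard Arzel\`a--Ascoli argument along sequences $t_n \to t$. At this stage I would regard $u$ as a solution of the linear parabolic problem $u_t = \sum_{i,j=1}^{N} a_{ij}\,\p_i\p_j u + \sum_{i=1}^{N} a_i\,\p_i u + f$, $\tilde{\mathcal B}(t)u = 0$, with forcing $f(t,x) := \tilde c(t,x)\, u(t,x)$. By Lemma \ref{lemma:properties-of-Y}(iii)--(iv) the coefficients $\tilde c, \tilde d$ have all the smoothness required for classical parabolic Schauder theory (see \cite{Fri1}, \cite{LaSoUr}, \cite{Lie}), and $u$ is already H\"older continuous in $(t,x)$ by Steps~1--2. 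Interior and boundary Schauder estimates therefore upgrade $u$ to $C^{1+\alpha/2,\,2+\alpha}((t_0,T) \times \bar D)$, so the differential equation in \eqref{nonauton-eq2} is satisfied pointwise on $(t_0,T) \times D$. The boundary identity $\tilde{\mathcal B}(t)u(t,\cdot) = 0$ holds classically because $u(t,\cdot) \in V_q^1(\tilde d \cdot t)$ and the trace is now pointwise. Since $t_0$ and $T$ were arbitrary, $u$ is a classical solution on $(0,\infty)$.

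The main obstacle I anticipate is not the Schauder step itself but bridging the gap between the abstract $L_p$-solution framework and classical pointwise differentiability; the bridge is the smoothing chain of the parabolic semigroup, which after passing through sufficiently many Sobolev stages deposits $u$ inside $C^{1,\lambda}(\bar D)$, from where linear Schauder theory takes over cleanly. This bridge is essentially already built in the proof of Proposition \ref{evolution-op-in-smooth-case-prop1}, so the present argument is mostly a careful reassembly of results already in hand, plus one appeal to Schauder estimates.
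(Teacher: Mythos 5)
The paper's proof is a single citation: combine Proposition~\ref{evolution-op-in-smooth-case-prop1} with Amann's abstract regularity result \cite[Corollary~15.3]{Ama}, which asserts directly that, under the smoothness assumptions in force, $L_p$-solutions are classical. Your proposal replaces this citation with a hands-on Schauder bootstrap, which is a genuinely different route and would in principle be more self-contained; however, as written it has a gap at the crucial step.

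The gap is in the passage from continuity to H\"older continuity in time. Steps~1--2 yield that $u(t,\cdot)\in W_q^2(D)\hookrightarrow C^{1,\lambda}(\bar D)$ with bounds uniform on $[t_0,T]$, and you then deduce \emph{joint continuity} of $u$ on $(0,\infty)\times\bar D$ via Arzel\`a--Ascoli. But a few lines later you write that ``$u$ is already H\"older continuous in $(t,x)$ by Steps~1--2'' and feed $f=\tilde c\,u$ into Schauder estimates. Joint continuity plus a uniform \emph{spatial} H\"older bound does not give joint H\"older continuity in $(t,x)$: nothing in Steps~1--2 controls the modulus of continuity in $t$. Since the classical interior and boundary Schauder estimates (\cite{Fri1}, \cite{LaSoUr}, \cite{Lie}) require the inhomogeneity to be H\"older in the parabolic metric, the conclusion does not yet follow. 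The missing ingredient is a temporal regularity estimate, for instance: on $[t_0,T]$, $u_t = A_q(t)u$ is bounded in $L_q(D)$ (because $u$ is bounded in $W_q^2(D)$ and the coefficients are bounded), so $t\mapsto u(t,\cdot)$ is Lipschitz into $L_q(D)$; interpolating this with the uniform $W_q^2$-bound gives H\"older continuity in $t$ into $W_q^{2\theta}(D)\hookrightarrow C^1(\bar D)$ for suitable $\theta$ and $q>N$, from which joint H\"older continuity of $u$ and hence of $f=\tilde c\,u$ follows. (Equivalently, one could insert an intermediate $L_q$-parabolic $W_q^{2,1}$-estimate and use the parabolic Sobolev embedding.) With that bridge supplied, your Schauder argument closes, and is a fine alternative to the paper's reliance on \cite[Corollary~15.3]{Ama}; without it, the proof as stated does not establish the needed regularity of the forcing term.
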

\begin{proof}
It follows from Proposition \ref{evolution-op-in-smooth-case-prop1}
and \cite[Corollary~15.3]{Ama}).
\end{proof}
Proposition \ref{classical-solution-prop} allows us to write
$U_{(\tilde{c},\tilde{d})}(t,0)u_0$ ($t > 0$) instead of
$U_{(\tilde{c},\tilde{d}),p}(t,0)u_0$.  In case of~\eqref{random-eq1}
we write $U_{\omega}(t,0)$ instead~of
$U_{(c^{\omega},d^{\omega)}}(t,0)$.

\smallskip
\begin{definition}
\label{global-solution-def}
A {\em global solution\/} of~\eqref{nonauton-eq2} is a classical
solution of~\eqref{nonauton-eq2} on $(-\infty,\infty)$.
\end{definition}
Observe that $v = v(t,x)$ is a global solution of
\eqref{nonauton-eq2} if and only if
\begin{equation*}
U_{(\tilde{c},\tilde{d}) \cdot t}(s,0)v(t,\cdot) =
v(t+s,\cdot)\quad\text{for any }t \in \RR \text{ and any }s \ge 0.
\end{equation*}
\vskip2ex
From now on, we assume (A7).  For any sequence $(\tilde{c}^{(n)},
\tilde{d}^{(n)})_{n=1}^{\infty} \subset Y$, we write
$\lim_{n\to\infty}(\tilde{c}^{(n)}$, $\tilde{d}^{(n)}) =
(\tilde{c},\tilde{d})$ if $(\tilde{c}^{(n)},\tilde{d}^{(n)})$
converges to $(\tilde{c},\tilde{d})$ in $Y$ as $n \to \infty$ (here
the convergence is uniform in the space variable and uniform  on
compact sets in the time variable).  We then present various
continuous dependence propositions.

\begin{proposition}[Joint continuity]
\label{joint-continuity-in-X-theta}
 For any sequence
$((\tilde{c}^{(n)},\tilde{d}^{(n)}))_{n=1}^{\infty} \subset Y$, any
sequence $(t_n)_{n=1}^{\infty} \subset (0,\infty)$ and any sequence
$(u_n)_{n=1}^{\infty} \subset L_2(D)$, if
$\lim_{n\to\infty}(\tilde{c}^{(n)},\tilde{d}^{(n)})$ $=
(\tilde{c},\tilde{d})$, $\lim_{n\to\infty}t_n = t$, where $t > 0$,
and $\lim_{n\to\infty}u_n = u_0$ in $L_2(D)$, then the following
holds.
\begin{itemize}
\item[(1)]
$U_{(\tilde{c}^{(n)},\tilde{d}^{(n)})}(t_n$, $0)u_n$ converges in
$\Vptheta$ to $U_{(\tilde{c},\tilde{d})}(t,0)u_0$, where $0 \le
\theta < 1$ and $1 < p < \infty$ with $2\theta - 1/p \notin \NN$.
\item[(2)]
$U_{(\tilde{c}^{(n)},\tilde{d}^{(n)})}(t_n,0)u_n$ converges in
$C^1(\bar{D})$ to $U_{(\tilde{c},\tilde{d})}(t,0)u_0$.
\end{itemize}
\end{proposition}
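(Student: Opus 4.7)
\emph{Proof plan.} The strategy is a two-step reduction: first obtain joint continuity at the $L_{2}(D)$-level using \textup{(A7)} together with time-continuity of the limiting classical solution, then upgrade to $V_{p}^{\theta}$ and $C^{1}(\bar{D})$ via the smoothing estimate of Proposition~\ref{evolution-op-in-smooth-case-prop1} combined with compact Sobolev embeddings and a Urysohn-type subsequence argument.

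For the $L_{2}$-step, decompose the difference as the telescoping sum
\begin{align*}
& U_{(\tilde{c}^{(n)},\tilde{d}^{(n)})}(t_n,0)u_n - U_{(\tilde{c},\tilde{d})}(t,0)u_0 \\
={}& U_{(\tilde{c}^{(n)},\tilde{d}^{(n)})}(t_n,0)(u_n-u_0) \\
& + \bigl(U_{(\tilde{c}^{(n)},\tilde{d}^{(n)})}(t_n,0) - U_{(\tilde{c},\tilde{d})}(t_n,0)\bigr)u_0 \\
& + \bigl(U_{(\tilde{c},\tilde{d})}(t_n,0) - U_{(\tilde{c},\tilde{d})}(t,0)\bigr)u_0.
\end{align*}
The first summand tends to zero in $L_{2}(D)$ because $u_n \to u_0$ in $L_{2}$ and the operator norms $\norm{U_{(\tilde{c}^{(n)},\tilde{d}^{(n)})}(t_n,0)}$ are bounded uniformly in $n$ (by \textup{(A7)} on $[0,T]$ with $T>\sup_n t_n$). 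The second vanishes by \textup{(A7)}, whose crucial feature is that the convergence is uniform in $t\in[0,T]$. The third vanishes because $U_{(\tilde{c},\tilde{d})}(\cdot,0)u_0$ is a classical solution (Proposition~\ref{classical-solution-prop}), hence continuous in $t>0$ into $L_{2}(D)$.

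To upgrade, fix $0<\tau_{0}<t<T$ with $t_n \in [\tau_{0},T]$ for all large $n$. Proposition~\ref{evolution-op-in-smooth-case-prop1} then provides the uniform bound
\begin{equation*}
\sup_{n} \normVptheta{U_{(\tilde{c}^{(n)},\tilde{d}^{(n)})}(t_n,0)u_n}_{W_{p}^{2}(D)} \le C_{p}(\tau_{0},T)\sup_{n}\norm{u_n},
\end{equation*}
valid for every $1<p<\infty$. Since the embedding $W_{p}^{2}(D) \hookrightarrow W_{p}^{2\theta}(D)=V_{p}^{\theta}$ is compact for any $\theta<1$ (Rellich--Kondrachov together with Proposition~\ref{X-theta-spaces-prop}(1)), every subsequence of $\bigl(U_{(\tilde{c}^{(n)},\tilde{d}^{(n)})}(t_n,0)u_n\bigr)$ has a sub-subsequence converging in $V_{p}^{\theta}$ to some limit $v$; since $V_{p}^{\theta} \hookrightarrow L_{1}(D)$ and the $L_{2}$-limit of the full sequence is $U_{(\tilde{c},\tilde{d})}(t,0)u_0$, uniqueness of $L_{1}$-limits forces $v = U_{(\tilde{c},\tilde{d})}(t,0)u_0$. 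The Urysohn subsequence criterion then yields convergence of the entire sequence in $V_{p}^{\theta}$, proving (1). For (2) repeat with $p>N$ and use the compact embedding $W_{p}^{2}(D) \hookrightarrow C^{1,\lambda}(\bar{D}) \hookrightarrow C^{1}(\bar{D})$ from \eqref{sobolev-embed-eq1} (the second embedding being compact by Arzel\`a--Ascoli).

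The genuine content of the argument is \textup{(A7)}: it is precisely what passes $L_{2}$-level continuity through the family of solution operators. The subsequent upgrade to $V_{p}^{\theta}$ and $C^{1}(\bar{D})$ is a routine parabolic-smoothing-plus-compactness exercise. The main point to watch is the uniformity in $n$ of the estimates; this is why the interval-valued formulation of \textup{(A7)}---as opposed to pointwise-in-$t$ continuity---is essential in handling the second term of the telescoping sum, where $t_n$ is moving rather than fixed.
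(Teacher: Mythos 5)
Your proof is correct and follows essentially the same route as the paper: pass to $L_{2}$-convergence via a telescoping decomposition in which (A7) handles the coefficient-varying term and time-continuity of the solution handles the $t_n\to t$ term, then upgrade to $V_{p}^{\theta}$ and $C^{1}(\bar{D})$ by combining the uniform $W_{p}^{2}$-bound of Proposition~\ref{evolution-op-in-smooth-case-prop1} with compact Sobolev embeddings and a subsequence-uniqueness argument. The only substantive difference is in the identification step: the paper invokes a continuous embedding $V_{p}^{\theta}\hookrightarrow L_{2}(D)$ (which for small $p$ and $\theta$ is not available) whereas you identify limits in $L_{1}(D)$, which works unconditionally on a bounded domain — a small but genuine improvement in rigor.
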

\begin{proof}
(1) Proposition~\ref{evolution-op-in-smooth-case-prop1} and
Eq.~\eqref{sobolev-embed-eq2} imply that there is a subsequence
$(n_k)_{k=1}^{\infty}$ such that
$U_{(\tilde{c}^{(n_k)},\tilde{d}^{(n_k)})}(t_{n_k},0)u_{n_k}$
converges, as $k \to \infty$, in $V_{p}^{\theta}$ to some $u^{*}$.
Note that
\begin{equation*}
\norm{U_{(\tilde{c},\tilde{d})}(t_n,0)u_0 -
U_{(\tilde{c},\tilde{d})}(t,0)u_0} \to 0
\end{equation*}
and
\begin{equation*}
\norm{U_{(\tilde{c},\tilde{d})}(t_n,0)u_n -
U_{(\tilde{c},\tilde{d})}(t_n,0)u_0} \to 0.
\end{equation*}
By~(A7) we have that
\begin{equation*}
\norm{U_{(\tilde{c}_n,\tilde{d}_n)}(t_n,0)u_n -
U_{(\tilde{c},\tilde{d})}(t_n,0)u_n} \to 0
\end{equation*}
and hence
\begin{equation*}
\norm{U_{(\tilde{c}_n,\tilde{d}_n)}(t_n,0)u_n -
U_{(\tilde{c},\tilde{d})}(t,0)u_0} \to 0
\end{equation*}
as $n \to \infty$.  As $V_{p}^{\theta}$ embeds continuously in
$L_2(D)$, we  must have  $u^{*} = U_{(\tilde{c},\tilde{d})}(t,0)u_0$
and the sequence
$U_{(\tilde{c}^{(n)},\tilde{d}^{(n)})}(t_{n},0)u_{n}$ converges, as
$n \to \infty$, in $\Vptheta$, to
$U_{(\tilde{c},\tilde{d})}(t,0)u_0$.

(2) It follows by (1)  and Eq.~\eqref{sobolev-embed-eq1}.
\end{proof}

\begin{proposition}[Norm continuity]
\label{norm-continuity-prop-in-X-theta}
\begin{itemize}
\item[(1)]
Let $1 < p < \infty$ and $2\theta - 1/p \not\in \NN$.  The
mapping
\begin{equation*}
[\,Y \times (0,\infty) \ni ((\tilde{c},\tilde{d}),t) \mapsto
U_{(\tilde{c},\tilde{d})}(t,0) \in \mathcal{L}(L_2(D),\Vptheta)\,]
\end{equation*}
is continuous.
\item[(2)] The mapping
\begin{equation*}
[\,Y \times (0,\infty) \ni ((\tilde{c},\tilde{d}),t) \mapsto
U_{(\tilde{c},\tilde{d})}(t,0) \in
\mathcal{L}(L_2(D),C^1(\bar{D}))\,]
\end{equation*}
is continuous.  Moreover, for any $t > 0$ and any
$(\tilde{c},\tilde{d}) \in Y$ the linear operator
$U_{(\tilde{c},\tilde{d})}(t,0)$ is compact \textup{(}completely
continuous\textup{)}.
\end{itemize}
\end{proposition}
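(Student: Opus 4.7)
The strategy for part (1) is a contradiction argument that combines (A7), the smoothing estimate of Proposition~\ref{evolution-op-in-smooth-case-prop1}, and compactness of the Sobolev embedding $W_{p}^{2}(D) \hookrightarrow \Vptheta$ (which holds because $\theta < 1$). Two facts do the work: (i) for any $0 < t_1 < t_2$, $\|U_{(\tilde c, \tilde d)}(s,0)\|_{L_2(D), W_p^2(D)}$ is bounded uniformly in $(\tilde c, \tilde d) \in Y$ and $s \in [t_1, t_2]$ (Proposition~\ref{evolution-op-in-smooth-case-prop1}); (ii) the same proposition applied with $p=2$, combined with $\p_s U_{(\tilde c, \tilde d)}(s,0) u_0 = \mathcal{A}(\tilde c \cdot s) U_{(\tilde c, \tilde d)}(s, 0) u_0$, shows that the map $s \mapsto U_{(\tilde c, \tilde d)}(s, 0) \in \mathcal{L}(L_2(D),L_2(D))$ is locally Lipschitz on $(0, \infty)$ with Lipschitz constant depending only on the compact subinterval. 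Together with (A7) this yields $\norm{U_{(\tilde c_n, \tilde d_n)}(t_n, 0) - U_{(\tilde c, \tilde d)}(t, 0)} \to 0$ whenever $(\tilde c_n, \tilde d_n) \to (\tilde c, \tilde d)$ in $Y$ and $t_n \to t > 0$.

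Fix such sequences, set $S_n := U_{(\tilde c_n, \tilde d_n)}(t_n, 0) - U_{(\tilde c, \tilde d)}(t, 0)$, and suppose for contradiction that $\|S_n\|_{L_2(D), \Vptheta} \not\to 0$. After passing to a subsequence there exist $\ep > 0$ and unit vectors $u_n \in L_2(D)$ with $\normVptheta{S_n u_n} \ge \ep$. By (i), the sequence $(S_n u_n)$ is bounded in $W_p^2(D)$ for large $n$; the compact embedding $W_p^2(D) \hookrightarrow \Vptheta$ yields a further subsequence along which $S_n u_n \to w^*$ in $\Vptheta$. By (ii), $\norm{S_n u_n} \le \norm{S_n} \to 0$, so $S_n u_n \to 0$ in $L_2(D)$. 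Since $\Vptheta = W_p^{2\theta}(D) \subset L_p(D)$ and $D$ is bounded, comparing the convergences in $L_p(D)$ and $L_2(D)$ (both imply convergence in measure on $D$) forces $w^* = 0$, contradicting $\normVptheta{S_n u_n} \ge \ep$.

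Part (2) is then a quick corollary: pick $p > N$ and $\theta \in (0, 1)$ with $2\theta > 1 + N/p$ and $2\theta - 1/p \notin \NN$. The Sobolev embedding \eqref{sobolev-embed-eq1} gives $\Vptheta = W_p^{2\theta}(D) \hookrightarrow C^1(\bar D)$ continuously, so norm-continuity from (1) passes to $\mathcal{L}(L_2(D), C^1(\bar D))$. Compactness of each $U_{(\tilde c, \tilde d)}(t, 0) \in \mathcal{L}(L_2(D), C^1(\bar D))$ follows by factoring $L_2(D) \to W_p^2(D) \hookrightarrow C^1(\bar D)$: the first arrow is bounded by Proposition~\ref{evolution-op-in-smooth-case-prop1} and the second is compact by Rellich--Kondrachov for $p > N$.

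The main point of difficulty is converting the pointwise convergence already available from Proposition~\ref{joint-continuity-in-X-theta} into uniform convergence over the unit ball of $L_2(D)$: strong operator convergence need not imply norm convergence, but here the extra smoothing regularity sends the unit ball of $L_2(D)$ into a relatively compact subset of $\Vptheta$, which is precisely what makes the subsequence extraction succeed. A secondary technical point is obtaining the norm-continuity in $t$ of $s \mapsto U_{(\tilde c, \tilde d)}(s, 0)$ in $\mathcal{L}(L_2(D), L_2(D))$, which (A7) does not assert directly but which follows cleanly from Proposition~\ref{evolution-op-in-smooth-case-prop1} as in (ii) above.
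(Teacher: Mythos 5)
Your proof is correct and runs on the same backbone as the paper's: contradiction, use the smoothing estimate of Proposition~\ref{evolution-op-in-smooth-case-prop1} together with the compact embedding $W_p^2(D)\hookrightarrow V_p^\theta$ to extract a $\Vptheta$-convergent subsequence from the supposedly bad unit vectors, and then use $L_2$-convergence to identify the limit and reach a contradiction. The one place you genuinely depart from the paper is in how you establish $L_2$-continuity in the time variable. The paper handles this by appealing a second time to the joint continuity of Proposition~\ref{joint-continuity-in-X-theta}: it extracts an auxiliary convergent subsequence of $U_{(\tilde c,\tilde d)}(t/2,0)u_n$ in $\Vptheta$ and factors through $t/2$ to pass to the limit in $L_2$ for the specific vectors $u_n$. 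You instead prove a stronger and cleaner statement: $s\mapsto U_{(\tilde c,\tilde d)}(s,0)$ is locally Lipschitz in $\mathcal{L}(L_2(D),L_2(D))$, with constant uniform over $Y$, obtained by writing $\p_s U(s,0)u_0=\mathcal{A}(\tilde c\cdot s)U(s,0)u_0$, bounding $\mathcal{A}(\tilde c\cdot s)$ as a second-order operator with bounded coefficients from $W_2^2(D)$ into $L_2(D)$, and invoking the $p=2$ version of the smoothing estimate. Combined with (A7) this yields operator-norm $L_2$-convergence $\|S_n\|\to 0$ directly, sparing you a second layer of subsequence extraction. This is a valid and slightly more self-contained route; it uses the fact (from Amann's $L_p$-solution theory) that $t\mapsto U(t,0)u_0$ is $C^1$ into $L_2(D)$ on $(0,\infty)$, which is available here. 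Your Part (2) is handled exactly as in the paper: embed $\Vptheta\hookrightarrow C^1(\bar D)$ by choosing $p>N$ and $\theta$ appropriately, and factor the compactness through $L_2(D)\to W_p^2(D)\hookrightarrow C^1(\bar D)$ via Rellich--Kondrachov.
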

\begin{proof}
(1) Assume that $(\tilde{c}^{(n)},\tilde{d}^{(n)})$ converges to
$(\tilde{c},\tilde{d})$ in $Y$ and that $t_n$ converges to $t > 0$.
Suppose to the contrary that
\begin{equation*}
\lVert U_{(\tilde{c}^{(n)},\tilde{d}^{(n)})}(t_n,0) -
U_{(\tilde{c},\tilde{d})}(t,0) \rVert_{L_2(D),\Vptheta} \not\to 0
\end{equation*}
as $n \to \infty$.  Then there are $\epsilon_0 > 0$ and a sequence
$(u_n)_{n=1}^{\infty} \subset L_2(D)$ with $\norm{u_n} = 1$ such that
\begin{equation*}
\normVptheta{U_{(\tilde{c}^{(n)},\tilde{d}^{(n)})}(t_n,0)u_n -
U_{(\tilde{c},\tilde{d})}(t,0)u_n} \ge \epsilon_0
\end{equation*}
for all $n$.  By Proposition~\ref{evolution-op-in-smooth-case-prop1},
there are $u^*$, $u^{**} \in \Vptheta$ such that (after possibly
extracting a subsequence)
\begin{equation*}
U_{(\tilde{c}^{(n)},\tilde{d}^{(n)})}(t_n,0)u_{n} \to u^{*}
\end{equation*}
and
\begin{equation*}
U_{(\tilde{c},\tilde{d})}(t,0)u_{n} \to u^{**}
\end{equation*}
in $\Vptheta$, as $n \to \infty$. Without loss of generality, we may
assume that  there is $\tilde u^*\in V_p^\theta$ such that
\begin{equation*}
U_{(\tilde{c},\tilde{d})}(t/2,0)u_n \to \tilde{u}^*
\end{equation*}
in $V_p^\theta$ as $n \to \infty$.  Then by Proposition
\ref{joint-continuity-in-X-theta}, we have
\begin{align*}
&\norm{U_{(\tilde{c},\tilde{d})}(t_n,0)u_n -
U_{(\tilde{c},\tilde{d})}(t,0)u_n} \\
 =& \norm{U_{(\tilde{c},\tilde{d}) \cdot
{t/2}}(t_n-t/2,0)U_{(\tilde{c},\tilde{d})}(t/2,0)u_n -
U_{(\tilde{c},\tilde{d}) \cdot
{t/2}}(t/2,0)U_{(\tilde{c},\tilde{d})}(t/2,0)u_n} \\
 \to& \norm{U_{(\tilde{c},\tilde{d}) \cdot t/2}(t/2,0) \tilde{u}^* -
U_{(\tilde{c},\tilde{d}) \cdot t/2}(t/2,0) \tilde{u}^*} = 0
\end{align*}
as $n \to \infty$.  By the property (A7) we have
\begin{equation*}
\norm{U_{(\tilde{c}^{(n)},\tilde{d}^{(n)})}(t_n,0) -
U_{(\tilde{c},\tilde{d})}(t_n,0)} \to 0
\end{equation*}
as $n \to \infty$.  Then we must have $u^{*} = u^{**}$, hence
\begin{equation*}
\normVptheta{U_{(\tilde{c}^{(n)},\tilde{d}^{(n)})}(t_{n},0)u_{n} -
U_{(\tilde{c},\tilde{d})}(t,0)u_{n}} \to 0
\end{equation*}
as $n \to \infty$, a contradiction.

(2) It follows by (1) and Eq.~\eqref{sobolev-embed-eq1}.
\end{proof}

We are now ready to construct the skew-product semiflow  on $X$ ($X$
is as in \eqref{X-eq}) generated by \eqref{nonauton-eq1} or
\eqref{random-eq1}.  For $t \ge 0$, $(\tilde{c},\tilde{d}) \in Y$,
$u_0 \in X$, put
\begin{equation}
\label{skew-product-semiflow-in-X}
\Pi_t(u_0,(\tilde{c},\tilde{d})) = \Pi(t;u_0,(\tilde{c},\tilde{d}))
:= (U_{(\tilde{c},\tilde{d})}(t,0)u_0,(\tilde{c},\tilde{d}) \cdot
t).
\end{equation}
$\Pi = \{\,\Pi_{t}\,\}_{t \ge 0}$ satisfies the usual algebraic
properties of a semiflow on $X$ : $\Pi_0$ equals the identity on $X$,
and $\Pi_{t} \circ \Pi_{s} = \Pi_{s+t}$ for any $s, t \ge 0$.
Moreover, the continuity of $\Pi$ restricted to $(0,\infty) \times X
\times Y$ follows by Proposition~\ref{joint-continuity-in-X-theta}
and the embedding $X \hookrightarrow L_2(D)$.  (However, we need not
have continuity at $t = 0$.)

\smallskip
Sometimes we write $U_{(\tilde{c},\tilde{d})}(t,s)$ instead of
$U_{(\tilde{c},\tilde{d}) \cdot s}(t-s,0)$, $s \le t$.  The semigroup
property $\Pi_{t} \circ \Pi_{s} = \Pi_{s+t}$ takes in that notation
the following form (see the cocycle property \eqref{cocycle0}):
\begin{equation}
\label{cocycle}
U_{(\tilde{c},\tilde{d})}(t,r) = U_{(\tilde{c},\tilde{d})}(t,s)
U_{(\tilde{c},\tilde{d})}(s,r), \qquad r \le s \le t.
\end{equation}

\begin{proposition}[Continuity in $C(\bar{D})$ at $t = 0$]
\label{continuity-at-time-0}
Let $\theta \in (1/2,1)$ and $p > 1$ be such that $2\theta - p
\not\in \NN$ and $V_p^{\theta} \hookrightarrow C(\bar{D})$. Then for
any $(\tilde{c},\tilde{d}) \in Y$ and $u_0 \in
V_p^{\theta}(\tilde{d})$,
\begin{equation*}
\lVert U_{(\tilde{c},\tilde{d})}(t,0)u_0 - u_0 \rVert_{C(\bar{D})}
\to 0
\end{equation*}
as $t \to 0$.
\end{proposition}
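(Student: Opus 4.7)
The plan is to combine a uniform bound in $V_p^\theta$ with convergence in a weaker topology (namely $L_p(D)$), and then conclude via compactness. First I would apply Proposition~\ref{evolution-op-in-smooth-case-prop2} with, say, $T = 1$ to obtain
$$\normVptheta{U_{(\tilde c,\tilde d)}(t,0)u_0} \le C_{p,\theta}\,\normVptheta{u_0}$$
for all $t \in [0,1]$, so that the orbit $\{\,U_{(\tilde c,\tilde d)}(t,0)u_0 : 0 \le t \le 1\,\}$ is bounded in $V_p^\theta$.

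Next, since $u_0 \in V_p^\theta(\tilde d) \subset L_p(D)$, the definition of an $L_p$-solution (Definition~\ref{lp-solution-def} combined with Proposition~\ref{existence-prop} and Amann's theory) yields that $t \mapsto U_{(\tilde c,\tilde d),p}(t,0)u_0$ is continuous as a map $[0,\infty) \to L_p(D)$ with value $u_0$ at $t = 0$; hence $U_{(\tilde c,\tilde d)}(t,0)u_0 \to u_0$ in $L_p(D)$ as $t \to 0^+$.

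The key additional ingredient is that the embedding $V_p^\theta \hookrightarrow C(\bar D)$ is actually \emph{compact}. Indeed, by Proposition~\ref{X-theta-spaces-prop}(1) we have $V_p^\theta = W_p^{2\theta}$, and the hypothesis $V_p^\theta \hookrightarrow C(\bar D)$ forces $2\theta > N/p$ strictly; the Sobolev embedding theorem then gives $V_p^\theta \hookrightarrow C^\lambda(\bar D)$ for some $\lambda \in (0,1)$, and $C^\lambda(\bar D)$ embeds compactly into $C(\bar D)$ by Arzelà--Ascoli.

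With these three ingredients in hand, the proof is a routine extraction argument. For any sequence $t_n \to 0^+$, compactness produces a subsequence such that $U_{(\tilde c,\tilde d)}(t_{n_k},0)u_0 \to v^{*}$ in $C(\bar D)$; since the same subsequence converges to $u_0$ in $L_p(D)$, and $C(\bar D)$-convergence on a bounded domain forces $L_p(D)$-convergence, the two limits must agree, whence $v^{*} = u_0$. Because every subsequence has a sub-subsequence with the same limit $u_0$, the whole family converges to $u_0$ in $C(\bar D)$. The main (and really only) obstacle is securing the $L_p$-continuity at $t=0$: it is the one input that is not already packaged in the excerpt and must be read off from Amann's construction of the $L_p$-solution; everything else is compactness plus uniqueness of limit.
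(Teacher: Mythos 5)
Your argument is correct, but it takes a genuinely different route from the paper's. The paper's one-line proof invokes Amann's Theorem~15.1, which (for initial data in the interpolation space $V_p^{\theta}(\tilde d)$) gives continuity of $t \mapsto U_{(\tilde c,\tilde d)}(t,0)u_0$ up to $t = 0$ \emph{already in the $V_p^{\theta}$-topology}; combined with the merely \emph{continuous} embedding $V_p^{\theta} = W_p^{2\theta} \hookrightarrow C(\bar D)$ from \eqref{sobolev-embed-eq1}, this is the whole proof. You instead work with the weaker continuity at $t = 0$ in $L_p(D)$ (which is built into Definition~\ref{lp-solution-def} of an $L_p$-solution, since a solution of the evolution equation \eqref{evolution-eq1} is by definition a continuous $L_p(D)$-valued curve satisfying $u(0) = u_0$), upgrade the bounded orbit in $V_p^{\theta}$ given by Proposition~\ref{evolution-op-in-smooth-case-prop2} to precompactness in $C(\bar D)$ via the compactness of the embedding $W_p^{2\theta} \hookrightarrow C^{0,\lambda}(\bar D) \hookrightarrow C(\bar D)$ (correctly noting that the hypothesis $V_p^\theta \hookrightarrow C(\bar D)$ forces $2\theta > N/p$), and close the argument with uniqueness of limits and the standard subsequence principle. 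What the paper's approach buys is brevity: one direct citation, no compactness needed. What your approach buys is independence from the stronger $V_p^{\theta}$-continuity of the evolution operator at $t = 0$ — you only need the $L_p$-continuity that is part of the very definition of an $L_p$-solution, plus the \emph{a priori} bound already recorded in Proposition~\ref{evolution-op-in-smooth-case-prop2} and the elementary compactness of the Sobolev--H\"older embedding. Both proofs ultimately rest on Amann's parabolic theory, just through different entry points, and yours is a valid, self-contained alternative given the propositions already stated in Section~\ref{skew-product}.
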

\begin{proof}
It follows from \cite[Theorem 15.1]{Ama} and
Eq.~\eqref{sobolev-embed-eq1}.
\end{proof}

Throughout the rest of this paper, we assume (A1)--(A7).

\section{Strong monotonicity and globally positive solutions}
\label{strong-monotonicity}
In this section, we first show that the skew-product semiflow $\Pi_t$
constructed in the previous section is strongly monotone and then
show that \eqref{nonauton-eq2}  has a unique globally positive
solution, which will be used in next section to define the principal
spectrum and principal Lyapunov exponent of \eqref{nonauton-eq1} and
\eqref{random-eq1}.

Let $X$ be as in \eqref{X-eq}. The Banach space $X$ is ordered by the
standard cone
\begin{equation*}
X^{+} := \{\,u \in X: u(x) \ge 0 \text{ for each } x \in D\,\}.
\end{equation*}
The interior $X^{++}$ of $X^+$ is nonempty, where
\begin{equation*}
X^{++} = \{\,u \in X: u(x) > 0 \text{ for } x \in D \text{ and } (\p
u/\p \nu)(x) < 0 \text{ for } x \in \p D \,\}
\end{equation*}
for the Dirichlet boundary conditions, and
\begin{equation*}
X^{++} = \{\,u \in X: u(x) > 0 \text{ for } x \in \bar{D}\,\}
\end{equation*}
for the Neumann or Robin boundary conditions.

For $u_1, u_2 \in X$, we write $u_1 \le u_2$ if $u_2 - u_1 \in
X^{+}$, $u_1 < u_2$ if $u_1 \le u_2$ and $u_1 \ne u_2$, and $u_1 \ll
u_2$ if $u_2 - u_1 \in X^{++}$.  The symbols $\ge$, $>$ and $\gg$ are
used in the standard way.

We proceed now to investigate the strong monotonicity property of the
solution operator $U_{(\tilde{c},\tilde{d})}(t,0)$. When the
equations \eqref{nonauton-eq1} and \eqref{random-eq1} are in
divergence form, the monotonicity of $U_{(\tilde{c},\tilde{d})}(t,0)$
follows from \cite[Theorem 11.6]{Ama3}. But the strong monotonicity
is not included in \cite[Theorem 11.6]{Ama3}.  Though the
monotonicity for equations in non-divergence form can also be proved
by \cite[Theorem 11.6]{Ama3} after verifying certain conditions,
however for convenience we will give a proof for the monotonicity
directly.  We will prove the strong monotonicity by using the strong
maximum principle and the Hopf boundary point principle for classical
solutions.  But before we do that we have to analyze whether the
existing theory (as presented, e.g., in~\cite{Fri1}) can be applied:
notice that in the Robin case $\tilde{d}$ may change sign.  We show
that coefficient can be made nonnegative by an appropriate change of
variables.

Indeed, consider
\begin{equation}
\label{aux-eq}
\begin{cases}
\displaystyle \frac{\p u^{*}}{\p t} = \sum_{i,j=1}^{N}
a_{ij}(x)\frac{\p^2 u^{*}} {\p x_i\p x_j}, & \quad t > -1,\ x \in D, \\[2ex]
\displaystyle \sum_{i=1}^{N} b_{i}(x) \frac{\p u^{*}}{\p x_{i}} +
u^{*} = 0, & \quad t > -1,\ x \in \p D.
\end{cases}
\end{equation}
Let $p > 1$ and $\theta \in (1/2,1)$ be as in
Proposition~\ref{continuity-at-time-0}.  By the $C^{\infty}$ Urysohn
Lemma (see \cite[Lemma 8.18]{Fol}), there is a nonzero $C^{\infty}$
function $u_0 \colon \RR^{N} \to \RR$ such that $0 \le u_0 \le 1$ on
$D$ and $\supp{u_0} \Subset D$.  Then $u_0 \in V_p^{\theta}(1)$.  Let
$u^*(t,x)$ be the solution of \eqref{aux-eq} with $u^*(-1,x) =
u_0(x)$.  By Proposition~\ref{continuity-at-time-0}
\begin{equation*}
\lVert u^{*}(t,\cdot) - u_0 \rVert_{C(\bar{D})} \to 0 \quad
\text{as} \quad t \to -1^{+}.
\end{equation*}
Hence, the function $u^{*}$ is continuous on $[-1,\infty) \times
\bar{D}$ and satisfies, by Proposition~\ref{classical-solution-prop},
the equation in~\eqref{aux-eq} pointwise on $(-1,\infty) \times D$
and the boundary condition in~\eqref{aux-eq} pointwise on
$(-1,\infty) \times \p D$. Consequently, it follows from the strong
maximum principle and the Hopf boundary point principle for parabolic
equations that $u^{*}(t,x) > 0$ for all $t > -1$ and all $x \in
\bar{D}$.

Now, let $v(t,x) := e^{M u^{*}(t,x)}u(t,x)$, where $M$ is a positive
constant (to be determined later).  Then \eqref{nonauton-eq2} becomes
\begin{equation}
\label{general-eq-smooth1}
\begin{cases}
\disp \frac{\p v}{\p t} = \sum_{i,j=1}^{N} a_{ij}(x)\frac{\p^2 v}{\p
x_i \p x_j} + \sum_{i=1}^{N} \check{a}_{i}(x) \frac{\p v}{\p
x_i} + \check{c}(t,x)v, & \quad t > 0,\ x \in D, \\[2ex]
\disp \sum_{i=1}^{N} b_{i}(x) \frac{\p v}{\p x_{i}} + \check{d}(t,x)
v = 0 & \quad t > 0,\ x \in \p D,
\end{cases}
\end{equation}
where
\begin{align*}
\check{a}_i(x) & := a_{i}(x) - M \biggl(\sum_{j=1}^{N}
\Bigl(a_{ij}(x) \frac{\p u^*}{\p x_j} + a_{ji}(x) \frac{\p u^*}{\p
x_j}\Bigr) \biggr),
\\
\check{c}(t,x) &:= \tilde{c}(t,x) - M \sum_{i=1}^{N} a_i(t,x)
\frac{\p u^*}{\p x_i} + M^{2} \sum_{i,j=1}^{N} a_{ij}(x) \frac{\p
u^*}{\p x_i} \frac{\p u^*}{\p x_j}, \\
\check{d}(t,x) &:= \tilde{d}(t,x) + M u^{*}(t,x).
\end{align*}
We see that for any $(\tilde{c},\tilde{d}) \in Y$ and any $T > 0$,
there is $M = M(T) > 0$ such that $\check{d}(t,x) > 0$ for $t \in
[0,T]$ and $x \in \bar{D}$.  Observe that, since the mapping
$[\,[0,\infty) \ni t \mapsto u^{*}(t,\cdot) \in C^{1}(\bar{D})\,]$ is
continuous by Proposition~\ref{joint-continuity-in-X-theta}, the
coefficients $\check{a}_i$ and $\check{c}$ are bounded on $[0,T]
\times \bar{D}$ and the coefficient $\check{d}$ is bounded on $[0,T]
\times \p D$.

Consequently, we have the following result.
\begin{theorem}[Strong monotonicity]
\label{strong-positivity-class}
Let $u_1, u_2 \in L_2(D)$.  If $u_1 \ne u_2$ and $u_1(x) \le u_2(x)$
for a.e.~$x \in D$, then
\begin{itemize}
\item[{\rm (i)}]
\begin{equation*}
(U_{(\tilde{c},\tilde{d})}(t,0)u_1)(x) <
(U_{(\tilde{c},\tilde{d})}(t,0)u_2)(x) \quad \text{for }
(\tilde{c},\tilde{d}) \in Y,\ t
> 0 \text{ and }x \in D
\end{equation*}
and
\begin{equation*}
\frac{\p}{\p \nu}(U_{(\tilde{c},\tilde{d})}(t,0)u_1)(x) >
\frac{\p}{\p \nu} (U_{(\tilde{c},\tilde{d})}(t,0)u_2)(x) \quad
\text{for } (\tilde{c},\tilde{d}) \in Y,\ t > 0 \text{ and }x \in \p
D
\end{equation*}
in the Dirichlet case,
\item[{\rm (ii)}]
\begin{equation*}
(U_{(\tilde{c},\tilde{d})}(t,0)u_1)(x) <
(U_{(\tilde{c},\tilde{d})}(t,0)u_2)(x) \quad \text{for }
(\tilde{c},\tilde{d}) \in Y,\ t
> 0 \text{ and }x \in \bar{D}
\end{equation*}
in the Neumann or Robin case.
\end{itemize}
\end{theorem}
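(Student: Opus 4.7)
By linearity of the solution operator, the theorem reduces to the claim: if $u_0 \in L_2(D)$ satisfies $u_0 \ge 0$ a.e.\ and $u_0 \not\equiv 0$, then $w := U_{(\tilde c,\tilde d)}(\cdot,0)u_0$ is strictly positive in the sense stated. Fix $T > 0$ arbitrary. Following the construction preceding the theorem, pick $M = M(T)$ so that $v(t,x) := e^{M u^{*}(t,x)} w(t,x)$ is a classical solution of \eqref{general-eq-smooth1} on $(0,T) \times D$, with bounded lower-order coefficients $\check a_i, \check c$ and boundary coefficient $\check d > 0$ on $[0,T] \times \p D$. Since $u^{*} > 0$ on $[0,T]\times\bar D$, the correspondence $w \leftrightarrow v$ preserves signs, and at a boundary zero of $w$ one has $\p v/\p \nu = e^{M u^{*}} \p w/\p \nu$, so signs of normal derivatives are preserved as well. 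All remaining work is carried out on $v$.

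First I would show $v \ge 0$. Approximate $u_0$ in $L_2(D)$ by a sequence $u_0^{(n)} \in C_c^{\infty}(D)$ with $u_0^{(n)} \ge 0$. Since $u_0^{(n)}$ vanishes near $\p D$, it belongs to $V_p^{\theta}(\tilde d)$ for each boundary type, so Proposition~\ref{continuity-at-time-0} ensures that $w^{(n)} := U_{(\tilde c,\tilde d)}(\cdot,0) u_0^{(n)}$, and hence $v^{(n)}$, is continuous on $[0,T] \times \bar D$ up to $t=0$. Each $v^{(n)}$ is a classical solution of \eqref{general-eq-smooth1} with bounded coefficients and $\check d \ge 0$ on $\p D$, so the classical parabolic weak maximum principle (Friedman \cite{Fri1}) yields $v^{(n)} \ge 0$, equivalently $w^{(n)} \ge 0$, throughout $[0,T] \times \bar D$. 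Passing to the limit in $L_2(D)$ via Proposition~\ref{norm-continuity-prop-in-X-theta} gives $w(t,\cdot) \ge 0$ a.e.\ for each $t \in (0,T]$, and classical regularity (Proposition~\ref{classical-solution-prop}) upgrades this to $w \ge 0$ pointwise on $(0,T] \times \bar D$.

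The strict inequalities then come from the strong maximum principle and the Hopf boundary point principle applied to $v$. If $v(t_0,x_0) = 0$ at some $(t_0,x_0) \in (0,T] \times D$, the strong maximum principle forces $v \equiv 0$ on $(0,t_0] \times D$, which by the standard $L_p$ strong continuity of the solution family at $t = 0$ would imply $u_0 = 0$, a contradiction. Hence $v > 0$, and so $w > 0$, on $(0,T] \times D$, giving the interior strict inequalities in (i) and (ii). For the Dirichlet case, $v \equiv 0$ on $\p D$, and Hopf gives $\p v/\p \nu < 0$ on $\p D$, which transfers to $\p w/\p \nu < 0$ by the normal-derivative relation noted above. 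For the Neumann or Robin case, suppose $v(t_0,x_0) = 0$ at some $(t_0,x_0) \in (0,T] \times \p D$. Since $v(t_0,\cdot)$ attains its minimum $0$ at $x_0 \in \p D$, its tangential derivatives there vanish, so $\nabla v(t_0,x_0) = (\p v/\p \nu)(t_0,x_0)\,\nu(x_0)$. Hopf then gives $(\p v/\p \nu)(t_0,x_0) < 0$, while the boundary relation $\sum_i b_i (\p v/\p x_i) + \check d\, v = 0$ at $(t_0,x_0)$ combined with assumption (A6), which provides $\sum_i b_i \nu_i \ge \alpha_1 > 0$, yields $0 = \bigl(\sum_i b_i \nu_i\bigr)(\p v/\p \nu)(t_0,x_0) < 0$, a contradiction. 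Hence $v > 0$, and $w > 0$, on all of $\bar D$ for $t \in (0,T]$.

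The principal obstacle is that the classical parabolic maximum and Hopf principles require a solution continuous up to $t = 0$, whereas $u_0$ is only in $L_2(D)$. This is circumvented by approximating $u_0$ by nonnegative $C_c^{\infty}(D)$ data (for which Proposition~\ref{continuity-at-time-0} supplies continuity at $t = 0$), applying the classical principle to the approximants on the transformed equation, and passing to the limit via the continuous dependence results of Section~\ref{skew-product}. The other potential obstacle, the a priori indefinite sign of $\tilde d$ in the Robin case, is already handled by the $u^{*}$-transformation introduced in the paper immediately before the theorem statement, which renders $\check d > 0$ on any prescribed finite time interval.
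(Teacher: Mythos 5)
Your proof is correct and follows essentially the same route as the paper's: the $v = e^{Mu^{*}}w$ change of variables to make the Robin boundary coefficient positive on a finite time interval, approximation of the initial datum by nonnegative compactly supported smooth data for which Proposition~\ref{continuity-at-time-0} supplies continuity at $t=0$, the classical maximum principle and a limit argument (via Proposition~\ref{norm-continuity-prop-in-X-theta}) to obtain nonnegativity, and then the strong maximum principle and the Hopf boundary point principle together with (A6) for the strict inequalities. The only cosmetic differences are that you pass through the weak maximum principle for the approximants where the paper invokes the strong comparison principle, and you rule out an interior zero directly via $L_p$-continuity at $t=0$ rather than first producing a positivity witness $x_n$ at time $T/n$.
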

\begin{proof}
Fix $(\tilde{c},\tilde{d}) \in Y$.  Assume that $u_1, u_2 \in X$ and
$u_1 < u_2$. For any given $T > 0$, in the case of the Robin boundary
conditions let $M > 0$ be such that $\check{d}(t,x) > 0$ for all $t
\in [0,T]$ and all $x \in \bar{D}$, where $\check{d}$ is as in the
reasoning above the statement of the present proposition (in the case
of the Dirichlet or Neumann boundary conditions put $M = 0$).  Define
$v_0(x) := e^{M u^{*}(0,x)}(u_2(x) - u_1(x))$, $x \in \bar{D}$.

Let $\theta \in (1/2,1)$ and $p > 1$ be as in
Proposition~\ref{continuity-at-time-0}.  We claim that there is a
sequence $(v^{(n)})_{n=1}^{\infty} \subset V_p^{\theta}(\check{d})$
such that $v^{(n)}(x) \ge 0$ ($n = 1,2,\dots$, $x \in D$), $v^{(n)}
\not\equiv 0$ ($n = 1,2,\dots$),  and $\lim_{n\to\infty}\norm{v^{(n)}
- v_0} = 0$.

First note that there is a sequence $(v_0^{(n)})_{n=1}^\infty$ of
simple functions such that
\begin{equation*}
0 \le v_0^{(1)}(x) \le v_0^{(2)}(x) \le \dots \le v_0(x) \quad
\text{for~a.e. } x \in D,
\end{equation*}
and
\begin{equation*}
v_0^{(n)}(x) \to v_0(x) \quad \text{as} \quad n \to \infty, \quad
\text{for~a.e. } x \in D,
\end{equation*}
and $v_0^{(n)} \to v_0$ uniformly on any set on which $v_0$ is
bounded.  It is therefore sufficient to prove the claim for the case
that $v_0 = \chi_E$, where $E \subset D$ is a Lebesgue measurable
set.

Now assume $v_0 = \chi_E$, where $E \subset D$ is a Lebesgue
measurable set.  For $\epsilon_n := \frac{1}{4n^2}$, choose a compact
set $K \subset E$ and an open set $U \supset K$ such that $U \Subset
D$, $\abs{E \setminus K} < \epsilon_n$ and $\abs{U \setminus K} <
\epsilon_n$, where here $\abs{\cdot}$ denotes the Lebesgue measure of
a set.  Then, by the $C^{\infty}$ Urysohn Lemma (see \cite[Lemma
8.18]{Fol}), there is a $C^{\infty}$ function $v^{(n)} \colon \RR^{N}
\to \RR$ such that $0 \le v^{(n)} \le 1$ on $D$, $v^{(n)} \equiv 1$
on $K$ and $\supp{v^{(n)}} \subset U$.  It then follows that
\begin{equation*}
\norm{v^{(n)} - v_0} \le \abs{U \setminus K}^{1/2} + \abs{E \setminus
K}^{1/2} < \frac{1}{n} \to 0
\end{equation*}
as $n \to \infty$.  Moreover, since $\supp{v^{(n)}} \subset U \Subset
D$, we also have $v^{(n)} \in V_p^\theta(\check{d})$. The claim is
thus proved.
Denote by $v(t,\cdot;v_0)$ and $v(t,\cdot;v^{(n)})$ the solutions
of~\eqref{general-eq-smooth1} with $v(0,\cdot;v_0) = v_0(\cdot)$ and
$v(0,\cdot;v^{(n)}) = v^{(n)}(\cdot)$ ($n = 1,2,\dots$),
respectively.

By Proposition~\ref{continuity-at-time-0},
\begin{equation*}
\lVert v(t,\cdot;v^{(n)}) - v^{(n)} \rVert_{C(\bar{D})} \to 0
\end{equation*}
as $t \to 0^{+}$.  We can thus apply the strong comparison principle
for parabolic equations to conclude that
\begin{equation*}
v(t,x;v^{(n)}) > 0 \quad \text{for} \quad t \in (0,T],\ x \in D, \ n
= 1,2,\dots.
\end{equation*}
This together with Proposition~\ref{norm-continuity-prop-in-X-theta}
implies that
\begin{equation*}
v(t,x;v_0) \ge 0 \quad \text{for} \quad t \in (0,T],\ x \in D.
\end{equation*}

By Proposition~\ref{classical-solution-prop}, for any $n = 2,3,
\dots$ the function $v(\cdot,\cdot;v_0)$ is continuous on $[T/n,T]$,
satisfies the equation in~\eqref{general-eq-smooth1} pointwise on
$(T/n,T] \times D$ and satisfies the boundary condition
in~\eqref{general-eq-smooth1} pointwise on $(T/n,T] \times \p D$.
Further, from Proposition~\ref{existence-prop} and the nonnegativity
of $v$ it follows that for $n$ sufficiently large there is $x_n \in
D$ such that $v(T/n,x_n;v_0) > 0$.  An application of the strong
maximum principle for parabolic equations gives $v(t,x;v_0) > 0$ for
each $t \in (0,T]$ and each $x \in D$.

In the Dirichlet boundary condition case, suppose to the contrary
that there are $t^{*} \in (0,T]$ and $x^{*} \in \p D$ such that
$\frac{\p}{\p \nu}(v(t^{*},x^{*};v_0)) = 0$.  But this contradicts
the Hopf boundary point principle applied to $v$ restricted to
$[t^{*}/2,t^{*}] \times \bar{D}$.  Hence $\frac{\p}{\p
\nu}(v(t,x;v_0)) < 0$ for any $t \in (0,T]$ and any $x \in \p D$.
This completes the proof in that case, since $v(t,x;v_0) =
(U_{\tilde{a}}(t,0)u_2)(x) - (U_{\tilde{a}}(t,0)u_1)(x)$ for any $t
\in (0,T]$ and $x \in \bar{D}$.

Suppose to the contrary that, in the Neumann or Robin boundary
condition case, there are $t^{*} \in (0,T]$ and $x^{*} \in \p D$ such
that $v(t^{*},x^{*};v_0) = 0$.  It follows from the Hopf boundary
point principle (applied to $v$ restricted to $[t^{*}/2,t^{*}] \times
\bar{D}$) that $\sum_{i=1}^{N} b_{i}(x^{*}) \frac{\p}{\p
x_{i}}v(t^{*},x^{*};v_0) < 0$, which is incompatible with the
boundary condition.  Hence $v(t,x;v_0) > 0$ for all $t \in (0,T]$ and
all $x \in \bar{D}$.  Since $v(t,x;v_0) = e^{M u^{*}(t,x)} (
(U_{\tilde{a}}(t,0)u_2)(x) - (U_{\tilde{a}}(t,0)u_1)(x))$ for any $t
\in (0,T]$ and $x \in \bar{D}$, this completes the proof.

(It is to be remarked that in Eqs.~\eqref{aux-eq}
and~\eqref{general-eq-smooth1} their coefficients may not belong to
$Y$, so {\em formally\/} we cannot apply propositions from
Section~\ref{skew-product} in those cases.  This should not cause any
misunderstanding.)
\end{proof}

By Theorem~\ref{strong-positivity-class} we have the following {\em
strong monotonicity\/}:

\vskip1.5ex {\em For $(\tilde{c},\tilde{d}) \in Y$, $u_1, u_2 \in X$
and $t > 0$, if $u_1 < u_2$ then $U_{(\tilde{c},\tilde{d})}(t,0)u_1
\ll U_{(\tilde{c},\tilde{d})}(t,0)u_2$.}

\vskip3ex The theory of existence and uniqueness of globally positive
solutions can then be extended to our case.  Below, we collect its
basic concepts and facts.

\begin{definition}
\label{globally-positive-def}
For $(\tilde{c},\tilde{d}) \in Y$, we say that a global solution $v =
v(t,x)$ of~\eqref{nonauton-eq2} is a {\em globally positive
solution\/} of~\eqref{nonauton-eq2} if $v(t,x) > 0$ for all $t \in
\RR$ and all $x \in D$.
\end{definition}

We shall consider now the problem of existence of globally positive
solutions.
\begin{theorem}
\label{globally-positive-existence}
There exist
\begin{itemize}
\item
a continuous function $w \colon Y \to X^{++}$,
$\norm{w((\tilde{c},\tilde{d}))} = 1$ for each
$(\tilde{c},\tilde{d}) \in Y$, and
\item
a continuous function $w^{*} \colon Y \to L_2(D)$,
$\norm{w^{*}((\tilde{c},\tilde{d}))} = 1$ for each
$(\tilde{c},\tilde{d}) \in Y$, and such that for each
$(\tilde{c},\tilde{d}) \in Y$, $w^{*}((\tilde{c},\tilde{d}))(x) >
0$ for a.e.~$x \in D$,
\end{itemize}
having the following properties:
\begin{itemize}
\item[{\rm (i)}]
For each $(\tilde{c},\tilde{d}) \in Y$ the function
$v_{(\tilde{c},\tilde{d})} = v(t,x;\tilde{c},\tilde{d})$ given by
\begin{equation}
\label{globally-positive-formula}
v(t,\cdot;\tilde{c},\tilde{d}) :=
\begin{cases}
U_{(\tilde{c},\tilde{d})}(t,0) w((\tilde{c},\tilde{d}))
& \quad \text{for } t \ge 0, \\[1ex]
\disp \frac{w((\tilde{c},\tilde{d}) \cdot
t)}{\norm{U_{(\tilde{c},\tilde{d})\cdot
t}(-t,0)w((\tilde{c},\tilde{d}) \cdot t)}} & \quad \text{for } t <
0,
\end{cases}
\end{equation}
is a globally positive solution of~\eqref{nonauton-eq2}.
\item[{\rm (ii)}]
Let, for some $(\tilde{c},\tilde{d}) \in Y$, $v = v(t,x)$ be a
globally positive solution of~\eqref{nonauton-eq2}.  Then there
exists a constant $\beta > 0$ such that $v(t,x) = {\beta}
v(t,x;\tilde{c},\tilde{d})$ for each $t \in \RR$ and each $x \in
D$.
\item[{\rm (iii)}]
There are constants $C > 0$ and $\mu > 0$ such that
\begin{equation}
\label{exp-sep-formula-1} \norm{U_{(\tilde{c},\tilde{d})}(t,0) u_0}
\le C e^{{-\mu}t} \norm{U_{(\tilde{c},\tilde{d})}(t,0)
w((\tilde{c},\tilde{d}))}
\end{equation}
for any $(\tilde{c},\tilde{d}) \in Y$, $t > 0$ and $u_0 \in
L_2(D)$ with $\norm{u_0} = 1$ and $\langle u_0,
w^{*}((\tilde{c},\tilde{d})) \rangle = 0$.
\item[{\rm (iv)}]
There are constants $C' > 0$ and $\mu > 0$ such that
\begin{equation}
\label{exp-sep-formula-2} \lVert U_{(\tilde{c},\tilde{d})}(t,0) u_0
\rVert_{X} \le C' e^{{-\mu}t} \lVert U_{(\tilde{c},\tilde{d})}(t,0)
w((\tilde{c},\tilde{d})) \rVert_{X}
\end{equation}
for any $(\tilde{c},\tilde{d}) \in Y$, $t \ge 1$ and $u_0 \in
L_2(D)$ with $\norm{u_0} = 1$ and $\langle u_0,
w^{*}((\tilde{c},\tilde{d})) \rangle = 0$.
\end{itemize}
\end{theorem}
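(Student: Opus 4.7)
The plan is to deduce the entire theorem from the general theory of exponential separation for strongly monotone, compact, skew-product semiflows, as developed in \cite{Mi1,Mi2,MiSh1}. All hypotheses of that machinery have been verified in the present setting: $Y$ is a compact metric flow (Lemma \ref{lemma:properties-of-Y}), the cocycle $U_{(\tilde c,\tilde d)}(t,0)$ is jointly continuous and norm continuous into $X$ and is compact for $t>0$ (Propositions \ref{joint-continuity-in-X-theta} and \ref{norm-continuity-prop-in-X-theta}), and it is strongly monotone (Theorem \ref{strong-positivity-class}). The genuinely new feature compared with \cite{MiSh1} is that the spaces $V_p^1(\tilde d\cdot t)$ vary with $t$; however, since the skew-product framework is set on the fixed ambient space $X\times Y$ and strong monotonicity has already been established in this generality, no change to the abstract theory is required.

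First I would construct $w$ by a pullback/normalization argument. Fix any $u_0\in X^{++}$ and for $T>0,\ (\tilde c,\tilde d)\in Y$ set
\begin{equation*}
w_T(\tilde c,\tilde d):=\frac{U_{(\tilde c,\tilde d)\cdot(-T)}(T,0)u_0}{\norm{U_{(\tilde c,\tilde d)\cdot(-T)}(T,0)u_0}}.
\end{equation*}
Strong monotonicity combined with compactness of $U_{(\tilde c,\tilde d)}(t,0)$ makes the evolution on the normalized cone a contraction in Hilbert's part metric at a rate uniform over the compact base $Y$. Hence the limit $w(\tilde c,\tilde d):=\lim_{T\to\infty}w_T(\tilde c,\tilde d)$ exists in $X^{++}$, is independent of the choice of $u_0$, and depends continuously on $(\tilde c,\tilde d)$ by joint continuity. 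Property (i) is then a verification using the cocycle identity \eqref{cocycle}: for $t\ge 0$ positivity comes from strong monotonicity, and for $t<0$ the defining formula together with the pullback property of $w$ shows that $v(t,\cdot;\tilde c,\tilde d)$ satisfies $U_{(\tilde c,\tilde d)\cdot s}(t-s,0)v(s,\cdot;\tilde c,\tilde d)=v(t,\cdot;\tilde c,\tilde d)$ for $s\le t$. For (ii), any other globally positive solution $v$ produces, after normalization at time $-T$, a candidate in the positive cone whose forward image at time $0$ equals a multiple of $v(0,\cdot)$; uniqueness of the pullback limit forces $v(0,\cdot)=\beta w(\tilde c,\tilde d)$, and the cocycle then propagates this proportionality to all $t\in\RR$. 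The dual function $w^*\colon Y\to L_2(D)$ is constructed by the same pullback procedure applied to the adjoint semigroup $U_{(\tilde c,\tilde d)}(t,0)^*$, which inherits compactness and positivity-improving properties; continuity of $w^*$ follows from norm continuity of the adjoint.

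The core of the argument, and the step I expect to be the main obstacle, is the uniform exponential separation (iii). Fiberwise, Krein--Rutman applied to the compact strongly positive operator $U_{(\tilde c,\tilde d)}(t,0)$ gives a simple dominant spectral direction spanned by $w(\tilde c,\tilde d)$; the complementary spectral projection corresponds to $\{u\in L_2(D):\langle u,w^*(\tilde c,\tilde d)\rangle=0\}$. What must be proved is that the spectral gap is uniform in $(\tilde c,\tilde d)\in Y$. The argument is a compactness-contradiction: if no uniform $\mu>0$ existed, one could extract sequences $(\tilde c^{(n)},\tilde d^{(n)})\to(\tilde c,\tilde d)$, $t_n\to\infty$, and $u_n$ with $\norm{u_n}=1$, $\langle u_n,w^*((\tilde c^{(n)},\tilde d^{(n)}))\rangle=0$, for which the ratio in \eqref{exp-sep-formula-1} violates the desired exponential bound; Proposition \ref{norm-continuity-prop-in-X-theta} together with the joint continuity of $w$ and $w^*$ then forces a limit element of the complementary subspace of the limit fiber to grow at the dominant rate, contradicting simplicity.

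Finally, (iv) follows from (iii) by the parabolic smoothing of Proposition \ref{evolution-op-in-smooth-case-prop1}: for $t\ge 1$ write $U_{(\tilde c,\tilde d)}(t,0)=U_{(\tilde c,\tilde d)\cdot 1}(t-1,0)\,U_{(\tilde c,\tilde d)}(1,0)$ and bound the second factor from $L_2(D)$ into $X$ uniformly in $(\tilde c,\tilde d)$, then apply (iii) to the first factor, noting that $\norm{U_{(\tilde c,\tilde d)}(t,0)w((\tilde c,\tilde d))}$ and $\lVert U_{(\tilde c,\tilde d)}(t,0)w((\tilde c,\tilde d))\rVert_X$ are comparable up to constants uniform in $Y$ (again by norm continuity and compactness of $Y$).
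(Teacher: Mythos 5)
Your overall route is genuinely different from the paper's. The paper does not re-derive exponential separation; it introduces the discrete-time skew-product $\Pi_n$ on $X\times Y$, cites the Pol\'a\v{c}ik--Tere\v{s}\v{c}\'ak theorem from \cite{PoTer} to obtain the invariant splitting $X_1\oplus X_2$, the Floquet function $\tilde w$, the dual $\tilde w^*$ and the discrete exponential estimate in one stroke, proves positivity of $w^*$ by a simple-function contradiction, and then produces the continuous-time estimates (iii) and (iv) by sandwiching $t$ between $\lfloor t\rfloor$ and $\lfloor t\rfloor+1$ and absorbing the finite-time behaviour into constants $D_1,\dots,D_5$. For part (ii) (uniqueness of globally positive solutions) the paper likewise cites \cite{HuPoSa2} and \cite{Hu1} rather than arguing from scratch.

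Where your sketch has a real gap is the compactness-contradiction argument for (iii). If a uniform $\mu>0$ fails, you get sequences $(\tilde c^{(n)},\tilde d^{(n)})$, $u_n$, $t_n$ violating \eqref{exp-sep-formula-1}; the finite-time case is handled by norm continuity and compactness of $Y$, but when $t_n\to\infty$ you cannot simply pass to a limit ``element of the complementary subspace of the limit fiber'' that ``grows at the dominant rate,'' because the operators $U(t_n,0)$ do not converge and the contradiction with fiberwise Krein--Rutman simplicity never materializes. Uniform exponential separation over a compact base is a genuinely nonlocal statement --- fiberwise simplicity of the principal direction gives, for each $(\tilde c,\tilde d)$, a gap that may degenerate as you move along $Y$; ruling this out requires either a cocycle subadditivity argument or the full \cite{PoTer} machinery, neither of which your sketch supplies. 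A related under-justified point is the transition from Hilbert projective-metric contraction (which controls directions) to the additive $L_2$-norm bound in \eqref{exp-sep-formula-1}: you need a uniform lower bound on $\langle u, w^*\rangle$ for normalized $u\in X^{++}$ and this is where the continuity and strict positivity of $w^*$ must actually enter, not just its existence. Your pullback construction of $w$ and your treatment of (i) and (iv) are sound, and (iv) is obtained exactly as the paper does it (smoothing from $L_2$ into $X$ over a unit time step); but as written, (ii) and (iii) are resting on a vague appeal to ``the general theory'' rather than on arguments that would go through.
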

\begin{proof}
We start by considering a discrete-time dynamical system on the
product bundle $X \times Y$ ($X$ is a fiber, $Y$ is the base space):
\begin{equation}
\Pi_n(u_0,(\tilde{c},\tilde{d})) :=
(U_{(\tilde{c},\tilde{d})}(n,0)u_0,(\tilde{c},\tilde{d}) \cdot n),
\qquad u_0 \in X,\ (\tilde{c},\tilde{d}) \in Y,\ n = 1,2,3,\dots.
\end{equation}
Proposition~\ref{norm-continuity-prop-in-X-theta} and
Theorem~\ref{strong-positivity-class} allow us to use the results
contained in~\cite{PoTer} to conclude that there are continuous
functions $\tilde{w} \colon Y \to X$, $\tilde{w}^{*} \colon Y \to
X^{*}$, $\lVert \tilde{w}((\tilde{c},\tilde{d}))\rVert_{X} = \lVert
\tilde{w}^{*}((\tilde{c},\tilde{d}))\rVert_{X^{*}} = 1$ for each
$\tilde{a} \in Y$, such that (we write $X_1((\tilde{c},\tilde{d})) :=
\spanned{\tilde{w}((\tilde{c},\tilde{d}))}$,
$X_2((\tilde{c},\tilde{d})) :=
\nullspace{(\tilde{w}^{*}((\tilde{c},\tilde{d})))}$, where
$\nullspace$ stands for the nullspace of an element of $X^{*}$)
\begin{itemize}
\item[(a)]
$\tilde{w}((\tilde{c},\tilde{d})) \in X^{++}$, for each
$(\tilde{c},\tilde{d}) \in Y$.
\item[(b)]
$(v, \tilde{w}^{*}((\tilde{c},\tilde{d})))_{X,X^{*}} > 0$ for
each $(\tilde{c},\tilde{d}) \in X$ and each nonzero $v \in
X^{+}$.  It follows that $X_2((\tilde{c},\tilde{d})) \cap X^{+} =
\{0\}$, for each $(\tilde{c},\tilde{d}) \in Y$.
\item[(c)]
For each $(\tilde{c},\tilde{d}) \in Y$ there is $d_1 =
d_1((\tilde{c},\tilde{d})) > 0$ such that
$U_{(\tilde{c},\tilde{d})}(1,0)\tilde w((\tilde{c},\tilde{d})) =
d_{1}\tilde w((\tilde{c},\tilde{d}) \cdot 1)$.  It follows that
$U_{(\tilde{c},\tilde{d})}(1,0) X_1((\tilde{c},\tilde{d})) =
X_1((\tilde{c},\tilde{d}) \cdot 1)$.
\item[(d)]
For each $(\tilde{c},\tilde{d}) \in Y$ there is $d_1^{*} =
d_1^{*}((\tilde{c},\tilde{d})) > 0$ such that\break
$(U_{(\tilde{c},\tilde{d})}(1,0))^{*} \tilde
w^{*}((\tilde{c},\tilde{d}) \cdot 1) = d_1^{*} \tilde
w^*((\tilde{c},\tilde{d}))$, where
$(U_{(\tilde{c},\tilde{d})}(1,0))^{*} \colon X^{*} \to X^{*}$
stands for the linear operator dual to
$U_{(\tilde{c},\tilde{d})}(1,0)$. It follows that
\break$U_{(\tilde{c},\tilde{d})}(1,0) X_2((\tilde{c},\tilde{d}))
\subset X_2((\tilde{c},\tilde{d}) \cdot 1)$, for any
$(\tilde{c},\tilde{d}) \in Y$.
\item[(e)]
There are constants $\tilde{C} > 0$ and $0 < \gamma < 1$ such
that
\begin{equation}
\label{exp-sep-1}
\lVert U_{(\tilde{c},\tilde{d})}(n,0) u_0 \rVert_{X} \le \tilde{C}
{\gamma}^{n} \lVert U_{(\tilde{c},\tilde{d})}(n,0)
\tilde{w}((\tilde{c},\tilde{d})) \rVert_{X}
\end{equation}
for any $(\tilde{c},\tilde{d}) \in Y$, any $u_0 \in
X_2((\tilde{c},\tilde{d}))$ with $\lVert u_0 \rVert_{X} = 1$ and
any $n \in \NN$.
\end{itemize}
Put $w((\tilde{c},\tilde{d})) := \tilde{w}((\tilde{c},\tilde{d}))/
\norm{\tilde{w}((\tilde{c},\tilde{d}))}$, $(\tilde{c},\tilde{d}) \in
Y$.  As $X$ embeds continuously in $L_2(D)$, the function $w \colon Y
\to X$ is continuous.  Further, put $w^{*}((\tilde{c},\tilde{d})) :=
\tilde{w}^{*}((\tilde{c},\tilde{d}))/
\norm{\tilde{w}^{*}((\tilde{c},\tilde{d}))}$, $(\tilde{c},\tilde{d})
\in Y$.  From Proposition~\ref{norm-continuity-prop-in-X-theta} it
follows that the mapping $[\, Y \ni (\tilde{c},\tilde{d}) \mapsto
(U_{(\tilde{c},\tilde{d})}(1,0))^{*} \in \mathcal{L}(X^{*},L_2(D))
\,]$ is continuous, too, so we obtain with the help of (d) that
$w^{*} \colon Y \to L_2(D)$ is well defined and continuous.

By the definition of the dual operator,
\begin{align*}
d_1^{*}((\tilde{c},\tilde{d})) \cdot (v,
\tilde{w}^{*}((\tilde{c},\tilde{d})))_{X,X^{*}} & = (v,
(U_{(\tilde{c},\tilde{d})}(1,0))^{*}
\tilde{w}^{*}((\tilde{c},\tilde{d}) \cdot 1))_{X,X^{*}} \\
& = (U_{(\tilde{c},\tilde{d})}(1,0)v,
\tilde{w}^{*}((\tilde{c},\tilde{d}) \cdot 1))_{X,X^{*}}
\end{align*}
for each $(\tilde{c},\tilde{d}) \in Y$ and each $v \in X$.  As
$\tilde{w}^{*}((\tilde{c},\tilde{d}))$ is a bounded linear functional
on $L_2(D)$ and $X$ is dense in $L_2(D)$, we conclude that
\begin{align*}
d_1^{*}((\tilde{c},\tilde{d})) \cdot \langle v,
\tilde{w}^{*}((\tilde{c},\tilde{d})) \rangle & = \langle v,
(U_{(\tilde{c},\tilde{d})}(1,0))^{*}\tilde{w}^{*}((\tilde{c},\tilde{d})
\cdot 1) \rangle \\
& = \langle U_{(\tilde{c},\tilde{d})}(1,0)v,
\tilde{w}^{*}((\tilde{c},\tilde{d}) \cdot 1) \rangle
\end{align*}
for each $(\tilde{c},\tilde{d}) \in Y$ and each $v \in L_2(D)$.

We prove now that $w^{*}((\tilde{c},\tilde{d}))(x) > 0$ for a.e.~$x
\in D$, or, which is equivalent, that
$\tilde{w}^{*}((\tilde{c},\tilde{d}))(x) > 0$ for a.e.~$x \in D$.
Suppose first that for some $(\tilde{c},\tilde{d}) \in X$ there are
$D_{+}, D_{-} \subset D$ of positive Lebesgue measure such that
$\tilde{w}^{*}((\tilde{c},\tilde{d}))(x) > 0$ for $x \in D_{+}$,
$\tilde{w}^{*}((\tilde{c},\tilde{d}))(x) < 0$ for $x \in D_{-}$, and
$\tilde{w}^{*}((\tilde{c},\tilde{d}))(x) = 0$ for $x \in D \setminus
(D_{+} \cup D_{-})$.  Define $v \in L_2(D)$ to be the simple function
equal to $1/\int_{D_{+}}\tilde{w}^{*}((\tilde{c},\tilde{d}))(x)\,dx$
on $D_{+}$, equal to
$-1/\int_{D_{-}}\tilde{w}^{*}((\tilde{c},\tilde{d}))(x)\,dx$ on
$D_{-}$, and equal to zero elsewhere.  We have
\begin{multline*}
0 = d_1^{*}((\tilde{c},\tilde{d})) \cdot \langle v,
\tilde{w}^{*}((\tilde{c},\tilde{d})) \rangle = \langle v,
(U_{(\tilde{c},\tilde{d})}(1,0))^{*}
\tilde{w}^{*}((\tilde{c},\tilde{d}) \cdot 1)
\rangle \\
= \langle U_{(\tilde{c},\tilde{d})}(1,0)v,
\tilde{w}^{*}((\tilde{c},\tilde{d}) \cdot 1) \rangle =
(U_{(\tilde{c},\tilde{d})}(1,0)v, \tilde{w}^{*}((\tilde{c},\tilde{d})
\cdot 1))_{X,X^{*}}.
\end{multline*}
By Theorem~\ref{strong-positivity-class},
$U_{(\tilde{c},\tilde{d})}(1,0)v \in X^{++}$.  This contradicts (b).
Suppose now that for some $(\tilde{c},\tilde{d}) \in X$ there are
$D_{+}, D_{0} \subset D$ of positive Lebesgue measure such that
$\tilde{w}^{*}((\tilde{c},\tilde{d}))(x) > 0$ for $x \in D_{+}$ and
$\tilde{w}^{*}((\tilde{c},\tilde{d}))(x) = 0$ for $x \in D_{0}$, and
the complement of the union $D_{+} \cup D_{0}$ in $D$ has Lebesgue
measure zero.  We repeat the above construction, this time with $v$
equal to zero on $D_{+}$ and equal to one on $D_{0}$.

Fix $(\tilde{c},\tilde{d}) \in Y$.  The fact that if there exists a
globally positive solution of~\eqref{nonauton-eq2} then it is unique
up~to multiplication by a positive constant is proved for the
Dirichlet case in \cite{HuPoSa2}, and for the Neumann and Robin case
in~\cite{Hu1}.  We proceed now to the construction of a globally
positive solution.

We define first the trace of a positive solution
$v(t,x;\tilde{c},\tilde{d})$ on $\ZZ$:
\begin{equation*}
v(k,\cdot;\tilde{c},\tilde{d}) :=
\begin{cases}
U_{(\tilde{c},\tilde{d})}(k,0) w((\tilde{c},\tilde{d})) & \quad
\text{for } k = 0,1,2,3, \dots,
\\[1ex]
\disp \frac{w((\tilde{c},\tilde{d}) \cdot
k)}{\norm{U_{(\tilde{c},\tilde{d}) \cdot
k}(-k,0)w((\tilde{c},\tilde{d}) \cdot k)}} & \quad \text{for } k =
\dots, -3, -2, -1.
\end{cases}
\end{equation*}
It follows from (a) and (c) that
\begin{equation}
\label{glob-exist-1} U_{(\tilde{c},\tilde{d})}(l+k,k) v(k,\cdot;
\tilde{c},\tilde{d}) = v(k+l,\cdot; \tilde{c},\tilde{d})
\end{equation}
for any $k \in \ZZ$ and any nonnegative integer $l$.  Also,
$\norm{v(0,\cdot; \tilde{c},\tilde{d})} = 1$. We extend $v$ to a
function defined on $(-\infty,\infty)$ by putting
\begin{equation}
\label{glob-exist-2} v(t,\cdot;\tilde{c},\tilde{d}) :=
U_{(\tilde{c},\tilde{d})}(t,\floor{t})
v(\floor{t},\cdot;\tilde{c},\tilde{d}), \qquad t \in \RR \setminus
\ZZ,
\end{equation}
where $\floor{t}$ denotes the greatest integer less than or equal to
$t$.  To check that the function so defined is indeed a global
solution we need to show that
\begin{equation}
\label{glob-exist-3} v(s+t,\cdot;\tilde{c},\tilde{d}) =
U_{(\tilde{c},\tilde{d})}(s+t,t) v(t,\cdot;\tilde{c},\tilde{d})
\qquad \text{for any } t \in \RR \text{ and any }s \ge 0
\end{equation}
(see Definition~\ref{global-solution-def} and Eq.~\eqref{cocycle}).
We write
\begin{align*}
v(s+t,\cdot;\tilde{c},\tilde{d}) & =
U_{(\tilde{c},\tilde{d})}(s+t,\floor{s+t})
v(\floor{s+t},\cdot;\tilde{c},\tilde{d}) &\text{by \eqref{glob-exist-2}}\\
& = U_{(\tilde{c},\tilde{d})}(s+t,\floor{s+t})
U_{(\tilde{c},\tilde{d})}(\floor{s+t},\floor{t})
v(\floor{t},\cdot;\tilde{c},\tilde{d}) &\text{by \eqref{glob-exist-1}}\\
& = U_{(\tilde{c},\tilde{d})}(s+t,t)
U_{(\tilde{c},\tilde{d})}(t,\floor{t})
v(\floor{t},\cdot;\tilde{c},\tilde{d}) &\text{by \eqref{cocycle}} \\
& = U_{(\tilde{c},\tilde{d})}(s+t,t) v(t,\cdot;\tilde{c},\tilde{d})
&\text{by \eqref{glob-exist-2}}.
\end{align*}
The fact that $v(t,\cdot;\tilde{c},\tilde{d}) \in X^{++}$ for each $t
\in \RR$ is a consequence of the construction of $v$ and of
Theorem~\ref{strong-positivity-class}.

Formula~\eqref{globally-positive-formula} for $t \ge 0$ is
straightforward.  It follows from the uniqueness of globally positive
solutions that
\begin{equation*}
v(t,\cdot;\tilde{c},\tilde{d}) =
\norm{v(t,\cdot;\tilde{c},\tilde{d})} \, w((\tilde{c},\tilde{d})
\cdot t), \qquad t \in (-\infty,\infty).
\end{equation*}
From~\eqref{glob-exist-3} we obtain, for any $t < 0$, that
\begin{align*}
1 = \norm{v(0,\cdot;\tilde{c},\tilde{d})} & =
\norm{U_{(\tilde{c},\tilde{d})\cdot t}(-t,0)
v(t,\cdot;\tilde{c},\tilde{d})} \\
& = \norm{v(t,\cdot;\tilde{c},\tilde{d})} \,
\norm{U_{(\tilde{c},\tilde{d})\cdot t}(-t,0) w((\tilde{c},\tilde{d})
\cdot t)},
\end{align*}
which concludes the proof of
formula~\eqref{globally-positive-formula}.

We proceed now to the proof of part (iii).  Denote by $M_1$ the norm
of the embedding $X \hookrightarrow L_2(D)$.  Moreover, by the
compactness of $Y$ and the continuity of $\tilde{w}$ there is $M_2 >
0$ such that $\lVert \tilde{w}((\tilde{c},\tilde{d})) \rVert_{X} \le
M_2 \norm{w((\tilde{c},\tilde{d}))}$ for all $(\tilde{c},\tilde{d})
\in Y$.

Take $u_0 \in L_2(D)$ such that $\norm{u_0} = 1$ and $\langle u_0,
w^{*}((\tilde{c},\tilde{d})) \rangle = 0$.  It follows from (d) that
$\langle U_{(\tilde{c},\tilde{d})}(1,0)u_0,
w^{*}((\tilde{c},\tilde{d}) \cdot 1) \rangle = 0$.  As
$U_{(\tilde{c},\tilde{d})}(1,0)u_0 \in X$, one has
$U_{(\tilde{c},\tilde{d})}(1,0)u_0 \in X_2(\tilde{a} \cdot 1)$. This
allows us to estimate, for $n = 2,3,4, \dots$,
\begin{align*}
\norm{U_{(\tilde{c},\tilde{d})}(n,0) u_0} & \le M_1 \lVert
U_{(\tilde{c},\tilde{d})}(n,1) (U_{(\tilde{c},\tilde{d})}(1,0)u_0)
\rVert_{X} &  \text{ by \eqref{cocycle}}
\\
& \le M_1 \tilde{C} {\gamma}^{n-1} \lVert
U_{(\tilde{c},\tilde{d})}(n,1) \tilde{w}((\tilde{c},\tilde{d}) \cdot
1) \rVert_{X} \lVert U_{(\tilde{c},\tilde{d})}(1,0)u_0
\rVert_{X} &  \text{ by \eqref{exp-sep-1}} \\
& = \frac{M_1 \tilde{C}}{\gamma} {\gamma}^n \frac{\lVert
U_{(\tilde{c},\tilde{d})}(n,0) \tilde{w}((\tilde{c},\tilde{d}))
\rVert_{X}}{\lVert U_{(\tilde{c},\tilde{d})}(1,0)
\tilde{w}((\tilde{c},\tilde{d})) \rVert_{X}} \lVert
U_{(\tilde{c},\tilde{d})}(1,0)u_0 \rVert_{X} &  \text{ by
\eqref{cocycle}} \\
& \le \frac{M_{1} M_{2} D_{1} \tilde{C}}{D_2 \gamma} {\gamma}^{n}
\norm{U_{(\tilde{c},\tilde{d})}(n,0) w((\tilde{c},\tilde{d}))},
\end{align*}
where $D_1 := \sup\{\,\lVert U_{(\tilde{c},\tilde{d})}(1,0)
\rVert_{L_2(D),X}: (\tilde{c},\tilde{d}) \in Y \,\} < \infty$,
\newline
$D_2 := \inf\{\,\lVert
U_{(\tilde{c},\tilde{d})}(1,0)\tilde{w}((\tilde{c},\tilde{d}))
\rVert_{X}: (\tilde{c},\tilde{d}) \in Y \,\} > 0$.

Clearly,  $\norm{U_{(\tilde{c},\tilde{d})}(1,0) u_0} \le
\frac{M_1M_2D_1}{D_2} \norm{U_{(\tilde{c},\tilde{d})}(1,0)
w((\tilde{c},\tilde{d}))}$ for all $(\tilde{c},\tilde{d}) \in Y$ and
all $u_0 \in L_2(D)$ with $\norm{u_0} = 1$ and $\langle u_0,
w^{*}((\tilde{c},\tilde{d})) \rangle = 0$.

As a consequence we obtain the existence of $\bar{C} = \frac{M_{1}
M_{2} D_{1} }{D_{2} \gamma} \max\{\tilde{C}, 1\}$ such that
\begin{equation}
\label{exp-sep-discrete} \norm{U_{(\tilde{c},\tilde{d})}(n,0) u_0}
\le \bar{C} \gamma^n \norm{U_{(\tilde{c},\tilde{d})}(n,0)
w((\tilde{c},\tilde{d}))}
\end{equation}
for any $(\tilde{c},\tilde{d}) \in Y$, any $n \in \NN$ and any $u_0
\in L_2(D)$ satisfying $\norm{u_0} = 1$ and $\langle u_0,
w^*((\tilde{c},\tilde{d})) \rangle = 0$.

To show \eqref{exp-sep-formula-1} we notice that
\begin{align*}
\norm{U_{(\tilde{c},\tilde{d})}(t,0) u_0} & =
\norm{U_{(\tilde{c},\tilde{d})}(t,\floor{t})
(U_{(\tilde{c},\tilde{d})}(\floor{t},0) u_0)} \\
& \le D_3 \norm{U_{(\tilde{c},\tilde{d})}(\floor{t},0) u_0} \\
& \le D_3 \bar{C} \gamma^{\floor{t}}
\norm{U_{(\tilde{c},\tilde{d})}(\floor{t},0)
w((\tilde{c},\tilde{d}))} \qquad \qquad \qquad \qquad \qquad
\text{by \eqref{exp-sep-discrete}} \\
& \le D_{3} \bar{C} {\gamma}^{\floor{t}}
\frac{1}{\norm{U_{(\tilde{c},\tilde{d}) \cdot
\floor{t}}(t-\floor{t},0) w((\tilde{c},\tilde{d}) \cdot \floor{t})}}
\, \norm{U_{\tilde{a}}(t,0) w((\tilde{c},\tilde{d}))} \\
& \le \frac{D_{3} \bar{C}}{{\gamma} D_4} \gamma^{t}
\norm{U_{(\tilde{c},\tilde{d})}(t,0) w((\tilde{c},\tilde{d}))}
\end{align*}
for any $(\tilde{c},\tilde{d}) \in Y$, $t \ge 1$ and any $u_0 \in
L_2(D)$ with $\norm{u_0} = 1$ and $\langle u_0,
w^*((\tilde{c},\tilde{d})) \rangle = 0$, where $D_3 :=
\sup\{\,\norm{U_{(\tilde{c},\tilde{d})}(t,0)}: t \in [0,1],\
(\tilde{c},\tilde{d}) \in Y\,\} < \infty$ and $D_4 := \inf\{\,
\norm{U_{(\tilde{c},\tilde{d})}(t,0)\tilde{w}((\tilde{c},\tilde{d}))}:
t \in [0,1],\ (\tilde{c},\tilde{d}) \in Y \,\} > 0$.

Clearly, $\norm{U_{(\tilde{c},\tilde{d})}(t,0) u_0} \le
\frac{D_3}{D_4} \norm{U_{(\tilde{c},\tilde{d})}(t,0)
w((\tilde{c},\tilde{d}))}$ for all $(\tilde{c},\tilde{d}) \in Y$, all
$t \in [0,1]$ and all $u_0 \in L_2(D)$ with $\norm{u_0} = 1$ and
$\langle u_0, w^*((\tilde{c},\tilde{d})) \rangle = 0$.

This proves \eqref{exp-sep-formula-1}, with $C = \frac{D_3}{D_4
\gamma} \max\{\bar{C}, 1\}$ and $\mu = -\ln{\lambda}$.

To prove~\eqref{exp-sep-formula-2} we estimate, for $u_0 \in L_2(D)$
with $\norm{u_0} = 1$ and $\langle u_0, w^{*}((\tilde{c},\tilde{d}))
\rangle = 0$, and $t \ge 1$,
\begin{align*}
\lVert U_{(\tilde{c},\tilde{d})}(t,0) u_0 \rVert_{X} & = \lVert
U_{(\tilde{c},\tilde{d})}(t,t-1)
(U_{(\tilde{c},\tilde{d})}(t-1,0)u_0) \rVert_{X} \\
& \le D_{1} \norm{U_{(\tilde{c},\tilde{d})}(t-1,0)u_0} \\
& \le D_{1} C e^{-\mu(t-1)}
\norm{U_{\tilde{a}}(t-1,0)w((\tilde{c},\tilde{d}))}
& \qquad \text{ by \eqref{exp-sep-formula-1}} \\
& \le \frac{D_1 C e^{\mu}}{D_5} e^{-{\mu}t}
\norm{U_{(\tilde{c},\tilde{d})}(t,0)w((\tilde{c},\tilde{d}))} \\
& \le \frac{D_1 M_1 C e^{\mu}}{D_5} e^{-{\mu}t} \lVert
U_{(\tilde{c},\tilde{d})}(t,0)w((\tilde{c},\tilde{d}))\rVert_{X},
\end{align*}
where $M_1 := \sup\{\,\norm{u}: u \in X, \lVert u \rVert_{X} \le
1\,\}$, $D_1 := \sup\{\,\lVert U_{(\tilde{c},\tilde{d})}(1,0)
\rVert_{L_2(D),X}: $ $(\tilde{c},\tilde{d}) \in Y \,\}$ and $D_5 :=
\inf\{\,\norm{U_{(\tilde{c},\tilde{d})}(1,0)w((\tilde{c},\tilde{d}))}:
(\tilde{c},\tilde{d}) \in Y \,\}$.
\end{proof}
For other approaches to the question of existence and/or uniqueness
of globally positive solutions the reader can consult
also~\cite{Hu2}, \cite{HuPo}, \cite{HuPoSa1}, \cite{Mi1}, \cite{Mi2},
\cite{Po}.

\begin{theorem}
\label{pre-bounds-of-w-thm}
\begin{itemize}
\item[{\rm (1)}]
In the Dirichlet boundary condition case, there is $M > 0$ such
that
\begin{equation*}
w((\tilde{c},\tilde{d}))(x) \le M \quad \text{for any } x \in D
\text{ and any } (\tilde{c},\tilde{d}) \in Y.
\end{equation*}
Further, for each compact $D_0 \Subset D$ there is $m = m(D_0) >
0$ such that
\begin{equation*}
w((\tilde{c},\tilde{d}))(x) \ge m(D_0) \quad \text{for any } x \in
D_0 \text{ and any } (\tilde{c},\tilde{d}) \in Y.
\end{equation*}
\item[{\rm (2)}]
In the Neumann or Robin boundary condition case, there are $M, m
> 0$ such that
\begin{equation*}
m \le w((\tilde{c},\tilde{d}))(x) \le M \quad \text{for any } x \in
D \text{ and any } (\tilde{c},\tilde{d}) \in Y.
\end{equation*}
\end{itemize}
\end{theorem}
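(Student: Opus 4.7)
The plan is to derive both bounds from the continuity of $w \colon Y \to X$ established in Theorem~\ref{globally-positive-existence}, together with the compactness of $Y$ and the fact that $X$ embeds continuously in $C(\bar{D})$. Concretely, since $\tilde{w} \colon Y \to X$ is continuous with $\tilde{w}((\tilde{c},\tilde{d})) \in X^{++}$ everywhere, the function $[\,(\tilde{c},\tilde{d}) \mapsto \norm{\tilde{w}((\tilde{c},\tilde{d}))}\,]$ is continuous and strictly positive on the compact space $Y$, hence bounded away from $0$; this gives continuity of $w \colon Y \to X$.

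For the upper bound, I would observe that the continuity of $w \colon Y \to X$, composed with the continuous embedding $X \hookrightarrow C(\bar{D})$, yields continuity of $[\,(\tilde{c},\tilde{d}) \mapsto w((\tilde{c},\tilde{d}))\,] \in C(\bar{D})$. Compactness of $Y$ then produces $M := \sup_{(\tilde{c},\tilde{d}) \in Y} \norminfty{w((\tilde{c},\tilde{d}))} < \infty$, which works uniformly in both cases.

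For the lower bound in the Neumann or Robin case, I would argue by contradiction. Suppose no such $m > 0$ exists; then there are sequences $(\tilde{c}_n,\tilde{d}_n) \in Y$ and $x_n \in \bar{D}$ with $w((\tilde{c}_n,\tilde{d}_n))(x_n) \to 0$. By compactness of $Y$ and $\bar{D}$ pass to subsequences so $(\tilde{c}_n,\tilde{d}_n) \to (\tilde{c}_\infty,\tilde{d}_\infty)$ in $Y$ and $x_n \to x_\infty \in \bar{D}$. Continuity of $w$ into $C(\bar{D})$ yields $w((\tilde{c}_\infty,\tilde{d}_\infty))(x_\infty) = 0$, contradicting $w((\tilde{c}_\infty,\tilde{d}_\infty)) \in X^{++}$, which in the Neumann/Robin case requires strict positivity on all of $\bar{D}$. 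The Dirichlet case is analogous, except that we now restrict $x_n$ to the compact set $D_0 \Subset D$; then $x_\infty \in D_0 \subset D$, so the membership $w((\tilde{c}_\infty,\tilde{d}_\infty)) \in X^{++}$ (which in the Dirichlet case requires strict positivity on $D$) again forces $w((\tilde{c}_\infty,\tilde{d}_\infty))(x_\infty) > 0$, a contradiction.

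I do not expect any step to be a serious obstacle; the entire argument is a soft compactness/continuity argument, and the only point that needs a little care is verifying that the normalization $w = \tilde{w}/\norm{\tilde{w}}$ preserves continuity into $X$ (handled above by bounding $\norm{\tilde{w}((\tilde{c},\tilde{d}))}$ below via compactness of $Y$). Everything else is a direct consequence of $X \hookrightarrow C(\bar{D})$ together with the definitions of $X^{++}$ recalled at the start of Section~\ref{strong-monotonicity}.
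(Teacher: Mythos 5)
Your argument is correct and is exactly the standard compactness/continuity argument the paper has in mind: the paper's proof consists of the single remark that the claim ``follows in a standard way from the compactness of $Y$ and from the fact that $w((\tilde{c},\tilde{d})) \in X^{++}$ for each $(\tilde{c},\tilde{d}) \in Y$,'' and you have simply spelled out the details (continuity of $w$ into $C(\bar{D})$, the supremum for the upper bound, and a sequential compactness contradiction for the lower bound, distinguishing the Dirichlet and Neumann/Robin forms of $X^{++}$).
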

\begin{proof}
It follows in a standard way from the compactness of $Y$ and from the
fact that $w((\tilde{c},\tilde{d})) \in X^{++}$ for each
$(\tilde{c},\tilde{d}) \in Y$.
\end{proof}

\begin{theorem}
\label{Holder-continuity-thm} The first order derivatives of $w$ are
H\"older in $x$ uniformly in $(\tilde{c},\tilde{d}) \in Y$ and in $x
\in \bar{D}$, and the second order derivatives of $w$ are H\"older in
$x$ uniformly in $(\tilde{c},\tilde{d}) \in Y$ and locally uniformly
in $x \in D$.
\end{theorem}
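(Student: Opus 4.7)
The plan is to read off the desired Hölder regularity of $w((\tilde c,\tilde d)) = v(0,\cdot;\tilde c,\tilde d)$ from standard parabolic Schauder estimates applied to the globally positive classical solution $v(\cdot,\cdot;\tilde c,\tilde d)$ on a fixed backward slab $[-T, 0] \times \bar D$, with the critical point being to verify that all Schauder constants are uniform in $(\tilde c,\tilde d) \in Y$.

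First I would establish a uniform $L^{\infty}$ bound
\begin{equation*}
M_1 := \sup_{(\tilde c,\tilde d)\in Y} \lVert v(\cdot,\cdot;\tilde c,\tilde d) \rVert_{L^{\infty}([-T,0]\times \bar D)} < \infty
\end{equation*}
for some fixed $T > 0$. By formula~\eqref{globally-positive-formula}, for $t \in [-T, 0)$ one has
\begin{equation*}
v(t,\cdot;\tilde c,\tilde d) = \frac{w((\tilde c,\tilde d)\cdot t)}{\norm{U_{(\tilde c,\tilde d)\cdot t}(-t,0)\,w((\tilde c,\tilde d)\cdot t)}}.
\end{equation*}
The numerator is bounded in sup norm uniformly over $Y$ by Theorem~\ref{pre-bounds-of-w-thm}. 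For the denominator, the continuity of $w \colon Y \to X$ (Theorem~\ref{globally-positive-existence}) combined with the joint continuity of the evolution operator for $t > 0$ (Proposition~\ref{joint-continuity-in-X-theta}) makes the denominator a continuous, strictly positive function on the compact set $Y \times [-T,0]$ (with value $1$ at $t=0$ along the classical trajectory), hence bounded below by a positive constant uniformly in $Y$.

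Next I would invoke parabolic Schauder estimates. Under (A1)--(A6) and Lemma~\ref{lemma:properties-of-Y} the coefficients of \eqref{nonauton-eq2} satisfy uniform Hölder bounds on $Y$. Applying boundary Schauder estimates (oblique derivative form for Neumann/Robin, Dirichlet form otherwise; see, e.g., Lieberman, \emph{Second Order Parabolic Differential Equations}) to the classical solution $v$ yields
\begin{equation*}
\lVert v \rVert_{C^{(1+\alpha)/2,\,1+\alpha}([-T/2,0]\times\bar D)} \le C_1 \lVert v \rVert_{L^{\infty}([-T,0]\times\bar D)},
\end{equation*}
and, for any pair of compacts $D_0 \Subset D_0' \Subset D$, interior Schauder estimates yield
\begin{equation*}
\lVert v \rVert_{C^{1+\alpha/2,\,2+\alpha}([-T/2,0]\times D_0)} \le C_2(D_0) \lVert v \rVert_{L^{\infty}([-T,0]\times D_0')}.
\end{equation*}
Restricting to $t = 0$ and using $w((\tilde c,\tilde d)) = v(0,\cdot;\tilde c,\tilde d)$, these inequalities give a uniform $C^{1+\alpha}(\bar D)$ bound and a uniform local $C^{2+\alpha}(D_0)$ bound on $w((\tilde c,\tilde d))$ over $Y$, which is precisely the conclusion of the theorem.

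The main obstacle is the uniformity of the Schauder constants $C_1$ and $C_2(D_0)$ in $(\tilde c,\tilde d) \in Y$. This requires citing a form of the parabolic Schauder estimate in which the constant depends only on the ellipticity constant, the geometry of $D$, and uniform Hölder bounds on the coefficients (rather than on the specific coefficients themselves); the hypotheses (A1)--(A6) together with Lemma~\ref{lemma:properties-of-Y} supply exactly such uniform bounds. A secondary technical point is joint continuity of the normalizing denominator in Step~1 on the closed interval including $t = 0$, which is handled by the fact that $v$ is a classical solution continuous on $\RR \times \bar D$ together with the compactness of $Y$.
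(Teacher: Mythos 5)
Your proposal is correct and follows essentially the same route as the paper: the paper represents $w((\tilde c,\tilde d))$ as $U_{(\tilde c,\tilde d)\cdot(-1)}(1,0)\,w((\tilde c,\tilde d)\cdot(-1))$ divided by its $L_2$-norm, argues that the denominators are bounded away from zero uniformly in $Y$ by continuity of $w$, compactness of $Y$, and Proposition~\ref{joint-continuity-in-X-theta}, and then invokes parabolic regularity (Friedman, Theorem~5, Chapter~3) to obtain the uniform Hölder bounds. Your only cosmetic variations are working on a general slab $[-T,0]$ instead of the fixed step $[-1,0]$ (which introduces the small extra task, which you flag, of controlling the normalizing denominator as $t\to 0^-$, handled via Proposition~\ref{continuity-at-time-0} and the normalization $\norm{w}=1$; the paper sidesteps this by fixing the time step at $1$) and citing Lieberman in place of Friedman for the Schauder estimates.
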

\begin{proof}
By Theorem~\ref{globally-positive-existence}, for each
$(\tilde{c},\tilde{d}) \in Y$ there holds
\begin{equation*}
w((\tilde{c},\tilde{d})) = \frac{U_{(\tilde{c},\tilde{d}) \cdot
(-1)}(1,0) w((\tilde{c},\tilde{d}) \cdot (-1))}
{\norm{U_{(\tilde{c},\tilde{d}) \cdot (-1)}(1,0)
w((\tilde{c},\tilde{d}) \cdot (-1))}}.
\end{equation*}
It is a consequence of the continuity of $w$, the compactness of $Y$
and Proposition~\ref{joint-continuity-in-X-theta} that the
denominators on the right-hand side are positive and bounded away
from zero, uniformly in $Y$.  Now we apply the parabolic regularity
estimates~\cite[Theorem 5, Chapter 3]{Fri1}.
\end{proof}

\section{Principal spectrum and principal Lyapunov exponent}
\label{principal-spectrum}

In this section,  we collect the basic concepts and facts about the
principal spectrum and principal Lyapunov exponent of
\eqref{nonauton-eq1} and \eqref{random-eq1}.

\begin{definition}
\label{principal-spectrum-def}
In case of~\eqref{nonauton-eq1} we define its {\em principal
spectrum\/} to be the set of all limits
\begin{equation*}
\lim\limits_{n\to\infty} \frac{\ln\norm{U_{(c,d) \cdot
S_n}(T_n-S_n,0)w((c,d) \cdot S_n)}} {T_n-S_n},
\end{equation*}
where $T_n - S_n \to \infty$ as $n \to \infty$.
\end{definition}

The following proposition follows from the results contained in
\cite{JPSe} (cp., e.g., \cite[Thm.~2.10]{Mi3}).
\begin{theorem}
\label{principal-spectrum-thm2} The principal spectrum
of~\eqref{nonauton-eq1} is a compact interval $[\lambdainf(c,d)$,
$\lambdasup(c,d)]$.  Moreover, if $(c,d)$ is uniquely ergodic and
minimal then $\lambdainf(c,d) = \lambdasup(c,d)$.
\end{theorem}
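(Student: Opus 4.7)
The plan is to recognize $a(t,y) := \ln\norm{U_y(t,0)w(y)}$, $t \geq 0$, $y=(\tilde{c},\tilde{d}) \in Y$, as a continuous additive real cocycle over the compact base flow $(Y,\{\sigma_t\})$ and then to invoke the spectral theorem of Johnson--Palmer--Sell. First I would verify the cocycle identity. Theorem \ref{globally-positive-existence}(ii) (uniqueness up to scalars of the globally positive solution) implies $U_y(s,0)w(y) = \norm{U_y(s,0)w(y)}\, w(\sigma_s y)$, so
\begin{equation*}
U_y(s+t,0)w(y) = U_{\sigma_s y}(t,0)U_y(s,0)w(y) = \norm{U_y(s,0)w(y)}\cdot U_{\sigma_s y}(t,0)w(\sigma_s y);
\end{equation*}
taking norms and logarithms yields $a(s+t,y) = a(s,y) + a(t,\sigma_s y)$. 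Continuity of $a$ on $(0,\infty)\times Y$ follows from continuity of $w\colon Y \to X^{++}\hookrightarrow L_2(D)$ together with Proposition \ref{norm-continuity-prop-in-X-theta}, and positivity of the norm is guaranteed by strong monotonicity (Theorem \ref{strong-positivity-class}).

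Next I would bound the cocycle: since $Y$ is compact and $a(1,\cdot)$ is continuous, $M := \sup_{y\in Y}|a(1,y)|<\infty$, and the cocycle identity gives $|a(n,y)|\le nM$, whence the set of admissible limits in Definition \ref{principal-spectrum-def} is contained in $[-M,M]$. Closedness of this set is a standard diagonal argument, and nonemptiness follows because from any $T_n-S_n\to\infty$ one can extract a subsequence along which $a(T_n-S_n,(c,d)\cdot S_n)/(T_n-S_n)$ converges by Bolzano--Weierstrass.

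Connectedness of the principal spectrum, which is the main content, is the key step. I would quote the Johnson--Palmer--Sell theory for continuous additive cocycles over compact flows: the set of all limits $\lim_n a(t_n,y_n)/t_n$ with $t_n\to\infty$, $y_n \in Y$, coincides with the compact interval $[\inf_\mu \int a(1,\cdot)\,d\mu,\; \sup_\mu \int a(1,\cdot)\,d\mu]$, where $\mu$ runs over invariant probability measures on $Y$. The principal spectrum of Definition \ref{principal-spectrum-def} uses only base points lying on the orbit of $(c,d)$, but this orbit is dense in $Y$ and each invariant measure on $Y$ can be realized as a weak-$*$ limit of empirical measures $\frac{1}{T_n-S_n}\int_{S_n}^{T_n}\delta_{(c,d)\cdot t}\,dt$; combined with continuity of $a(1,\cdot)$ this identifies the two sets and produces the compact interval $[\lambdainf(c,d),\lambdasup(c,d)]$. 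I expect the main obstacle to be precisely this reduction from ``all base points in $Y$'' to ``orbit of $(c,d)$,'' since Definition \ref{principal-spectrum-def} is orbit-based; the argument I sketched via empirical measures and an inverse limit/intermediate-value construction handles it, but careful verification is needed.

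For the second assertion, assume $(c,d)$ is uniquely ergodic and minimal with unique invariant measure $\mu$. Unique ergodicity together with continuity of $a(1,\cdot)\colon Y\to \RR$ gives, by Oxtoby's theorem, that $\frac{1}{T}\int_0^T a(1,y\cdot s)\,ds \to \int_Y a(1,\cdot)\,d\mu =: \bar\lambda$ uniformly in $y\in Y$. By the cocycle identity this forces $a(T_n-S_n,(c,d)\cdot S_n)/(T_n-S_n)\to \bar\lambda$ for every sequence with $T_n-S_n\to\infty$, regardless of $S_n$. Hence the principal spectrum reduces to the single point $\{\bar\lambda\}$, and $\lambdainf(c,d)=\lambdasup(c,d)=\bar\lambda$.
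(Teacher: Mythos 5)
Your proposal is correct and takes the same route as the paper: both reduce Theorem~\ref{principal-spectrum-thm2} to the Johnson--Palmer--Sell theory of real additive cocycles over compact flows, the paper by simply citing \cite{JPSe} and \cite[Thm.~2.10]{Mi3}, you by unpacking the reduction. One small imprecision worth flagging: the assertion that every invariant measure on $Y$ arises as a weak-$*$ limit of empirical measures $\tfrac{1}{T_n-S_n}\int_{S_n}^{T_n}\delta_{(c,d)\cdot t}\,dt$ along the single orbit of $(c,d)$ is stronger than needed and can fail for a general orbit closure; what suffices (and what does hold) is that density of the orbit together with uniform continuity of the flow on $[0,T]\times Y$ for each fixed $T$ identifies the orbit-restricted set of time averages with the set taken over all base points $y_n\in Y$, after which connectedness follows directly, e.g.\ because, using Eq.~\eqref{evolution-kappa}, the principal spectrum is the nested intersection over $M\to\infty$ of the sets $\cl\bigl\{\tfrac{1}{T-S}\int_S^T\kappa((c,d)\cdot t)\,dt: T-S\ge M\bigr\}$, each of which is compact and connected as the closure of the continuous image of a connected domain.
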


\vskip2ex In the case of~\eqref{random-eq1}, for $\omega \in \Omega$
we write  $U_\omega(t,0)$ for $U_{(c^\omega,d^\omega)}(t,0)$ and
$w(\omega)$ for $w((c^{\omega},d^{\omega}))$.

\begin{theorem}
\label{principal-exponent-thm} For \eqref{random-eq1}, there exists
$\lambda (c,d)\in \RR$ such that
\begin{equation*}
\lambda (c,d)= \lim\limits_{T\to\infty}
\frac{\ln\|U_\omega(T,0)w(\omega)\|}{T}
\end{equation*}
for a.e.~$\omega \in \Omega$.
\end{theorem}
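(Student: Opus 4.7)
The plan is to realize $\phi(T,\omega) := \ln\norm{U_\omega(T,0)w(\omega)}$ as an additive cocycle over the ergodic base $(\OFP,\{\theta_t\})$ and then invoke Birkhoff's theorem. First I would exploit the uniqueness clause of Theorem~\ref{globally-positive-existence}(ii) together with the identification $(c^\omega,d^\omega)\cdot t=(c^{\theta_t\omega},d^{\theta_t\omega})$ (which is immediate from $c^\omega(s+t,x)=c(\theta_{s+t}\omega,x)=c(\theta_s\theta_t\omega,x)=c^{\theta_t\omega}(s,x)$, and similarly for $d$). This forces $w((c^\omega,d^\omega)\cdot t)=w(\theta_t\omega)$, and since $U_\omega(s,0)w(\omega)$ is a positive element on the fiber $(c^\omega,d^\omega)\cdot s$, the uniqueness of globally positive directions and the normalization $\norm{w(\theta_s\omega)}=1$ give
\begin{equation*}
U_\omega(s,0)w(\omega)=\norm{U_\omega(s,0)w(\omega)}\,w(\theta_s\omega).
\end{equation*}
Combining this with the cocycle property \eqref{cocycle} for $U$, I get the multiplicative identity
\begin{equation*}
\norm{U_\omega(t+s,0)w(\omega)}=\norm{U_\omega(s,0)w(\omega)}\cdot\norm{U_{\theta_s\omega}(t,0)w(\theta_s\omega)},
\end{equation*}
whose logarithm is the additive cocycle relation $\phi(t+s,\omega)=\phi(s,\omega)+\phi(t,\theta_s\omega)$.

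Next I would verify that $h(\omega):=\phi(1,\omega)$ lies in $L^1(\OFP)$. Measurability follows from the measurability of $\omega\mapsto(c^\omega,d^\omega)\in Y$ and the continuity of $w\colon Y\to X^{++}$ and of $((\tilde c,\tilde d),u_0)\mapsto U_{(\tilde c,\tilde d)}(1,0)u_0$ in $L_2(D)$ (Proposition~\ref{norm-continuity-prop-in-X-theta}). For an upper bound, note that by Proposition~\ref{evolution-op-in-smooth-case-prop1} and the uniform bounds on the coefficients (A3)(b), (A4)(b), the norms $\norm{U_{(\tilde c,\tilde d)}(1,0)}$ are bounded uniformly in $(\tilde c,\tilde d)\in Y$, so $\phi(1,\omega)\le\const$. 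For a lower bound, by continuity of $w\colon Y\to X^{++}\subset L_2(D)$ on the compact set $Y$ and the joint continuity in Proposition~\ref{norm-continuity-prop-in-X-theta}, the map $(\tilde c,\tilde d)\mapsto\norm{U_{(\tilde c,\tilde d)}(1,0)w((\tilde c,\tilde d))}$ is continuous and strictly positive on $Y$, hence bounded below by some $\delta>0$, so $\phi(1,\omega)\ge\ln\delta$. Thus $h\in L^\infty(\OFP)\subset L^1(\OFP)$.

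With the cocycle identity, iteration yields $\phi(n,\omega)=\sum_{k=0}^{n-1}h(\theta_k\omega)$ for $n\in\NN$, so Birkhoff's Ergodic Theorem (Lemma~\ref{ch5-pre-ergodic-lm}) gives
\begin{equation*}
\lim_{n\to\infty}\frac{\phi(n,\omega)}{n}=\int_\Omega h\,d\PP=:\lambda(c,d)
\end{equation*}
for $\omega$ in a full-measure set $\Omega_0\subset\Omega$. To pass to continuous time, for $T>0$ write $T=\lfloor T\rfloor+\{T\}$ and use the cocycle identity once more:
\begin{equation*}
\frac{\phi(T,\omega)}{T}=\frac{\lfloor T\rfloor}{T}\cdot\frac{\phi(\lfloor T\rfloor,\omega)}{\lfloor T\rfloor}+\frac{\phi(\{T\},\theta_{\lfloor T\rfloor}\omega)}{T}.
\end{equation*}
The first summand tends to $\lambda(c,d)$ on $\Omega_0$, while $|\phi(s,\cdot)|$ is uniformly bounded on $s\in[0,1]$ by the same estimates as above, so the remainder is $O(1/T)$ and vanishes. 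This establishes the claim.

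The main obstacle I anticipate is the lower bound on $\phi(1,\omega)$: it requires that $w((\tilde c,\tilde d))$ depend continuously on $(\tilde c,\tilde d)\in Y$ and that $U_{(\tilde c,\tilde d)}(1,0)$ be injective on positive directions uniformly over the compact base $Y$. Both are provided by Theorem~\ref{globally-positive-existence} and Proposition~\ref{norm-continuity-prop-in-X-theta} together with compactness of $Y$, so once those are invoked the argument reduces to bookkeeping.
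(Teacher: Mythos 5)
Your proposal is correct, but it takes a different route from the paper. The paper's proof of Theorem~\ref{principal-exponent-thm} is a one-line reference to the Kingman subadditive ergodic theorem (via \cite{Krengel}), treating $T\mapsto\ln\norm{U_\omega(T,0)w(\omega)}$ as a subadditive process. You instead observe that the invariance of the one-dimensional positive bundle spanned by $w$ --- that is, $U_\omega(s,0)w(\omega)=\norm{U_\omega(s,0)w(\omega)}\,w(\theta_s\omega)$, which is exactly what Theorem~\ref{globally-positive-existence} gives once one identifies $(c^\omega,d^\omega)\cdot t$ with $(c^{\theta_t\omega},d^{\theta_t\omega})$ --- upgrades subadditivity to an exact additive cocycle, so Birkhoff's theorem (Lemma~\ref{ch5-pre-ergodic-lm}) suffices. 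This is more elementary and also more transparent: it explains \emph{why} a genuine limit exists rather than merely a subadditive $\limsup$. In fact, since $h(\omega)=\ln\norm{U_\omega(1,0)w(\omega)}=\int_0^1\kappa(\theta_t\omega)\,dt$ by Eq.~\eqref{evolution-kappa}, your argument folds in the content of Theorem~\ref{kappa-exponent-hm} (the identity $\lambda=\int_\Omega\kappa\,d\PP$) essentially for free, which the paper instead derives afterward as a separate consequence of the Kingman-based Theorem~\ref{principal-exponent-thm}. The only ingredient you flag that is not entirely trivial --- measurability of $\omega\mapsto(c^\omega,d^\omega)\in Y$ --- is exactly what the paper addresses in the proof of Theorem~\ref{kappa-exponent-hm} by citing \cite[Lemma~3.4]{MiSh1}, so it is available. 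Your $L^\infty$ (hence $L^1$) bound on $h$ via compactness of $Y$, continuity of $w$, and Propositions~\ref{evolution-op-in-smooth-case-prop1} and~\ref{norm-continuity-prop-in-X-theta} is sound, as is the discrete-to-continuous-time interpolation.
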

\begin{proof}
It follows from subadditive ergodic theorems (see \cite{Krengel}).
\end{proof}

\begin{definition}
\label{principal-exponent-def} The $\lambda(c,d)$ as in
Theorem~\ref{principal-exponent-thm} is called the {\em principal
Lyapunov exponent\/} of~\eqref{random-eq1}.
\end{definition}

\begin{remark}
In the existing literature, the principal spectrum is either defined
precisely as in Definition~\ref{principal-spectrum-def}
\textup{(}see~\cite{Mi3}\textup{)} or with the $L_2(D)$-norm replaced
by the norm in some fractional power space that embeds continuously
into $C^1(\bar{D})$ \textup{(}see, e.g.,~\cite{MiSh1}\textup{)}.  In
our setting, as $X_1$ is a one-dimensional invariant subbundle
spanned by a continuous function from $Y$ into $X$, we can replace
the $L_2(D)$-norm in Definition~\ref{principal-spectrum-def} with the
$X$-norm.
\end{remark}
\begin{remark}
Similarly, in the Definition~\ref{principal-exponent-def} the
$L_2(D)$-norm can be replaced with the $X$-norm.  Further,
in~\cite{MiSh1} the principal Lyapunov exponent was introduced as the
\textup{(}a.e.\ constant\textup{)} limit
\begin{equation*}
\lim\limits_{T\to\infty} \frac{\ln{\lVert U_{\omega}(T,0)}
\rVert_{X, X}}{T},
\end{equation*}
where $X$ is some fractional power space that embeds continuously
into $C^1(\bar{D})$.  With the help of \eqref{exp-sep-formula-1} one
can prove that for those $\omega \in \Omega$ for which $\lambda (c,d)
= \lim_{T\to\infty} \frac{\ln\norm{U_\omega(T,0)w(\omega)}}{T}$ there
holds also $\lambda (c,d) = \lim_{T\to\infty}
\frac{\ln{\norm{U_{\omega}(T,0)}}}{T}$ \textup{(}see ~the proof
of~\cite[Thm.~3.2(2)]{Mi1}\textup{)}.
\end{remark}
\begin{remark}
For the $L_2(D)$-theory of the principal spectrum and principal
Lyapunov exponents see the upcoming monograph~\cite{MiSh2}.
\end{remark}

We introduce now a useful concept.  For $(\tilde{c},\tilde{d}) \in Y$
put
\begin{align}
\label{introduction-kappa-eq} \kappa((\tilde{c},\tilde{d})) :=&
\int\limits_{D} \Bigl( \sum\limits_{i,j=1}^{N} a_{ij}(x) \frac{\p^2
w((\tilde{c},\tilde{d}))(x)}{\p x_{i} \p x_{j}} \Bigr)
w((\tilde{c},\tilde{d}))(x) \, dx \nonumber\\
&+ \int\limits_{D} \Bigl( \sum\limits_{i=1}^{N} a_{i}(x) \frac{\p
w((\tilde{c},\tilde{d}))(x)}{\p x_{i}} + \tilde{c}(0,x)
w((\tilde{c},\tilde{d}))(x) \Bigr) w((\tilde{c},\tilde{d}))(x) \,
dx.
\end{align}
By Proposition~\ref{evolution-op-in-smooth-case-prop1},
$w((\tilde{c},\tilde{d})) \in W^{2}_{2}(D)$, so
$\kappa((\tilde{c},\tilde{d}))$ is well defined.

The function $\kappa \colon Y \to \RR$ is continuous.  Indeed, notice
that applying integration by parts we can write
\begin{align}
\label{property-kappa-eq} \kappa((\tilde{c},\tilde{d})) = &-
\int\limits_{D} \sum\limits_{i,j=1}^{N} a_{ij}(x) \frac{\p
w((\tilde{c},\tilde{d}))(x)}{\p x_{i}} \frac{\p
w((\tilde{c},\tilde{d}))(x)}{\p x_{j}} \, dx \nonumber\\
&- \int\limits_{D} \sum\limits_{i=1}^{N} \Bigl(
\sum\limits_{j=1}^{N} \frac{\p a_{ij}(x)}{\p x_{i}} \frac{\p
w((\tilde{c},\tilde{d}))(x)}{\p x_j}\Bigr)
w((\tilde{c},\tilde{d}))(x) \, dx \nonumber\\
&+ \int\limits_{D} \Bigl( \sum\limits_{i=1}^{N} a_{i}(x) \frac{\p
w((\tilde{c},\tilde{d}))(x)}{\p x_{i}} + \tilde{c}(0,x)
w((\tilde{c},\tilde{d}))(x) \Bigr)
w((\tilde{c},\tilde{d}))(x) \, dx \nonumber\\
&+ \int\limits_{\p D} \sum\limits_{i=1}^{N} \Bigl(
\sum\limits_{j=1}^{N} a_{ij}(x) \frac{\p
w((\tilde{c},\tilde{d}))(x)}{\p x_{j}} \Bigr)
w((\tilde{c},\tilde{d}))(x) \nu_{i}(x) \, dS.
\end{align}
As $w \colon Y \to X$ is continuous, the above expression depends
continuously on $(\tilde{c},\tilde{d})$, too.

We point out that the function $\kappa((\tilde{c},\tilde{d}))$
introduced in \eqref{introduction-kappa-eq} is a very useful quantity
in the investigation of various properties of principal spectrum and
principal Lyapunov exponents.  This quantity will be heavily used in
next section.  In the rest of this section, we discuss how to use the
function $\kappa$ to characterize the principal spectrum and
principal Lyapunov exponents.

Let $\eta_{(\tilde{c},\tilde{d})}(t) :=
\norm{U_{(\tilde{c},\tilde{d})}(t,0)w((\tilde{c},\tilde{d}))}$ $( >
0)$. Then $\eta_{(\tilde{c},\tilde{d})}(t)$ is differentiable and
$U_{(\tilde{c},\tilde{d})}(t,0)w((\tilde{c},\tilde{d})) =
\eta_{(\tilde{c},\tilde{d})}(t)w((\tilde{c},\tilde{d}) \cdot t)$.
Hence $w((\tilde{c},\tilde{d}) \cdot t)$ is also differentiable in
$t$.  By \eqref{nonauton-eq2}, we have
\begin{align*}
&\dot{\eta}_{(\tilde{c},\tilde{d})}(t) w((\tilde{c},\tilde{d}) \cdot
t) + \eta_{(\tilde{c},\tilde{d})}(t)
\frac{\p}{\p t}w((\tilde{c},\tilde{d}) \cdot t) \\
=& \sum\limits_{i,j=1}^{N}
a_{ij}(x)\eta_{(\tilde{c},\tilde{d})}(t)\frac{\p ^2
w((\tilde{c},\tilde{d}) \cdot t)}{\p x_i\p
x_j}+\sum\limits_{i=1}^{N} a_{i}(x)\eta_{(\tilde{c},\tilde{d})}(t)
\frac{\p w((\tilde{c},\tilde{d}) \cdot t)}{\p x_i}\\
 &+ \tilde{c}(t,x)\eta_{(\tilde{c},\tilde{d})}(t)
w((\tilde{c},\tilde{d}) \cdot t).
\end{align*}
Taking the inner product of the above equation with
$w((\tilde{c},\tilde{d}) \cdot t)$ and observing that $\langle
w((\tilde{c},\tilde{d}) \cdot t), w((\tilde{c},\tilde{d}) \cdot t)
\rangle \equiv 1$ and $\langle \frac{\p}{\p t}w((\tilde{c},\tilde{d})
\cdot t), w((\tilde{c},\tilde{d}) \cdot t) \rangle \equiv 0$ we get
$\dot{\eta}_{(\tilde{c},\tilde{d})}(t) = \kappa((\tilde{c},\tilde{d})
\cdot t) \eta_{(\tilde{c},\tilde{d})}(t)$, that is,
\begin{equation}
\label{kappa-exponent} \frac{d}{dt}
\norm{U_{(\tilde{c},\tilde{d})}(t,0) w((\tilde{c},\tilde{d}))} =
\kappa((\tilde{c},\tilde{d}) \cdot t)
\norm{U_{(\tilde{c},\tilde{d})}(t,0) w((\tilde{c},\tilde{d}))}
\end{equation}
for any $(\tilde{c},\tilde{d}) \in Y$ and any $t \ge 0$.

By \eqref{kappa-exponent}, we have
\begin{equation}
\label{evolution-kappa}
\ln\norm{U_{(\tilde c,\tilde d) \cdot S}(T-S,0)
w((\tilde{c},\tilde{d})\cdot S)} = \int_S^ T
\kappa((\tilde{c},\tilde{d}) \cdot t) \,dt
\end{equation}
for any $(\tilde{c},\tilde{d}) \in Y$ and $S < T$. Then following
from Definition \ref{principal-spectrum-def} we have
\begin{theorem}
\label{principal-spectrum-thm3}
Let $[\lambdainf(c,d),\lambdasup(c,d)]$ be the principal spectrum
interval of \eqref{nonauton-eq1}.  Then
\begin{equation}
\label{minimum-spectrum}
\lambdainf(c,d) = \liminf_{T-S\to\infty} \frac{1}{T-S}\int_S^T
\kappa((c,d) \cdot t)\,dt
\end{equation}
and
\begin{equation}
\label{maximum-spectrum}
\lambdasup(c,d) = \limsup_{T-S\to\infty} \frac{1}{T-S}\int_S^T
\kappa((c,d) \cdot t)\,dt.
\end{equation}
\end{theorem}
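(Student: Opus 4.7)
The plan is to derive both identities directly from the integral representation \eqref{evolution-kappa} together with Definition \ref{principal-spectrum-def} and Theorem \ref{principal-spectrum-thm2}. The heavy lifting has already been done in proving those earlier results; what remains is essentially a bookkeeping argument about subsequential limits.

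First, I would apply \eqref{evolution-kappa} with $(\tilde c, \tilde d) = (c,d)$ and with $S = S_n$, $T = T_n$ to rewrite each defining quotient as a time average of $\kappa$:
\begin{equation*}
\frac{\ln \norm{U_{(c,d)\cdot S_n}(T_n - S_n, 0)\, w((c,d)\cdot S_n)}}{T_n - S_n} = \frac{1}{T_n - S_n}\int_{S_n}^{T_n} \kappa((c,d)\cdot t)\,dt.
\end{equation*}
Hence, by Definition \ref{principal-spectrum-def} and Theorem \ref{principal-spectrum-thm2}, the set
\begin{equation*}
\Lambda := \Bigl\{\, L \in \RR : L = \lim_{n\to\infty}\frac{1}{T_n - S_n}\int_{S_n}^{T_n}\kappa((c,d)\cdot t)\,dt \text{ for some } S_n < T_n,\ T_n - S_n \to \infty\,\Bigr\}
\end{equation*}
is precisely the compact interval $[\lambdainf(c,d), \lambdasup(c,d)]$. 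In particular, $\lambdainf(c,d) = \min \Lambda$ and $\lambdasup(c,d) = \max \Lambda$.

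Next, I would show that $\min \Lambda$ equals $\liminf_{T-S\to\infty}\frac{1}{T-S}\int_S^T \kappa((c,d)\cdot t)\,dt$, and symmetrically that $\max \Lambda$ equals the corresponding $\limsup$. Denote $g(S,T) := \frac{1}{T-S}\int_S^T \kappa((c,d)\cdot t)\,dt$. One direction is immediate: any $L \in \Lambda$ with witnessing sequence $(S_n, T_n)$ satisfying $T_n - S_n \to \infty$ must satisfy $\liminf g \le L \le \limsup g$, since eventually $T_n - S_n$ exceeds any given threshold $R$. For the opposite direction, I would pick $R_k \to \infty$ and $(S_k, T_k)$ with $T_k - S_k > R_k$ such that $g(S_k, T_k) < \inf_{T-S > R_k} g + 1/k$, so that $\lim\sup_k g(S_k, T_k) \le \liminf g$. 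Since $\kappa$ is continuous on the compact space $Y$, it is uniformly bounded, so the sequence $g(S_k, T_k)$ is bounded and admits a convergent subsequence; its limit lies in $\Lambda$ and is at most $\liminf g$, giving $\min \Lambda \le \liminf g$. The argument for $\max \Lambda = \limsup g$ is identical after sign reversal.

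There is no substantive obstacle: the continuity and boundedness of $\kappa$ on $Y$ (from Lemma \ref{lemma:properties-of-Y} and the continuity of $w \colon Y \to X$) is the only technical ingredient needed beyond the already-established identity \eqref{evolution-kappa} and Theorem \ref{principal-spectrum-thm2}. If anything deserves care, it is the extraction of a convergent subsequence in the last step above, so as to land genuinely inside $\Lambda$ rather than merely bounding $\liminf g$ by a lim sup of a real sequence.
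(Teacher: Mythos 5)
Your proof is correct and takes essentially the same route as the paper: the paper derives the identity \eqref{evolution-kappa} from \eqref{kappa-exponent} and then simply asserts that Theorem~\ref{principal-spectrum-thm3} follows ``from Definition~\ref{principal-spectrum-def},'' leaving the routine subsequential $\liminf$/$\limsup$ bookkeeping to the reader. You spell that bookkeeping out, correctly flagging the one technical ingredient needed to extract a convergent subsequence landing in $\Lambda$ — boundedness of $\kappa$, which follows from its continuity on the compact space $Y$.
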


\vskip2ex In the case of~\eqref{random-eq1} we write $\kappa(\omega)$
instead of $\kappa((c^{\omega},d^{\omega}))$.  We have
\begin{theorem}
\label{kappa-exponent-hm}
Consider~\eqref{random-eq1}.  Then
\begin{equation*}
\lambda = \lim\limits_{T\to \infty}\frac{1}{T}\int_0^T
\kappa(\theta_t\omega) \,dt = \int_{\Omega} \kappa(\cdot)
\,d\PP(\cdot)
\end{equation*}
for a.e.~$\omega \in \Omega$.
\end{theorem}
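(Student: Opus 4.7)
The plan is to show that the two equalities in the theorem are essentially restatements of earlier results once one exploits the identity \eqref{evolution-kappa} and Birkhoff's Ergodic Theorem.

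First, I would observe that for the random equation, $(c^{\omega},d^{\omega})\cdot t = (c^{\theta_t\omega}, d^{\theta_t\omega})$, which is an immediate consequence of the definition $c^{\omega}(s,x)=c(\theta_s\omega,x)$ and the cocycle property of $\{\theta_t\}$. Consequently $\kappa((c^\omega,d^\omega)\cdot t) = \kappa(\theta_t\omega)$. Applying \eqref{evolution-kappa} with $(\tilde c,\tilde d)=(c^\omega,d^\omega)$, $S=0$ and $T=T$ gives
\begin{equation*}
\ln\|U_\omega(T,0)w(\omega)\| \;=\; \int_0^T \kappa(\theta_t\omega)\,dt.
\end{equation*}
Dividing by $T$ and letting $T\to\infty$, Theorem~\ref{principal-exponent-thm} immediately yields the first equality
\begin{equation*}
\lambda(c,d) \;=\; \lim_{T\to\infty}\frac{1}{T}\int_0^T \kappa(\theta_t\omega)\,dt \qquad \text{for a.e.\ } \omega\in\Omega.
\end{equation*}

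For the second equality, I would apply Lemma~\ref{ch5-pre-ergodic-lm} (Birkhoff) to the function $\omega\mapsto \kappa(\omega):=\kappa((c^\omega,d^\omega))$. To justify this, I need that this function belongs to $L^1\OFP$. Measurability follows from the fact that $\omega\mapsto(c^\omega,d^\omega)\in Y$ is measurable (by the measurability of $(t,\omega)\mapsto\theta_t\omega$ together with assumption (A3)(b), (A4)(b)), composed with the continuous function $\kappa\colon Y\to\RR$ established just above the theorem statement. Integrability is automatic: since $Y$ is compact (Lemma~\ref{lemma:properties-of-Y}(i)) and $\kappa$ is continuous on $Y$, $\kappa$ is uniformly bounded on $Y$, hence in $L^\infty\OFP\subset L^1\OFP$. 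Birkhoff's theorem then gives the existence of an invariant $\Omega_0\subset\Omega$ of full measure on which
\begin{equation*}
\lim_{T\to\infty}\frac{1}{T}\int_0^T \kappa(\theta_t\omega)\,dt \;=\; \int_\Omega \kappa(\cdot)\,d\PP.
\end{equation*}

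Combining the two displays on the intersection of the two full-measure sets (the one from Theorem~\ref{principal-exponent-thm} and $\Omega_0$) gives the conclusion. There is no real obstacle here — the theorem is essentially an identification of the Lyapunov exponent obtained from the norm growth of $U_\omega(t,0)w(\omega)$ with a Birkhoff time-average of the ``instantaneous growth rate'' $\kappa$, and the identity \eqref{evolution-kappa} is the precise bridge. The only small subtlety is to check the measurability and boundedness of $\kappa$ on $\Omega$, which is handled by the continuity of $\kappa$ on the compact hull $Y$.
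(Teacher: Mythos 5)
Your proposal is correct and follows essentially the same route as the paper: identify $\kappa(\theta_t\omega)$ with $\kappa((c^\omega,d^\omega)\cdot t)$, invoke Eq.~\eqref{evolution-kappa} to convert the norm growth into a time integral of $\kappa$, use Theorem~\ref{principal-exponent-thm} for the first equality, and apply Birkhoff's Ergodic Theorem (Lemma~\ref{ch5-pre-ergodic-lm}) for the second, after noting measurability of $\omega\mapsto(c^\omega,d^\omega)$ and continuity of $\kappa$ on $Y$. The paper's proof simply cites Lemma~3.4 of~\cite{MiSh1} for the measurability step where you sketch it directly from (A3)(b), (A4)(b); otherwise the arguments coincide.
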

\begin{proof}
By the arguments of Lemma 3.4 in \cite{MiSh1}, the map $[\,\Omega \ni
\omega \mapsto (c^{\omega},d^{\omega}) \in Y\,]$ is measurable. The
theorem is then  a consequence of
Theorem~\ref{principal-exponent-thm}, Eq.~\eqref{evolution-kappa} and
Birkhoff's Ergodic Theorem (Lemma~\ref{ch5-pre-ergodic-lm}).
\end{proof}

We remark that if $c(t,x)$ and $d(t,x)$ are independent of $t$, then
$(c,d) = (\hat{c},\hat{d})$ and $\lambdainf(c,d) = \lambdasup(c,d) =
\lambda(c,d)$. Moreover, we have the following easy theorem about the
continuous dependence of $\lambda(c,d)$ on $(c,d)$.

\begin{theorem}
\label{continuous-dependence-of-principal-eigenvalue-thm}
If $c^{(n)}$ converges in $C(\bar{D})$ to $c$ and $d^{(n)}$ converges
in $C(\p D)$ to $d$ then $\lambda(c^{(n)},d^{(n)}) \to \lambda(c,d)$.
\end{theorem}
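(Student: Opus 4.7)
The plan is to exploit the fact that for time-independent coefficients the principal Lyapunov exponent coincides with the quantity $\kappa((c,d))$ from~\eqref{introduction-kappa-eq}. Since $c^{(n)}$ and $c$ depend only on $x$, each base space $Y(c^{(n)},d^{(n)})$ and $Y(c,d)$ is a singleton, so $(c^{(n)},d^{(n)})\cdot t = (c^{(n)},d^{(n)})$ for every $t$. Formula~\eqref{evolution-kappa} applied with $S=0$ then gives $\ln\norm{U_{(c^{(n)},d^{(n)})}(t,0)w_n} = t\,\kappa_n$, where $w_n := w((c^{(n)},d^{(n)}))$ and $\kappa_n := \kappa((c^{(n)},d^{(n)}))$. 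Hence $\lambda(c^{(n)},d^{(n)}) = \kappa_n$, and similarly $\lambda(c,d) = \kappa((c,d))$, so it suffices to prove $\kappa_n \to \kappa((c,d))$. In view of the boundary-free representation~\eqref{property-kappa-eq}, this reduces to showing $w_n \to w((c,d))$ in $C^1(\bar D)$.

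First I would extract a uniform a priori bound on $\lambda_n$. The function $w_n \in X^{++}$ is the classical principal eigenfunction of the elliptic problem
\begin{equation*}
\sum_{i,j=1}^N a_{ij}\frac{\p^2 w_n}{\p x_i\p x_j} + \sum_{i=1}^N a_i \frac{\p w_n}{\p x_i} + c^{(n)} w_n = \lambda_n w_n \text{ in } D,\qquad \mathcal{B}(d^{(n)}) w_n = 0 \text{ on } \p D,
\end{equation*}
normalised by $\norm{w_n} = 1$. A Collatz--Wielandt bound (or the maximum principle applied to $\log(w_n/\phi)$ for a fixed auxiliary $\phi \in X^{++}$) yields $\abs{\lambda_n} \le K$ with $K$ depending only on $\norminfty{c^{(n)}}$, $\norminfty{d^{(n)}}$ and on the fixed data $a_{ij},a_i,b_i$; the assumed sup-norm convergence of $c^{(n)}$ and $d^{(n)}$ then makes $K$ independent of $n$. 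Rewriting the eigenvalue equation as an inhomogeneous linear elliptic equation with right-hand side $(\lambda_n - c^{(n)}) w_n$ and invoking $L^p$ elliptic estimates, together with the uniform $C(\p D)$-bound on $d^{(n)}$ in the Robin case, gives a uniform bound on $w_n$ in $W^{2,p}(D)$ for every $p < \infty$, and hence in $C^{1,\beta}(\bar D)$ for some $\beta \in (0,1)$ by Sobolev embedding.

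Compactness then allows me to pass, along any subsequence, to a further subsequence with $\lambda_{n_k} \to \lambda^{*}$ and $w_{n_k} \to w^{*}$ in $C^1(\bar D)$. The limit $w^{*}$ satisfies $w^{*}\ge 0$, $\norm{w^{*}}=1$, and, by passing to the limit either pointwise (using the uniform $W^{2,p}$-bound and weak compactness) or in the weak formulation, solves the eigenvalue problem for $(c,d)$ with eigenvalue $\lambda^{*}$. Uniqueness of the principal eigenvalue and eigenfunction (Theorem~\ref{globally-positive-existence}(ii) in the autonomous setting) forces $\lambda^{*} = \lambda(c,d)$ and $w^{*} = w((c,d))$. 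Since every subsequence of $(\lambda_n)$ has a subsubsequence converging to $\lambda(c,d)$, the whole sequence converges, and plugging $w_n \to w((c,d))$ in $C^1(\bar D)$ and $c^{(n)} \to c$ in $C(\bar D)$ into~\eqref{property-kappa-eq} concludes the proof.

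The main obstacle is the uniform regularity step, since classical Schauder theory for the Robin boundary condition requires H\"older control on $d^{(n)}$, whereas the hypothesis only gives $C(\p D)$ convergence. I would circumvent this either by (a) relying on $L^p$ elliptic estimates, which need only $L^\infty$ control of the zeroth-order and boundary coefficients, and recovering $C^{1,\beta}$-regularity by Sobolev embedding, or (b) freezing at $(c,d)$ and treating $(c^{(n)}-c)w_n$ and $(d^{(n)}-d)w_n|_{\p D}$ as small inhomogeneities to be absorbed via Schauder estimates for the limit operator. Either route supplies the compactness of $\{w_n\}$ in $C^1(\bar D)$ needed to close the argument.
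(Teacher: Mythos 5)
The paper gives no proof of this theorem: it is stated as an ``easy theorem'' immediately before Section~\ref{main-results}, and the text simply moves on. So there is no argument in the paper to compare yours against; what follows is an assessment of your proposal on its own merits.

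Your strategy is sound. You correctly identify that with $c,d$ time-independent the hull $Y(c,d)$ is a singleton, so $\kappa$ is constant, \eqref{evolution-kappa} gives $\ln\norm{U(T,0)w} = T\kappa$, and hence $\lambda(c,d) = \kappa((c,d))$; and you correctly note that the integration-by-parts form~\eqref{property-kappa-eq} involves only $w$, $\nabla w$, $c$ and the fixed $a_{ij}$, $a_i$, so $C^1(\bar D)$ convergence of the eigenfunctions together with $C(\bar D)$ convergence of $c^{(n)}$ is all you need. The body of the proof --- uniform bound on $\lambda_n$, uniform $W^{2,p}$ and hence $C^{1,\beta}(\bar D)$ bounds on the normalized $w_n$, extraction of a subsequence converging in $C^1(\bar D)$, passage to the limit in the eigenvalue problem using the weak-$W^{2,p}$ limit, and identification of the limit via uniqueness of the positive normalized eigenfunction (Krein--Rutman, or equivalently Theorem~\ref{globally-positive-existence}(ii) in the autonomous case) --- is the standard compactness-and-uniqueness argument for continuity of principal eigenvalues and is correct. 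The subsequence-of-subsequences device then upgrades to convergence of the full sequence, and \eqref{property-kappa-eq} delivers $\kappa_n \to \kappa((c,d))$.

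The one place that deserves care, and you flag it yourself, is the uniform regularity estimate in the Robin case, where $d^{(n)}$ is only controlled in $C(\p D)$. Your option (a) (invoking $L^p$ elliptic theory with merely bounded boundary zeroth-order coefficient) is plausible but would require a careful citation; your option (b) --- freezing the boundary operator at $\mathcal{B}(d)$, writing $\mathcal{B}(d)w_n = (d-d^{(n)})w_n$ as a small inhomogeneity, and absorbing it --- is cleaner and unambiguous, and I would present the argument that way. For what it is worth, a softer alternative route, perhaps closer to what the authors had in mind when calling the theorem ``easy,'' is to observe that $e^{\lambda(c,d)} = \rho(U_{(c,d)}(1,0))$ is the spectral radius of a compact, strongly positive operator on $X$, that the principal eigenvalue of such an operator is simple and isolated, and that isolated simple eigenvalues vary continuously under operator-norm perturbations; one then only needs $\norm{U_{(c^{(n)},d^{(n)})}(1,0) - U_{(c,d)}(1,0)}_{\mathcal{L}(X,X)} \to 0$. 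That route avoids elliptic bootstrapping but faces essentially the same question of establishing operator-norm convergence under $C^0$ perturbation of the Robin coefficient; note that (A7) does not apply directly here since the $(c^{(n)},d^{(n)})$ live in distinct singleton hulls. Either route closes the argument; yours is correct as written once the regularity step is settled via option (b).
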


\section{Time averaging}
\label{main-results}

In this section we state and prove our results on the influence of
time variations on principal spectrum and principal Lyapunov exponent
of \eqref{nonauton-eq1} and \eqref{random-eq1}.

Consider \eqref{nonauton-eq1}. Let $\Sigma(c,d) :=
[\lambdainf(c,d),\lambdasup(c,d)]$ be the principal spectrum interval
of \eqref{nonauton-eq1}.  For $(\hat{c},\hat{d}) \in \hat{Y}(c,d)$
let $\lambda(\hat{c},\hat{d})$ denote the principal eigenvalue of an
averaged equation \eqref{nonauton-or-random-avg}. Recall that
\begin{align*}
\hat{Y}(c,d) = \{\,(\hat{c},\hat{d}):\, & \exists\, S_n < T_n \
\text{with} \ T_n-S_n \to\infty
\ \text{such that} \nonumber \\
& \ (\hat{c},\hat{d}) = \lim_{n\to\infty}
(\bar{c}(\cdot;S_n,T_n),\bar{d}(\cdot;S_n,T_n))\,\},
\end{align*}
where $\bar{c}(x;S_n,T_n) :=
\frac{1}{T_n-S_n}\int_{S_n}^{T_n}c(t,x)\,dt$, $\bar{d}(x;S_n,T_n) :=
\frac{1}{T_n-S_n}\int_{S_n}^{T_n}d(t,x)\,dt$, and the convergence is
in $C(\bar{D}) \times C(\p D)$.

Consider \eqref{random-eq1}.  Let $\lambda(c,d)$ be the principal
Lyapunov exponent.  Let
\begin{equation*}
\hat{c}(x) := \int_{\Omega} c(\omega,x)\,d\PP(\omega),\quad
\hat{d}(x) = \int_{\Omega} d(\omega,x)\,d\PP(\omega).
\end{equation*}
Let  $\lambda(\hat{c},\hat{d})$ be the principal eigenvalue of the
averaged equation~\eqref{nonauton-or-random-avg}.

Then we have
\begin{theorem}
\label{ch5-smoothlb-thm1}
\begin{itemize}
\item[{\rm (1)}]
Consider \eqref{nonauton-eq1}.  There is $(\hat{c},\hat{d}) \in
\hat{Y}(c,d)$ such that $\lambdainf(c,d) \ge
\lambda(\hat{c},\hat{d})$ and $\lambdasup(c,d) \ge
\lambda(\hat{c},\hat{d})$ for any $(\hat{c},\hat{d}) \in
\hat{Y}(c,d)$.

\item[{\rm (2)}]
Consider \eqref{random-eq1}.  $\lambda(c,d) \ge
\hat{\lambda}(\hat c,\hat d)$.
\end{itemize}
\end{theorem}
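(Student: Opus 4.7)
The plan hinges on the quantity $\kappa((\tilde c, \tilde d))$ from \eqref{introduction-kappa-eq}, which by Theorem~\ref{principal-spectrum-thm3} realizes $\lambdainf(c,d)$ and $\lambdasup(c,d)$ as the $\liminf$ and $\limsup$ of the time-averages $\frac{1}{T-S}\int_S^T \kappa((c,d)\cdot t)\,dt$, and by Theorem~\ref{kappa-exponent-hm} realizes $\lambda(c,d) = \int_\Omega \kappa(\omega)\,d\PP$ in the random case. Since the averaged equation is time-independent, its principal eigenvalue equals $\kappa((\hat c, \hat d))$ computed at the normalized stationary eigenfunction, so the two inequalities reduce to comparisons between averages of $\kappa((c,d)\cdot t)$ (resp.\ $\kappa(\omega)$) and $\kappa((\hat c, \hat d))$.

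The technical heart is the integration-by-parts formula~\eqref{property-kappa-eq}, which splits $\kappa$ into a negative-definite quadratic form in $\nabla w$, two bilinear pieces in $(w,\nabla w)$, the potential term $\int_D \tilde c(0,\cdot)\,w^2\,dx$, and a boundary contribution involving $\tilde d$. I would apply Lemma~\ref{ch5-pre-holder-lm} pointwise in $x$ with $h_i := \partial_i w_t$ (resp.\ $\partial_i w_\omega$) to control the elliptic energy in terms of $|\nabla W|_A^2$, where $W(x)$ is the time- or ensemble-average of $w_t(x)$ (resp.\ $w_\omega(x)$). The uniform $C^{2+\alpha}$ bounds on $w$ from Theorems~\ref{pre-bounds-of-w-thm} and~\ref{Holder-continuity-thm}, together with Ascoli--Arzel\`a, guarantee that $W$ is smooth enough to act as a test function and that differentiation commutes with the averaging in the limit. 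The coefficient convergences $\bar c(\cdot;S_n,T_n) \to \hat c$ and $\bar d(\cdot;S_n,T_n) \to \hat d$ (nonautonomous), or Birkhoff combined with Lemma~\ref{averaging-uniform} (random), identify the averaged lower-order pieces with those of $(\hat c, \hat d)$. After reorganizing the bilinear terms by integration by parts back, one arrives at an expression
\begin{equation*}
\text{(average of } \kappa) \,\ge\, \kappa_{(\hat c, \hat d)}(W) + (\text{non-negative Jensen defect}).
\end{equation*}

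The last step is a variational/Rayleigh-type estimate giving $\kappa_{(\hat c, \hat d)}(W) \ge \lambda(\hat c, \hat d)$: in the self-adjoint case this is the standard Rayleigh quotient inequality, while in the general case one proceeds either by a substitution $v = \phi \cdot w$ with a suitable positive weight $\phi$ that eliminates the first-order drift and reduces to a divergence-form problem, or by an appeal to the adjoint eigenfunction via the Krein--Rutman setup of Theorem~\ref{globally-positive-existence}. Combining these gives $\int_\Omega \kappa(\omega)\,d\PP \ge \lambda(\hat c, \hat d)$, proving~(2). For~(1), the same argument applied to any $(\hat c, \hat d) \in \hat Y(c,d)$ delivers $\lambdasup(c,d) \ge \lambda(\hat c, \hat d)$, while selecting $(S_n,T_n)$ along which the time-averaged $\kappa$ tends to $\lambdainf(c,d)$ and extracting a convergent subsequence by compactness of $\hat Y(c,d)$ (Lemma~\ref{lemma:properties-of-Y(hat-a)}), combined with continuity of the principal eigenvalue (Theorem~\ref{continuous-dependence-of-principal-eigenvalue-thm}), produces an $(\hat c, \hat d)$ with $\lambdainf(c,d) \ge \lambda(\hat c, \hat d)$.

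The main obstacle is organizing Lemma~\ref{ch5-pre-holder-lm}, which controls only the positive-definite quadratic in $\nabla w$, together with the drift term and the potential $cw^2$ --- neither of which decouple under averaging --- so that the net Jensen defect has the correct sign. This calls for a careful integration-by-parts recasting of the $(w,\nabla w)$ pieces into an augmented quadratic form amenable to Jensen, using the uniform positivity and boundedness of $w$ (Theorem~\ref{pre-bounds-of-w-thm}). A secondary subtlety is the Robin case, where $d$ is time-dependent and the boundary contribution must be averaged; here the $C^{3+\alpha}$-regularity of $d$ in $x$ from assumption~(A4) and the uniform smoothness of the averaged boundary data from Lemma~\ref{lemma:properties-of-Y(hat-a)}(iii) are essential for passing to the limit and applying integration by parts on $\p D$.
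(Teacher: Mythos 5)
Your proposal identifies the right ingredients (the quantity $\kappa$, Lemma~\ref{ch5-pre-holder-lm}, compactness of $\hat{Y}(c,d)$, continuity of the principal eigenvalue), but the central averaging step is wrong in a way that cannot be patched by a more careful bookkeeping of the bilinear terms.

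The paper does not average $w$ arithmetically; it averages $\ln w$ and works with the geometric mean $\hat{w}(x;\cdot,S,T) = \exp\bigl(\frac{1}{T}\int_0^T \ln w((\tilde{c},\tilde{d})\cdot(t+S))(x)\,dt\bigr)$. The whole argument hinges on this: differentiating $\hat{w}$ via the chain rule produces the product $\hat{w}\,\bigl(\text{avg}\,\partial_i\ln w\bigr)\bigl(\text{avg}\,\partial_j\ln w\bigr)$ alongside the averaged second log-derivative, and after substituting the parabolic equation satisfied by $t\mapsto w((\tilde{c},\tilde{d})\cdot(t+S))$ the Jensen defect $\sum a_{ij}\bigl[(\text{avg}\,h_i)(\text{avg}\,h_j)-\text{avg}(h_ih_j)\bigr]$ appears on the right-hand side of the elliptic equation for $\hat{w}$, with the favorable sign $\le 0$, where $h_i = \frac{1}{w}\partial_i w$ are the \emph{logarithmic} derivatives (not $\partial_i w$, as you propose). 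Discarding that nonpositive term shows $\hat{w}$ --- and its limit $w^*$ along a subsequence realizing $\lambdainf(c,d)$ --- is a positive supersolution of the averaged elliptic problem shifted by $\lambdainf(c,d)$; comparison with the semigroup of the averaged equation, via the exponential separation of Theorem~\ref{globally-positive-existence}, then gives $\lambda(\hat{c},\hat{d}) \le \lambdainf(c,d)$. With the arithmetic mean $W = \text{avg}\,w$ this mechanism disappears: $\partial_i\partial_j W = \text{avg}\,\partial_i\partial_j w$ is linear in $w$, so differentiating $W$ produces no Jensen defect at all.

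Your fallback, averaging $\kappa$ directly and claiming $\text{avg}\,\kappa \ge \kappa_{(\hat{c},\hat{d})}(W) + (\text{Jensen defect})$, has both inequalities going the wrong way. Formula~\eqref{property-kappa-eq} shows $\kappa$ contains $-\int_D\sum a_{ij}\partial_i w\,\partial_j w\,dx$, so Lemma~\ref{ch5-pre-holder-lm}(1) with $h_i = \partial_i w$ yields $\text{avg}\bigl(-\int\sum a_{ij}\partial_i w\,\partial_j w\bigr) \le -\int\sum a_{ij}\partial_i W\,\partial_j W$: the average of the leading piece of $\kappa$ is bounded \emph{above}, not below, by the corresponding expression in $W$. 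And your concluding estimate $\kappa_{(\hat{c},\hat{d})}(W) \ge \lambda(\hat{c},\hat{d})$ fails even in the self-adjoint divergence-form case, where the principal eigenvalue is the \emph{supremum} of the Rayleigh quotient, so one gets $\kappa_{(\hat{c},\hat{d})}(W) \le \lambda(\hat{c},\hat{d})$; for the non-self-adjoint operators considered here there is no Rayleigh characterization at all. The paper avoids forming any Rayleigh quotient in the averaged function: it constructs a positive supersolution and invokes comparison. You flagged the sign issue yourself as the ``main obstacle,'' and that diagnosis is correct, but the fix is not a rearrangement of the $(w,\nabla w)$ bilinear pieces; it is replacing the arithmetic time-average of $w$ by the exponential of the time-average of $\ln w$.
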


\begin{theorem}
\label{ch5-smoothlb-thm2}
\begin{itemize}
\item[{\rm (1)}]
Consider \eqref{nonauton-eq1}.  If $(c,d)$ is uniquely ergodic
and minimal, then $\lambdainf(c,d) = \lambdasup(c,d)$ and
$\lambdainf(c,d) = \lambda(\hat{c},\hat{d})$ for
$(\hat{c},\hat{d}) \in \hat{Y}(\hat{c},\hat{d})$ $(\hat{Y}(c,d)$
is necessarily a singleton$)$ if and only if $c(t,x) = c_{1}(x) +
c_{2}(t)$ and $d(t,x) = d(x)$.
\item[{\rm (2)}]
Consider \eqref{random-eq1}.  $\lambda(c,d) = \hat{\lambda}(c,d)$
if and only if there is $\Omega^{*} \subset \Omega$ with
$\PP(\Omega^{*}) = 1$ such that $c(\theta_{t}\omega,x) = c_{1}(x)
+ c_{2}(\theta_{t}\omega)$ for any $\omega \in \Omega^{*}$, $t
\in \RR$ and $x \in \bar{D}$, and $d(\theta_{t}\omega,x) = d(x)$
for any $\omega \in \Omega^{*}$, $t \in \RR$ and $x \in \p D$.
\end{itemize}
\end{theorem}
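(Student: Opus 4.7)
The plan is to treat the two directions of the equivalence separately, in parallel for parts (1) and (2).

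\textbf{Sufficiency.} Assume the splitting $c(\theta_{t}\omega,x)=c_{1}(x)+c_{2}(\theta_{t}\omega)$ and $d(\theta_{t}\omega,x)=d(x)$ holds a.s. For fixed $\omega$, introduce the gauge transformation $\hat{u}(t,x):=e^{-C_{2}(t)}u(t,x)$ with $C_{2}(t):=\int_{0}^{t}c_{2}(\theta_{s}\omega)\,ds$. A short computation shows $\hat{u}$ solves the \emph{autonomous} problem with coefficients $c_{1}$ and $d$, hence $\norm{U_{\omega}(t,0)v}=e^{C_{2}(t)}\norm{\hat{U}(t,0)v}$ for any admissible $v$. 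Dividing by $t$, letting $t\to\infty$, and invoking Birkhoff's theorem (Lemma~\ref{ch5-pre-ergodic-lm}) together with Theorem~\ref{principal-exponent-thm}, one gets $\lambda(c,d)=\lambda(c_{1},d)+\int_{\Omega}c_{2}\,d\PP$, which equals $\lambda(\hat{c},\hat{d})$ since $\hat{c}=c_{1}+\int c_{2}\,d\PP$ and $\hat{d}=d$. Part~(1) is identical, with Birkhoff replaced by Lemma~\ref{ch5-pre-existence-avg-lm} (unique ergodicity).

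\textbf{Necessity.} Conversely, suppose equality holds. The starting point is the $\kappa$-representations
\begin{equation*}
\lambda(c,d)=\int_{\Omega}\kappa(\omega)\,d\PP,\qquad \lambdainf(c,d)=\int_{Y(c,d)}\kappa\,d\mu,
\end{equation*}
from Theorem~\ref{kappa-exponent-hm} (resp.\ Theorem~\ref{principal-spectrum-thm3} together with Lemma~\ref{ch5-pre-existence-avg-lm}). Writing $\kappa$ in the integration-by-parts form~\eqref{property-kappa-eq}, which displays the quadratic form $-\int_{D}\sum_{i,j}a_{ij}\p_{i}w\,\p_{j}w\,dx$, the averaging inequality of Theorem~\ref{ch5-smoothlb-thm1} is produced by applying Lemma~\ref{ch5-pre-holder-lm}(2) pointwise in $x$ with $h_{i}(\omega,x):=\p_{i}w(\omega,x)$ (the interchange of $\p_{i}$ and $\int\,d\PP$ being justified by Lemma~\ref{averaging-derivative}) and comparing the outcome, through $\bar{w}(x):=\int_{\Omega}w(\omega,x)\,d\PP$, with a variational representation of $\lambda(\hat{c},\hat{d})$. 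The assumed equality therefore forces the equality clause of Lemma~\ref{ch5-pre-holder-lm}(2) at a.e.\ $x\in D$: the map $\omega\mapsto\p_{i}w(\omega,x)$ is $\PP$-a.s.\ independent of $\omega$. Continuity of $w\colon Y\to X^{++}$ (Theorem~\ref{globally-positive-existence}) together with the H\"older estimates of Theorem~\ref{Holder-continuity-thm} promote this a.e.-pointwise statement to all $x\in\bar{D}$, and integrating in $x$ yields $w(\omega,x)=w_{*}(x)+g(\omega)$ on a full-measure $\Omega^{*}\subset\Omega$.

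\textbf{Eliminating $g$ and extracting the splitting.} In the Dirichlet case, $w(\omega)\equiv 0$ on $\p D$ forces $g(\omega)=-w_{*}|_{\p D}$, a constant, hence $w(\omega)=w_{*}$ on $\Omega^{*}$. In the Neumann and Robin cases, the quadratic constraint $\norm{w_{*}+g(\omega)}=1$ shows $g(\omega)$ takes at most two values, and ergodicity of $\theta_{t}$ together with the cocycle relation $U_{\omega}(t,0)w(\omega)=\eta_{\omega}(t)w(\theta_{t}\omega)$ pin $g$ down to a constant; making this argument rigorous, particularly in the Robin case where the equality clause in Lemma~\ref{ch5-pre-holder-lm} must simultaneously absorb the boundary contribution of~\eqref{property-kappa-eq}, is the main technical obstacle. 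Once $w(\omega)=w_{*}$ a.s., the cocycle identity $U_{\omega}(t,0)w_{*}=\eta_{\omega}(t)\,w_{*}$ holds for $\omega\in\Omega^{*}$, and differentiating in $t$ yields, with $L_{0}u:=\sum_{i,j}a_{ij}\p_{ij}u+\sum_{i}a_{i}\p_{i}u$,
\begin{equation*}
\frac{\dot{\eta}_{\omega}(t)}{\eta_{\omega}(t)}=\frac{L_{0}w_{*}(x)}{w_{*}(x)}+c(\theta_{t}\omega,x),
\end{equation*}
so the right-hand side splits as $c_{1}(x)+c_{2}(\theta_{t}\omega)$ with $c_{1}(x):=-L_{0}w_{*}(x)/w_{*}(x)$; that $c_{2}$ depends on $\omega$ only through $\theta_{t}\omega$ is automatic, since the left-hand side and $c(\theta_{t}\omega,x)$ already do. The Robin boundary relation $\sum_{i}b_{i}\p_{i}w_{*}+d(\theta_{t}\omega,x)w_{*}=0$ on $\p D$ then forces $d(\theta_{t}\omega,x)=-\sum_{i}b_{i}\p_{i}w_{*}/w_{*}$, independent of $\omega$ and $t$. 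Part~(1) is proved in parallel, with $\PP$ replaced by $\mu$ and Birkhoff's theorem replaced by unique ergodicity.
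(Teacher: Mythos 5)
Your sufficiency argument is correct and essentially equivalent to the paper's: the gauge transformation $\hat{u}=e^{-C_2(t)}u$ and the paper's explicit globally positive solution $v(t,x;\omega)=u(x)\exp(\hat{\lambda}t+\int_0^t c_2(\theta_s\omega)\,ds)$ are two ways of saying the same thing.

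The necessity direction, however, has a genuine gap at its central step. You propose to apply Lemma~\ref{ch5-pre-holder-lm}(2) with $h_i(\omega,x):=\p_i w(\omega,x)$ and to compare through the \emph{arithmetic} average $\bar{w}(x)=\int_\Omega w(\omega,x)\,d\PP$. This is the wrong choice of test function and the wrong average. The quantity that the machinery of \eqref{nonauton-hat-w-eq1}--\eqref{nonauton-hat-w-eq3} is built around is the logarithmic derivative $h_i=\frac{1}{w}\p_i w$, and the right object to average is $\ln w$: the paper takes $\phi(x)=\exp\bigl(\int_\Omega\ln w(\omega)(x)\,d\PP\bigr)$. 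The reason is that dividing the equation satisfied by $\bar{v}(t,x)=w(\theta_t\omega)(x)$ by $\bar{v}$ converts it into an equation for $\ln w$ whose time average (via Lemma~\ref{averaging-uniform}) produces exactly the terms $\int\frac{\p_i w}{w}\,d\PP$ and $\int\bigl(\frac{\p^2_{ij}w}{w}-\frac{\p_i w\,\p_j w}{w^2}\bigr)\,d\PP$ appearing in \eqref{random-phi-eq2}--\eqref{random-phi-eq3}, and Lemma~\ref{ch5-pre-holder-lm}(2) is then applied pointwise to $h_i=\frac{\p_i w}{w}$. With your $h_i=\p_i w$ you do get a valid Cauchy--Schwarz inequality, but it does not combine with the other terms of \eqref{property-kappa-eq} to show that $\bar{w}$ is a supersolution of the averaged equation; in particular the invocation of a ``variational representation of $\lambda(\hat{c},\hat{d})$'' does not help, since the operator is not self-adjoint (there is a first-order drift $\sum a_i\p_i$), and in fact the paper avoids any Rayleigh-quotient characterization, relying instead on a supersolution comparison and on exponential separation (Theorem~\ref{globally-positive-existence}(iii)) to force $\phi$ to be proportional to the principal eigenfunction $\hat{\phi}$ of \eqref{additional-eq}.

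The downstream consequences of this mischoice compound. The equality clause of Lemma~\ref{ch5-pre-holder-lm}(2) with the correct $h_i=\p_i\ln w$ forces $\nabla\ln w(\omega)(x)$ to be a.s.\ independent of $\omega$, and integrating in $x$ yields the \emph{multiplicative} splitting $w(\omega)(x)=F(x)\,G(\omega)$, not the additive one $w(\omega,x)=w_*(x)+g(\omega)$. With the additive splitting your ``eliminating $g$'' step runs into the obstacle you yourself flag in the Robin case, but the difficulty is an artefact of the incorrect decomposition: with the multiplicative form one does not need to show $G$ is constant at all. Substituting $w(\theta_t\omega)(x)=F(x)G(\theta_t\omega)$ into \eqref{nonauton-hat-v-eq1} and dividing by $F(x)G(\theta_t\omega)$ immediately isolates the $x$-independent and $\omega$-dependent parts of $c(\theta_t\omega,x)$ (this is \eqref{auxiliary}), and the boundary condition $\mathcal{B}(\theta_t\omega)F=0$ yields the $\omega$-independence of $d$. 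In short: replace the arithmetic average by the geometric one, take $h_i=\frac{\p_i w}{w}$, replace the variational comparison by the supersolution-plus-exponential-separation argument, and replace the additive splitting by the multiplicative one; then the ``technical obstacle'' you describe disappears and your outline becomes the paper's proof.
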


In the case that the boundary condition is of the Dirichlet or
Neumann type or of the Robin type with $d$ independent of $t$, the
above theorems have been proved in \cite{MiSh1}.  For completeness,
we will provide proofs of the theorems including the case that the
boundary condition is of the Robin type with $d$ depending on $t$. We
note that the proof in the following for Theorem
\ref{ch5-smoothlb-thm1} is not the same as that in \cite{MiSh1} even
in the case $d$ is independent of $t$.
\begin{proof}[Proof of Theorem \ref{ch5-smoothlb-thm1}]
First of all, let $(\tilde{c},\tilde{d}) = (c,d)$ in the case of
\eqref{nonauton-eq1} and $(\tilde{c},\tilde{d})=
(c^\omega,d^{\omega})$ in the case of \eqref{random-eq1} for some
given $\omega \in \Omega$.  For given $S$ and $T > 0$,  let
\begin{equation*}
\eta(t;\tilde{c},\tilde{d}, S) := \norm{U_{(\tilde{c},\tilde{d})
\cdot S}(t,0)w((\tilde{c},\tilde{d}) \cdot S)}, \qquad t \ge 0,
\end{equation*}
and
\begin{equation*}
\hat{w}(x;\tilde{c},\tilde{d}, S, T) :=
\exp\Bigl(\frac{1}{T}\int_0^T \ln{w((\tilde{c},\tilde{d}) \cdot
(t+S))(x)}\,dt\Bigr)
\end{equation*}
for $x\in D$ and
\begin{equation*}
\hat w(x;\tilde{c},\tilde{d},S,T)=0
\end{equation*}
for $x\in\p D$ in the Dirichlet boundary condition case, and
\begin{equation*}
\hat{w}(x;\tilde{c},\tilde{d}, S, T) :=
\exp\Bigl(\frac{1}{T}\int_0^T \ln{w((\tilde{c},\tilde{d}) \cdot
(t+S))(x)}\,dt\Bigr)
\end{equation*}
for $x\in \bar D$ in the Neumann and Robin boundary conditions cases.
Note that $\hat w(x;\tilde c,\tilde d, S,T)\in C(\bar D)$.

 Let $\bar{v}(t,x;\tilde{c},\tilde{d},S) :=
w((\tilde{c},\tilde{d}) \cdot (t+S))(x)$. We have that
$\eta(t;\tilde{c},\tilde{d}, S)$ satisfies
\begin{equation}
\label{eta-nonauton}
\eta_t(t;\tilde{c},\tilde{d}, S) = \kappa((\tilde{c},\tilde{d})
\cdot(t+S)) \eta(t;\tilde{c},\tilde{d}, S),
\end{equation}
and  $\bar{v}(t,x;\tilde{c},\tilde{d}, S)$ satisfies
\begin{equation}
\label{nonauton-hat-v-eq1}
\begin{cases}
\disp\frac{\p \bar{v}}{\p t} = \sum_{i,j=1}^{N} a_{ij}(x)\frac{\p^2
\bar{v}}{\p x_{i} \p x_{j}} + \sum_{i=1}^{N} a_i(x) \frac{\p
\bar{v}}{\p x_i} \\[2ex]
\qquad\quad {} + \tilde{c}(t+S,x)\bar{v} -
\kappa((\tilde{c},\tilde{d}) \cdot (t+S)) \bar{v}, &
\quad x \in D \\[2ex]
 \mathcal{\tilde B}(t+S) \bar{v} = 0, & \quad x \in \p D,
\end{cases}
\end{equation}
where $\mathcal{\tilde B}(\cdot)$ is as in \eqref{nonauton-eq2}.
 Theorem
\ref{Holder-continuity-thm} allows us to differentiate sufficiently
many times to obtain that for any $x \in {D}$  ($x$ can also be in
$\bar D$ in the Neumann and Robin boundary conditions cases) we have
\begin{align}
&\label{nonauton-hat-w-eq1}
\frac{\p\hat{w}}{\p x_i}(x;\tilde{c},\tilde{d},S,T) \nonumber\\
= \null & \hat{w}(x;\tilde{c},\tilde{d}, S,T) \frac{1}{T}\int_0^T
\Bigl(\frac{1}{w((\tilde{c},\tilde{d}) \cdot (t+S))(x)} \frac{\p
w((\tilde{c},\tilde{d}) \cdot (t+S))(x)}{\p x_i}\Bigr)\,dt,
\end{align}
and that for any $x \in D$ we have
\begin{align}
& \label{nonauton-hat-w-eq2} \frac{\p^2\hat{w}}{\p x_i\p x_j} \nonumber\\
= \null &
\hat{w}(x;\tilde{c},\tilde{d}, S,T) \Bigg( \frac{1}{T^2}\int_{0}^{T}
\Bigl(\frac{1}{w((\tilde{c},\tilde{d}) \cdot (t+S))(x)} \frac{\p
w((\tilde{c},\tilde{d}) \cdot (t+S))(x)}{\p
x_i}\Bigr)\,dt \cdot \nonumber\\
&\qquad\qquad\qquad\qquad\qquad\int_{0}^{T} \Bigl(
\frac{1}{w((\tilde{c},\tilde{d}) \cdot (t+S))(x)}\frac{\p
w((\tilde{c},\tilde{d}) \cdot (t+S))(x)}{\p x_j}\Bigr)\,dt\Bigg ) \nonumber\\
\null &+ \hat{w}(x;\tilde{c},\tilde{d}, S,T) \frac{1}{T}
\int_0^T\Bigl(\frac{1}{w((\tilde{c},\tilde{d}) \cdot (t+S))(x)}
\frac{\p ^2 w((\tilde{c},\tilde{d}) \cdot (t+S))(x)}{\p x_i\p
x_j} \nonumber\\
\null &- \frac{1}{w^2((\tilde{c},\tilde{d}) \cdot (t+S))(x)} \frac{\p
w((\tilde{c},\tilde{d}) \cdot (t+S))(x)}{\p x_i} \frac{\p
w((\tilde{c},\tilde{d}) \cdot (t+S))(x)}{\p x_j}\Bigr)\,dt.
\end{align}
Then by \eqref{nonauton-hat-v-eq1},  $\hat{w} =
\hat{w}(x;\tilde{c},\tilde{d},S,T)$ satisfies
\begin{align}
\label{nonauton-hat-w-eq3} &\sum_{i,j=1}^{N}  a_{ij}(x) \frac{\p^2
\hat{w}}{\p x_{i} \p x_{j}} + \sum_{i=1}^{N} a_i(x) \frac{\p
\hat{w}}{\p x_i}
\nonumber \\
= \null & \Bigl(\frac{1}{T}\int\limits_{0}^{T} \frac{1}{\bar{v}} \frac{\p
\bar{v}}{\p t}(t,x;\tilde{c},\tilde{d},S)\,dt\Bigr ) \hat{w} \nonumber \\
& \quad + \Bigl(\frac{1}{T} \int_0^T \kappa((\tilde{c},\tilde{d})
\cdot (t+S))\,dt -
\frac{1}{T} \int_0^T \tilde{c}(t+S,x)\,dt\Bigr) \hat{w}\nonumber \\
& \quad + \hat{w}\sum_{i,j=1}^N a_{ij}(x) \Biggl(
\frac{1}{T}\int_0^T \Bigl(\frac{1}{ w((\tilde{c},\tilde{d}) \cdot
(t+S))} \frac{\p w((\tilde{c},\tilde{d})
\cdot (t+S))}{\p x_i} \Bigr)\,dt \cdot  \nonumber \\
&\quad\quad  \frac{1}{T} \int_0^T \Bigl(\frac
{1}{w((\tilde{c},\tilde{d}) \cdot (t+S))}\frac{\p w(\tilde{a} \cdot
(t+S))}{\p x_j} \Bigr)
\,dt \Biggr ) \\
& \quad - \hat{w} \sum_{i,j=1}^N a_{ij}(x) \cdot
\nonumber \\
& \quad\quad\quad\frac{1}{T}\int_0^T
\Bigl(\frac{1}{w^2((\tilde{c},\tilde{d}) \cdot (t+S))} \frac{\p
w((\tilde{c},\tilde{d}) \cdot (t+S))}{\p x_i} \frac{\p
w((\tilde{c},\tilde{d}) \cdot (t+S))}{\p x_j} \Bigr)\,dt\nonumber
\end{align}
for $x \in D$, and $\mathcal{\hat{B}}_{S,T} \hat{w} = 0$ for $x \in
\p D$, where
\begin{equation}
\label{nonauton-hat-w-eq4}
\mathcal{\hat{B}}_{S,T}\hat{w} :=
\begin{cases}
\hat{w} & \text{(Dirichlet)} \\[2ex]
\disp \sum_{i=1}^N b_i(x)\frac{\p \hat{w}}{\p x_i} &
\text{(Neumann)} \\[2ex]
\disp \sum_{i=1}^N b_i(x)\frac{\p \hat{w}}{\p x_i} +
\left(\frac{1}{T}\int_0^T \tilde{d}(t+S,x)\,dt \right) \hat{w} &
\text{(Robin)}.
\end{cases}
\end{equation}
By Lemma~\ref{ch5-pre-holder-lm}(1),
\begin{align}
\label{nonauton-hat-w-eq5} &\sum_{i,j=1}^{N}  a_{ij}(x) \frac{\p^2
\hat{w}}{\p x_{i} \p x_{j}} + \sum_{i=1}^{N} a_i(x) \frac{\p
\hat{w}}{\p x_i} \nonumber \\
\le \null & \Bigl(\frac{1}{T}\int\limits_{0}^{T} \frac{1}{\bar{v}}
\frac{\p \bar{v}}{\p t}(t,x;\tilde{c},\tilde{d},S)\,dt \Bigr )
\hat{w}
\nonumber \\
& + \Bigl(\frac{1}{T}\int_0^T \kappa((\tilde{c},\tilde{d}) \cdot
(t+S))\,dt - \frac{1}{T}\int_{0}^{T} \tilde{c}(t+S,x)\,dt \Bigr)
\hat{w}.
\end{align}

Note that $\bar{v}(t,x;\tilde{c},\tilde{d},S) =
w((\tilde{c},\tilde{d}) \cdot (t+S))(x)$ and by
Theorem~\ref{pre-bounds-of-w-thm}, for a fixed compact $D_0 \Subset
D$ there are $0 < m(D_0) < M$ such that $m(D_0) \le
\bar{v}(t,x;\tilde{c},\tilde{d}, S) \le M$ for any
$(\tilde{c},\tilde{d}) \in Y$, $t,S \in \RR$, and $x \in D_0$.  Hence
\begin{align}
&\label{nonauton-bar-v-eq3} \lim\limits_{T\to\infty} \frac{1}{T}
\int\limits_{0}^{T}
\frac{1}{\bar{v}} \frac{\p \bar{v}}{\p t}(t,x;\tilde{c},\tilde{d}, S)\,dt \nonumber\\
=& \lim\limits_{T\to\infty} \frac{1}{T}
(\ln\bar{v}(T,x;\tilde{c},\tilde{d},S) -
\ln\bar{v}(0,x;\tilde{c},\tilde{d},S)) = 0
\end{align}
for any $(\tilde{c},\tilde{d}) \in Y$, $S \in \RR$, and $x \in D$.
Moreover, the limits are uniform in $(x,S) \in D_0 \times \RR$ for
any compact $D_0 \Subset D$.

\medskip

\noindent (1) We first prove that $\lambdainf(c,d) \ge
\lambda(\hat{c},\hat{d})$ for some $(\hat{c},\hat{d}) \in
\hat{Y}(c,d)$.

Note that for given $S$, $T > 0$,
\begin{equation*}
\eta(t;c,d, S) = \norm{U_{(c,d) \cdot S}(t,0)w((c,d) \cdot S)},
\end{equation*}
\begin{equation*}
\hat{w}(x;c,d, S,T) = \exp\Bigl(\frac{1}{T}\int_S^{T+S}\ln{w((c,d)
\cdot t)(x)}\,dt\Bigr),
\end{equation*}
\begin{equation*}
\frac{1}{T} \int_0^T \kappa((c,d) \cdot (t+S)) \,dt =
\frac{1}{T}\int_S^{T+S}\kappa((c,d) \cdot t)dt,
\end{equation*}
and
\begin{equation*}
\frac{1}{T} \int_{0}^{T} c(t+S,x)\,dt = \frac{1}{T}
\int_S^{T+S}c(t,x)dt.
\end{equation*}

By Theorem \ref{principal-spectrum-thm3} there are $(S_n)$, $(T_n)$
with $T_n \to \infty$ such that
\begin{equation*}
\frac{1}{T_n} \int_{S_n}^{T_n+S_n}\kappa((c,d) \cdot t)\,dt =
\frac{\ln{\eta(T_n;c,d,S_n)}}{T_n} \to \lambdainf(c,d).
\end{equation*}
Without loss of generality we may assume that the limits\break
$\lim_{n\to\infty} \frac{1}{T_n} \int_{S_n}^{T_n+S_n}c(t,x) \,dt$ and
$\lim_{n\to\infty}\frac{1}{T_n}\int_{S_n}^{T_n+S_n}d(t,x)\,dt$ exist,
uniformly in $x \in \bar{D}$ (resp.\ in $x \in \p D$). Denote these
limits by $(\hat{c},\hat{d})$.

In the Dirichlet case, it is a consequence of
Theorems~\ref{pre-bounds-of-w-thm} and~\ref{Holder-continuity-thm}
that for each compact $D_0 \Subset D$ the sets
$\{\,\hat{w}(\cdot;c,d,S_n,T_n)|_{D_0}: n = 1, 2, \dots\,\}$,
$\{\,(\p \hat{w}/\p x_i)(\cdot;c,d$, $S_n$, $T_n)|_{D_0}: n = 1, 2,
\dots\,\}$ ($i = 1, \dots, N$) and $\{\,(\p^2 \hat{w}/\p x_{i}\p
x_{j})(\cdot;c,d,S_n,T_n)|_{D_0}: n = 1, 2, \dots\,\}$ ($i, j = 1,
\dots, N$) have compact closures in $C(D_0)$.

In the Neumann and Robin cases it is a consequence of
Theorems~\ref{pre-bounds-of-w-thm} and~\ref{Holder-continuity-thm}
that the sets $\{\,\hat{w}(\cdot;c,d,S_n,T_n): n = 1, 2, \dots\,\}$
and $\{\,(\p \hat{w}/\p x_i)(\cdot;c,d,S_n,T_n): n = 1, 2, \dots\,\}$
($i = 1, \dots, N$) have compact closures in $C(\bar{D})$, and that
for each compact $D_0 \Subset D$ the sets $\{\,(\p^2 \hat{w}/\p
x_{i}\p x_{j})(\cdot;c,d,S_n,T_n)|_{D_0}: n = 1, 2, \dots\,\}$ ($i, j
= 1, \dots, N$) have compact closures in $C(D_0)$.

We may thus assume that there is $w^{*} = w^*(x)$ such that
\begin{equation}
\label{limit1}
\lim_{n\to\infty} \hat{w}(x;c,d,S_n,T_n) = w^*(x)
\end{equation}
\begin{equation}
\label{limit2}
\lim_{n\to\infty} \frac{\p \hat{w}(x;c,d,S_n,T_n)}{\p x_i} =
\frac{\p w^*(x)}{\p x_i}
\end{equation}
\begin{equation}
\label{limit3}
\lim_{n\to\infty}\frac{\p^2 \hat{w}(x;c,d,S_n,T_n)}{\p x_i\p x_j} =
\frac{\p^2 w^*(x)}{\p x_i\p x_j}
\end{equation}
for $i, j = 1,2, \dots, N$ and $x \in D$.  In the Dirichlet boundary
conditions case, it follows from Theorems~\ref{pre-bounds-of-w-thm}
and~\ref{Holder-continuity-thm} that $w^{*}$ can be extended to a
function continuous on $\bar{D}$ by putting $w^{*}(x) = 0$ for $x \in
\p D$. Moreover, by Theorem \ref{pre-bounds-of-w-thm}, $w^*(x)
> 0$ for $x \in D$.

Regarding the uniformity of convergence, in the Dirichlet case, the
limit in \eqref{limit1} is uniform for $x$ in $\bar D$ and the limits
in \eqref{limit2} and \eqref{limit3} are uniform for $x$ in any
compact subset $D_0 \Subset D$, and in the Neumann and Robin cases,
the limits in \eqref{limit1} and \eqref{limit2} are uniform for $x\in
\bar D$ and the limit \eqref{limit3} is uniform for $x$ in any
compact subset $D_0 \Subset D$.

We claim that $\lambdainf(c,d) \ge \lambda(\hat{c},\hat{d})$. In
fact, by \eqref{nonauton-hat-w-eq5}--\eqref{limit3},
\begin{equation*}
\begin{cases}
\disp \sum_{i,j=1}^{N} a_{ij}(x) \frac{\p^2 {w}^*}{\p x_{i} \p
x_{j}} + \sum_{i=1}^{N} a_i(x) \frac{\p {w}^*}{\p x_i} + (\hat{c}(x)
- \lambda_{\rm min}(c,d)){w}^* \leq 0, & x \in D,
\\[2.5ex]
\hat{\mathcal{B}}{w}^* = 0, & x \in \p D,
\end{cases}
\end{equation*}
where
\begin{equation*}
\mathcal{\hat{B}}w^* :=
\begin{cases}
w^{*} \qquad  & \text{(Dirichlet)} \\[1.5ex]
\disp \sum_{i=1}^{N} b_{i}(x) \frac{\p w^*}{\p x_{i}}\qquad  &
\text{(Neumann)}
\\[1.5ex]
\disp \sum_{i=1}^{N} b_{i}(x) \frac{\p w^*}{\p x_{i}} + \hat{d}(x)
w^* \qquad & \text{(Robin)}.
\end{cases}
\end{equation*}
This implies that $w(t,x) = {w}^*(x)$ is a supersolution of
\begin{equation}
\label{nonauton-eq3}
\begin{cases}
w_t = \disp \sum_{i,j=1}^{N} a_{ij}(x) \frac{\p^2 {w}}{\p x_{i} \p
x_{j}} + \sum_{i=1}^{N} a_i(x) \frac{\p {w}}{\p
x_i}\\\qquad\qquad {} + (\hat{c}(x) - \lambdainf(c,d)){w},
\quad & x \in D,
\\[2.5ex]
\hat{\mathcal{B}}{w} = 0, \quad & x \in \p D.
\end{cases}
\end{equation}
Let $w(t,x;\hat{w})$ be the solution of \eqref{nonauton-eq3} with
initial condition $w(0,x;\hat{w}) = {w}^*(x)$. Then we have
\begin{equation}
\label{useful-eq1} w(t,x;\hat{w}) \le {w}^*(x)
\end{equation}
for $x \in D$ and $t \ge 0$.  Note that $\lambda(\hat{c},\hat{d}) -
\lambdainf(x,d)$ is the principal eigenvalue of
~\eqref{nonauton-or-random-avg} with $(\hat{c},\hat{d})$ being
replaced by $(\hat{c} - \lambdainf(c,d),\hat{d})$.  It then follows
from \eqref{useful-eq1} together with the positivity of ${w}^*(x)$
that
\begin{equation}
\label{useful-eq2}
\lambda(\hat{c},\hat{d}) - \lambdainf(c,d) \le 0.
\end{equation}
This implies that
\begin{equation*}
\lambda(\hat{c},\hat{d}) \le \lambdainf(c,d).
\end{equation*}

Next, we prove $\lambdasup(c,d) \ge \lambda(\hat{c},\hat{d})$ for any
$(\hat{c},\hat{d}) \in \hat{Y}(c,d)$.  For any $(\hat{c},\hat{d}) \in
\hat{Y}(c,d)$ there are $(S_n), (T_n)$ with $T_n \to \infty$ such
that
\begin{equation*}
\frac{1}{T_n} \int_{S_n}^{T_n+S_n}c(t,x)\,dt \to \hat{c}(x)
\end{equation*}
and
\begin{equation*}
\frac{1}{T_n} \int_{S_n}^{T_n+S_n} d(t,x)\,dt \to \hat{d}(x),
\end{equation*}
uniformly in $x \in \bar{D}$ (resp.\ uniformly in $x \in \p D$).
Without loss of generality, assume that
\begin{equation*}
\frac{1}{T_n} \int_{S_n}^{T_n+S_n} \kappa((c,d) \cdot t)\,dt \to
\lambda_0.
\end{equation*}
By arguments similar to the above, $\lambda_0 \ge
\lambda(\hat{c},\hat{d})$. Note that $\lambdasup(c,d) \ge \lambda_0$.
Then we have $\lambdasup(c,d) \ge \lambda(\hat{c},\hat{d})$.

(2)  By Lemma~\ref{averaging-uniform}, there is $\Omega_1 \subset
\Omega$ with $\PP(\Omega_1) = 1$ such that
\begin{equation*}
\hat{c}(x) = \lim_{T\to\infty}\frac{1}{T}\int_0^{T}
c(\theta_t\omega,x)\,dt
\end{equation*}
for any $\omega \in \Omega_1$ and any $x \in \bar{D}$, uniformly in
$\bar{D}$, and
\begin{equation*}
\hat{d}(x) = \lim_{T\to\infty}\frac{1}{T}\int_0^{T}
d(\theta_{t}\omega,x)\,dt
\end{equation*}
for any $\omega \in \Omega_1$ and any $x \in \p D$, uniformly in $\p
D$.

By Theorem \ref{principal-exponent-thm}, there is $\Omega_2 \subset
\Omega$ with $\PP(\Omega_2)=1$ such that
\begin{equation*}
\lambda = \lim\limits_{T\to\infty}
\frac{\ln\norm{U_\omega(T,0)w(\omega)}}{T}
\end{equation*}
for any $\omega \in \Omega_2$.

Take an $\omega \in \Omega_1 \cap \Omega_2$.  Then for any $T_n \to
\infty$,
\begin{equation*}
\frac{1}{T_n}\int_0^{T_n} c(\theta_t\omega,x)\,dt =
\frac{1}{T_n}\int_0^{T_n}c^\omega(t,x)dt \to \hat{c}(x) \quad \text{
uniformly for } x \in \bar{D},
\end{equation*}
\begin{equation*}
\frac{1}{T_n}\int_0^{T_n} d(\theta_{t}\omega,x)\,dt =
\frac{1}{T_n}\int_0^{T_n}d^\omega(t,x)dt\to \hat{d}(x)\quad
\text{uniformly for } x \in \p D,
\end{equation*}
and
\begin{equation*}
\frac{\ln\eta(T_n;c^{\omega},d^{\omega},0)}{T_n} \to \lambda.
\end{equation*}
By arguments as in the proof of Part (1), we must have $\lambda \ge
\hat{\lambda}$.
\end{proof}

\begin{proof}[Proof of Theorem \ref{ch5-smoothlb-thm2}]
We first prove (2) for the reason that (2) will be used in the proof
of (1).

First,  suppose that $c(\omega,x) = c_{1}(x) +
c_{2}(\theta_{t}\omega)$ for any $x \in \bar{D}$, any $t \in \RR$ and
any $\omega \in \Omega^*$. Without loss of generality, we may assume
$\int_{\Omega} c_{2}(\omega) \,d\PP(\omega) = 0$ and $\PP(\Omega^*) =
1$ (for otherwise, we change $c_1(x)$ to
$c_1(x)+\int_{\Omega}c_2(\omega)\,d\PP(\omega)$ and change
$c_2(\omega)$ to $c_2(\omega) - \int_\Omega
c_2(\omega)\,d\PP(\omega)$).  Suppose also that
$d(\theta_{t}\omega,x) = d(x)$.  One has $\hat{c}(x) = c_{1}(x)$ for
$x \in \bar{D}$, and $\hat{d}(x) = d(x)$ for $x \in \p D$.  Let
$u(x)$ be the positive principal eigenfunction of
\eqref{nonauton-or-random-avg} normalized so that its $L_2(D)$-norm
equals $1$, and let
\begin{equation*}
v(t,x;\omega) := u(x)\exp{\Bigl(\hat{\lambda} t + \int_0^t
c_{2}(\theta_s\omega)\,ds\Bigr)}
\end{equation*}
for $t \in \RR$, $x \in \bar{D}$ and $\omega \in \Omega^*$.  It is
then not difficult to see that for any $\omega \in \Omega^{*}$ the
function $[\,\RR \ni t \mapsto v(t,\cdot;\omega) \in L_2(D)\,]$ is
the (necessarily unique) normalized globally positive solution
of~\eqref{random-eq1}.  For a.e.~$\omega \in \Omega$, $\lambda =
\lim_{t\to\infty}(1/t)\ln{\norm{v(t,\cdot;\omega)}}$.  It follows
with the help of Birkhoff's Ergodic Theorem
(Lemma~\ref{ch5-pre-ergodic-lm}) that the last term equals
$\hat{\lambda}$ for a.e.~$\omega \in \Omega^{*}$.  Consequently,
$\lambda = \hat{\lambda}$.

Conversely, let $\Omega_1$ and $\Omega_2$ be as in the proof of
Theorem \ref{ch5-smoothlb-thm1}(2).  We write $\eta(t;\omega)$ for
$\eta(t;c^{\omega},d^{\omega},0)$ and $\hat{w}(x;\omega,T)$ for
$\hat{w}(x;c^{\omega},d^{\omega},0,T)$, respectively.  Then
\begin{equation*}
\eta(t;\omega) = \norm{U_{\omega}(t,0)w(\omega)}
\end{equation*}
and
\begin{equation*}
\hat{w}(x;\omega,T) = \exp\Bigl(\frac{1}{T}\int_0^T \ln
w(\theta_t\omega)(x) \,dt \Bigr).
\end{equation*}

Let
\begin{equation*}
\phi(x) := \exp{\int\limits_{\Omega}\ln{w(\omega)(x)}\,d\PP(\omega)}
\quad\text{for}\quad x \in \bar{D}
\end{equation*}
in the case of Neumann or Robin boundary condition, and
\begin{equation*}
\phi(x) :=
\begin{cases}
\disp \exp{\int\limits_{\Omega}\ln{w(\omega)(x)}
\,d\PP(\omega)} \quad & \text{for}\quad x \in D
\\
0 \quad & \text{for}\quad x \in \p D
\end{cases}
\end{equation*}
in the case of  Dirichlet boundary condition.  By
Lemma~\ref{averaging-uniform}, there is $\Omega_3 \subset \Omega$
with $\PP(\Omega_3) = 1$ such that
\begin{equation}
\label{random-phi-eq1} \phi(x) = \lim_{T\to \infty}
\exp\Bigl(\frac{1}{T} \int_0^{T} \ln{w(\theta_{t}\omega)(x)}\,dt
\Bigr) = \lim_{T\to\infty} \hat{w}(x;\omega,T)
\end{equation}
for any $\omega \in \Omega_3$ and $x \in D$.  Clearly, $\phi(x)
> 0$ for $x \in D$.

Observe that by Theorems \ref{pre-bounds-of-w-thm} and
\ref{Holder-continuity-thm}, $\disp \frac{\p w(\omega)(x)}{\p x_i}$
($i = 1,2, \dots, N$) ($\disp\frac{\p ^2 w(\omega)(x)}{\p x_i\p
x_j}$, $i,j = 1,2,\dots,N$) are locally H\"older continuous in $x \in
\bar{D}$ ($x \in D$) uniformly in $\omega \in \Omega$, and for a
fixed $x \in \bar{D}$ ($x \in D$) they are bounded in $\omega \in
\Omega$.  Hence, Lemma~\ref{averaging-derivative} together with
Eqs.~\eqref{nonauton-hat-w-eq1} and \eqref{nonauton-hat-w-eq2} gives
us the existence of $\Omega_4 \subset \Omega$ with $\PP(\Omega_4) =
1$ such that
\begin{align}
\label{random-phi-eq2}
\frac{\p\phi}{\p x_i}(x)&
= \lim_{T\to\infty}\frac{\p \hat{w}(x;\omega,T)}{\p x_i} \nonumber \\
& = \phi(x)\int_\Omega \Bigl( \frac{1}{w(\cdot)(x)}\frac{\p
w(\cdot)(x)}{\p x_i}\Bigr)\,d\PP(\cdot),
\end{align}
\begin{align}
\label{random-phi-eq3}
\frac{\p^2\phi}{\p x_i\p x_j}&
= \lim_{T\to\infty}\frac{\p^2 \hat{w}(x;\omega,T)}{\p x_i\p x_j} \nonumber \\
& = \phi(x)\int_\Omega \Bigl(\frac{1}{w(\cdot)(x)} \frac{\p
w(\cdot)(x)}{\p x_i}\Bigr)\,d\PP(\cdot)
\int_\Omega\Bigl(\frac{1}{w(\cdot)(x)} \frac{\p w(\cdot)(x)}{\p x_j}
\Bigr)\,d\PP(\cdot)
\nonumber\\
&\quad + \phi(x) \int_\Omega\Bigl(\frac{1}{w(\cdot)(x)} \frac{\p ^2
w(\cdot)(x)}{\p x_i\p x_j} - \frac{1}{w^2(\cdot)(x)} \frac{\p
w(\cdot)(x)} {\p x_i} \frac{\p w(\cdot)(x)}{\p
x_j}\Bigr)\,d\PP(\cdot)
\end{align}
and
\begin{equation*}
\lim_{T\to\infty}\frac{1}{T}\int_0^T
\frac{1}{w(\theta_t\omega)(x)}\frac{\p w(\theta_t\omega)(x)}{\p
x_i}\,dt = \int_\Omega\frac{1}{w(\omega)(x)}\frac{\p
w(\omega)(x)}{\p x_i} \,d\PP(\cdot),
\end{equation*}
\begin{align*}
&\lim_{T\to\infty}\frac{1}{T}\int_0^T\frac{1}{w^2(\theta_t\omega)(x)}
\frac{\p w(\theta_t(\omega)(x)}{\p x_i} \frac{\p
w(\theta_t\omega)(x)}{\p x_j}\,dt \\
=& \int_{\Omega}\frac{1}{w(\omega)(x)}\frac{\p w(\omega)(x)}{\p
x_i}\frac{\p w(\omega)(x)}{\p x_j}\, d\PP(\cdot)
\end{align*}
for $\omega \in \Omega_4$, $x \in D$, and
\begin{equation*}
\mathcal{\hat{B}} \phi = 0
\end{equation*}
for $\omega \in \Omega_4$, $x \in \p D$, where
\begin{equation}
\label{random-phi-eq4}
\mathcal{\hat{B}}\phi :=
\begin{cases}
\phi & \text{(Dirichlet)} \\[1.5ex]
\disp \sum_{i=1}^{N} b_{i}(x) \frac{\p \phi}{\p x_{i}} &
\text{(Neumann)}
\\[1.5ex]
\disp \sum_{i=1}^{N} b_{i}(x) \frac{\p \phi}{\p x_{i}} +
\hat{d}(x)\phi & \text{(Robin)}.
\end{cases}
\end{equation}

Let $\Omega_0 := \Omega_1 \cap \Omega_2 \cap \Omega_3 \cap \Omega_4$.
Then \eqref{random-phi-eq1}--\eqref{random-phi-eq4} hold for any
$\omega \in \Omega_0$.

Put $\bar{v}(t,x;\omega) := w(\theta_t\omega)(x)$.  Since, by
Theorem~\ref{pre-bounds-of-w-thm}, for a fixed $x \in D$ there are $0
< m < M$ such that  $m \le w(\omega)(x) \le M$ for any $\omega \in
\Omega$, we have
\begin{equation}
\label{random-bar-v-eq3}
\lim\limits_{T\to\infty} \frac{1}{T} \int\limits_{0}^{T}
\frac{1}{\bar{v}} \frac{\p \bar{v}}{\p t}(t,x;\omega)\,ds =
\lim\limits_{T\to\infty} \frac{1}{T} (\ln{w(\theta_{T}\omega)(x)} -
\ln{w(\omega)(x)}) = 0
\end{equation}
for any $\omega \in \Omega_0$ and $x \in D$.

Consequently, by \eqref{nonauton-hat-w-eq3} and
\eqref{nonauton-hat-w-eq4} we have
\begin{align}
\label{ch5-smooth-phi-eq7} &\sum_{i,j=1}^N  a_{ij}(x)
\frac{\p^{2}\phi}{\p x_{i} \p x_{j}} + \sum_{i=1}^{N} a_{i}(x)
\frac{\p \phi}{\p x_i}
\nonumber \\
 =& (\lambda - \hat{c}(x))\phi\nonumber\\
&  + \phi\sum_{i,j=1}^N a_{ij}(x) \int_\Omega
\Bigl(\frac{1}{w(\cdot)} \frac{\p w(\cdot)}{\p
x_i}\Bigr)\,d\PP(\cdot) \int_\Omega \Bigl(\frac
{1}{w(\cdot)}\frac{\p w(\cdot)}{\p x_j}\Bigr) \,d\PP(\cdot)
\nonumber\\
&  - \phi \sum_{i,j=1}^N a_{ij}(x)
\int_\Omega\Bigl(\frac{1}{w^2(\cdot)} \frac{\p w(\cdot)}{\p x_i}
\frac{\p w(\cdot)}{\p x_j}\Bigr)\,d\PP(\cdot)
\end{align}
for $x \in D$, and
\begin{equation*}
\hat{\mathcal{B}}\phi = 0 \quad \text{for} \quad x \in \p D.
\end{equation*}

Suppose that $\lambda = \hat{\lambda}$. Consider
\begin{equation}
\label{additional-eq}
\begin{cases}
\disp u_t = \sum_{i,j=1}^N a_{ij}(x)\frac{\p^2 u}{\p x_i\p x_j} +
\sum_{i=1}^Na_i(x) \frac{\p
u}{\p x_i} + (\hat{c}(x) - \lambda)u,\quad x\in D, \\
\\
\hat{\mathcal{B}}u = 0.
\end{cases}
\end{equation}

\noindent We have that $0$ is the principal eigenvalue of
\eqref{additional-eq}.  Let $\hat{\phi}$ be a positive principal
eigenfunction of \eqref{additional-eq}.  Let $u(t,x;\phi)$ be the
solution of \eqref{additional-eq}  with initial condition
$u(0,x;\phi) = \phi(x)$.  By Lemma~\ref{ch5-pre-holder-lm}(2),
\begin{align*}
&\quad\sum_{i,j=1}^N a_{ij}(x) \int_\Omega \Bigl(\frac{1}{w(\cdot)}
\frac{\p w(\cdot)}{\p x_i}\Bigr)\,d\PP(\cdot) \int_\Omega
\Bigl(\frac {1}{w(\cdot)}\frac{\p w(\cdot)}{\p x_j}\Bigr)
\,d\PP(\cdot)\\
& - \sum_{i,j=1}^N a_{ij}(x) \int_\Omega\Bigl(\frac{1}{w^2(\cdot)}
\frac{\p w(\cdot)}{\p x_i} \frac{\p w(\cdot)}{\p
x_j}\Bigr)\,d\PP(\cdot) \le 0
\end{align*}
for all $x \in D$.  This together with \eqref{ch5-smooth-phi-eq7}
implies that $\phi(x)$ is a supersolution of \eqref{additional-eq}
and hence
\begin{equation}
\label{additional-eq1}
u(t,x;\phi) \le \phi(x) \quad \text{ for } \quad x \in D, \quad t
\ge 0.
\end{equation}
We apply now Theorem~\ref{globally-positive-existence} to the
autonomous problem~\eqref{additional-eq}.  In this case, $Y$ is a
singleton, $w = \hat{\phi}$, and $w^{*}(x) > 0$ for a.e.~$x \in D$.
It follows then that $\langle \phi, w^{*} \rangle > 0$ and $\langle
\hat{\phi}, w^{*} \rangle > 0$.  By taking $\alpha := \langle \phi,
w^{*} \rangle / \langle \hat{\phi}, w^{*} \rangle$ ($ > 0$) we see
that
\begin{equation*}
\phi = \alpha\hat{\phi} + \hat{\psi},
\end{equation*}
where $\hat \psi \in X$ is such that $\langle \hat \psi, w^{*}
\rangle = 0$. Note that $u(t,x;\phi) = \alpha \hat{\phi}(x) +
u(t,x;\hat{\psi})$, where $u(t,x;\hat \psi)$ is the solution of
\eqref{additional-eq} with $u(0,x;\hat{\psi}) = \hat{\psi}(x)$.
Theorem~\ref{globally-positive-existence}(iii) gives that
$\norm{u(t,\cdot;\hat{\psi})} \to 0$ as $t \to \infty$.  It then
follows from \eqref{additional-eq1} that
\begin{equation*}
\alpha \hat{\phi}(x) \le \phi(x)\quad \text{ for }\quad x \in D,
\end{equation*}
and then
\begin{equation*}
\hat{\psi}(x) \ge 0 \quad \text{ for } \quad x \in D.
\end{equation*}
This implies that
\begin{equation*}
\hat{\psi}(x) = 0 \quad \text{ for } \quad x \in D,
\end{equation*}
hence
\begin{equation*}
\alpha \hat{\phi}(x) = \phi(x) \text{ for } \quad x \in D.
\end{equation*}
Therefore we must have
\begin{align*}
&\sum_{i,j=1}^N a_{ij}(x) \int_{\Omega} \Bigl(
\frac{1}{w(\cdot)}\frac{\p w(\cdot)}{\p x_i} \Bigr)\,d\PP(\cdot)
\int_{\Omega} \Bigl( \frac{1}{w(\cdot)}\frac{\p w(\cdot)}{\p x_j}
\Bigr) \,d\PP(\cdot) \\
= &\sum_{i,j=1}^N a_{ij}(x) \int_{\Omega} \Bigl(
\frac{1}{w^2(\cdot)}\frac{\p w(\cdot)}{\p x_i} \frac{\p w(\cdot)}{\p
x_j}\Bigr)\,d\PP(\cdot)
\end{align*}
for all $x \in D$.

Let $\{\,x^{(n)}: n \in \NN\,\}$ be a countable dense subset of $D$.
By Lemma \ref{ch5-pre-holder-lm}(2), for each $n \in \NN$ there is
$\Omega^{(n)}$ with $\PP(\Omega^{(n)}) = 1$ such that $\disp
\frac{1}{w(\omega)(x^{(n)})}\frac{\p w(\omega)(x^{(n)}) }{\p x_i}$ is
independent of~$\omega \in \Omega^{(n)}$.  Consequently, from the
continuity of $\disp \frac{1}{w(\omega)(x)}\frac{\p w(\omega)(x) }{\p
x_i}$, for a fixed $\omega \in \Omega$, in $x \in D$, there are
$\Omega_5 \subset \Omega_0$, $\Omega_5 := \Omega_0 \cap
\bigcap_{n=1}^{\infty}\Omega^{(n)}$, with $\PP(\Omega_5) = 1$, and
functions $f_i(x)$ such that
\begin{equation*}
\frac{1}{w(\omega)(x)} \frac{\p w(\omega)(x) }{\p x_i} = f_i(x)
\end{equation*}
for $i = 1,2,\dots,N$, $\omega \in \Omega_5$ and $x \in D$.  Hence
\begin{equation*}
\nabla{\ln{w}}(\omega)(x) = (f_1(x),f_2(x),\dots,f_N(x))^\top
\end{equation*}
for $\omega \in \Omega_5$ and $x \in D$.  This implies that
$w(\omega)(x) = F(x) G(\omega)$ for some continuous $F(x) > 0$,
measurable $G(\omega) > 0$ and any $\omega \in \Omega_5$, $x \in D$.
Let $\Omega^{*} := \bigcap_{r\in\QQ}\theta_r\Omega_5$, where $\QQ$ is
the set of all rational numbers.  Clearly, $\PP(\Omega^{*}) = 1$ and
$w(\theta_t\omega)(x) = F(x)G(\theta_t\omega)$ for $t \in \QQ$,
$\omega \in \Omega^{*}$ and $x \in D$.  The continuity of
$w(\theta_t\omega)(x)$ in $t \in \RR$ then implies that the function
$[\, \RR \ni t \mapsto w(\theta_t\omega)(x)/F(x) \in \RR]$ is
continuous.  Hence, for each $\omega \in \Omega^{*}$ and each $t \in
\RR$ we can safely write $G(\theta_t\omega)$ for
$w(\theta_t\omega)(x)/F(x)$.  Therefore, by
\eqref{nonauton-hat-v-eq1},
\begin{align}
&\label{auxiliary}
F(x)\frac{dG(\theta_t\omega)}{dt} \nonumber\\
=\null & \Bigl( \sum_{i,j=1}^N a_{ij}(x) \frac{\p^2 F}{\p x_i \p x_j} +
\sum_{i=1}^N a_i(x) \frac{\p F}{\p x_i} + c(\theta_t\omega,x)F -
\kappa(\theta_t\omega)F\Bigr) G(\theta_t\omega)
\end{align}
for $t \in \RR$, $\omega \in \Omega^{*}$ and $x \in D$, and
\begin{equation*}
\mathcal{B}(\theta_t\omega) F = 0
\end{equation*}
for $t \in \RR$, $\omega \in \Omega^{*}$ and $x \in \p D$.  By
dividing both sides of~\eqref{auxiliary} by $F(x) G(\theta_t\omega)$
we obtain
\begin{equation*}
c(\theta_t\omega,x)  = \frac{\frac{dG(\theta_t\omega)}{dt}}
{G(\theta_{t}\omega)} + \kappa(\theta_{t}\omega) -
\frac{\sum_{i,j=1}^N a_{ij}(x) \frac{\p^2 F}{\p x_i \p x_j}(x) +
\sum_{i=1}^N a_i(x) \frac{\p F}{\p x_i}(x)}{F(x)}
\end{equation*}
for $t \in \RR$, $\omega \in \Omega^{*}$ and $x \in \p D$.  We can
write $c(\theta_{t}\omega,x) = c_{1}(x) + c_{2}(\theta_{t}\omega)$
for some integrable $c_{2}(\omega)$ with $\int_{\Omega}
c_{2}(\cdot)\,d\PP(\cdot) = 0$, any $x \in D$, $t \in \RR$ and
$\omega \in \Omega^{*}$.  Similarly, by taking the boundary condition
$\mathcal{B}(\theta_t\omega) F = 0$ we obtain that
\begin{equation*}
d(\theta_{t}\omega,x) = - \frac{\sum_{i=1}^{N}b_i(x)\frac{\p F}{\p
x_i}(x)}{F(x)}
\end{equation*}
for $t \in \RR$, $\omega \in \Omega^{*}$ and $x \in \p D$, that is,
$d(\theta_{t}\omega,x) = d(x)$ for each $x \in \p D$, each $t \in
\RR$ and each $\omega \in \Omega^{*}$.

(1) Let $\PP$ be the unique ergodic measure on $Y(c,d)$.  By
Lemma~\ref{ch5-pre-existence-avg-lm},
\begin{equation*}
\hat{c}(x) := \lim_{t\to\infty}\frac{1}{t}\int_{0}^{t}c(s,x)\,ds
\end{equation*}
exists for $x \in D$, and
\begin{equation*}
\hat{d}(x) := \lim_{t\to\infty}\frac{1}{t}\int_{0}^{t}d(s,x)\,ds
\end{equation*}
exists for $x \in \p D$.

Assume that the equality $\lambdainf(c,d) = \lambda(\hat{c},\hat{d})$
holds.  By Theorem \ref{ch5-smoothlb-thm2}(2), there is $Y_0(c,d)
\subset Y(c,d)$ with $\PP(Y_0(c,d)) = 1$ such that for any
$(\tilde{c},\tilde{d}) \in Y_0(c,d)$, $\tilde{c}(t,x) = c_{1}(x) +
\check{c}_2((\tilde{c},\tilde{d}) \cdot t)$ for some $\PP$-integrable
$\check{c}_2$ with $\int_{Y(c,d)}\check{c}_2(\tilde{c},\tilde{d})
\,d\PP(\tilde{c},\tilde{d}) = 0$, any $t \in \RR$, $x \in D$, and
$\tilde{d}(t,x) = d(x)$ for $x \in \p D$. Take a
$(\tilde{c},\tilde{d}) \in Y_0(c,d)$.  Since $Y(c,d)$ is minimal,
there is a sequence $(s_n)$ such that $(\tilde{c},\tilde{d}) \cdot
s_n$ converges in $Y(c,d)$ to $(c,d)$ as $n \to \infty$.  This
implies that $c(t,x) = c_{1}(x) + c_{2}(t)$ and $d(t,x) = d(x)$.  By
unique ergodicity, $\lim_{t\to\infty} \frac{1}{t}\int_{0}^{t}
c_{2}(s)\,ds = \int_{Y(c,d)}\check{c}_2(\tilde{c},\tilde{d})
\,d\PP(\tilde{c},\tilde{d}) = 0$.

If $c(t,x) = c_{1}(x)+ c_{2}(t)$ and $d(t,x) = d(x)$,  then the
equality $\lambdainf(c,d) = \bar{\lambda}$ follows clearly.
\end{proof}

\end{document}